\title{Principal Pairs of Quantum Homogeneous Spaces}
\newcounter{stepcounter}
\newtheoremstyle{smallcaps}
 {3pt} 
 {3pt} 
 {\itshape} 
 {} 
 {\sc} 
 {.} 
 {.5em} 
 {} 
\newtheoremstyle{smallcapsdef}
 {3pt} 
 {3pt} 
 {} 
 {} 
 {\sc} 
 {.} 
 {.5em} 
 {} 
\theoremstyle{plain}
\newtheorem{thm}{Theorem}[section]
\newtheorem{lem}[thm]{Lemma}
\newtheorem{prop}[thm]{Proposition}
\newtheorem{cor}[thm]{Corollary}
\theoremstyle{definition}
\newtheorem{eg}[thm]{Example}
\newtheorem{defn}[thm]{Definition}
\newtheorem{remark}[thm]{Remark}
\date{}
\newcommand\bit{\begin{itemize}}
\newcommand\eit{\end{itemize}}
\newcommand\bet{\begin{enumerate}}
\newcommand\eet{\end{enumerate}}
\newcommand\ed{\end{document}}
\DeclareFontFamily{U}{mathx}{\hyphenchar\font45}
\DeclareFontShape{U}{mathx}{m}{n}{
 <5> <6> <7> <8> <9> <10>
 <10.95> <12> <14.4> <17.28> <20.74> <24.88>
 mathx10
 }{}
\DeclareSymbolFont{mathx}{U}{mathx}{m}{n}
\DeclareMathAccent{\widecheck}{0}{mathx}{"71}
\DeclareMathAccent{\wideparen}{0}{mathx}{"75}
\renewcommand{\a}{\alpha}
\renewcommand{\d}{\delta}
\newcommand{\e}{\varepsilon}
\newcommand\DEL{\Delta}
\newcommand\bC{{\mathbb C}}
\newcommand\bR{{\mathbb R}}
\newcommand\F{{\mathcal F}}
\newcommand\N{{\mathcal N}}
\renewcommand{\O}{\mathcal{O}}
\newcommand\can{\mathrm{can}}
\newcommand\co{\mathrm{co}}
\newcommand\unit{\mathrm{U}}
\newcommand\counit{\mathrm{C}}
\newcommand\id{\mathrm{id}}
\newcommand\proj{\mathrm{proj}}
\newcommand\inv{^{-1}}
\newcommand\oby{\otimes}
\newcommand\sseq{\subseteq}
\def\qbinom#1#2{\ensuremath{\left[\kern-.3em\left[\genfrac{}{}{0pt}{}{#1}{#2}\right]\kern-.3em\right]_q}}
\newcommand\mto{\mapsto}
\newcommand\algn{algebra}
\newcommand\EE{{\mathcal E}}
\newcommand{\OO}{\mathcal{O}}
\newcommand{\qphs}{\OO_q(G/L^{\,\mathrm{s}}_S)}
\newcommand{\uslevi}{U_q(\frak{l}_S)}
\newcommand{\sslevi}{\OO_q(L^{\,\mathrm{s}}_S)}
\newcommand{\usslevi}{U_q(\frak{l}^{\,\mathrm{s}}_S)}
\author[A. Carotenuto]{Alessandro Carotenuto}
\address{Institute of Mathematics, Czech Academy of Sciences, \v{Z}itn\'a 25, 115 67 Prague, Czech Republic}
\email{carotenuto@math.cas.cz, acaroten91@gmail.com}
\author[R. \'O Buachalla]{R\'eamonn \'O Buachalla}
\address{Mathematical Institute of Charles University, Sokolovsk\'a 83, Prague, Czech Republic} \email{obuachalla@karlin.mff.cuni.cz}
\thanks{AC is supported by the GA\v{C}R project 20-17488Y and \mbox{RVO: 67985840}. R\'OB was supported by GA\v{C}R through the framework of the grant GA19-06357S, and is supported by the Charles University PRIMUS grant \emph{Spectral Noncommutative Geometry of Quantum Flag Manifolds} PRIMUS/21/SCI/026}
\begin{document}

\begin{abstract}
We propose a simple but effective framework for producing examples
of covariant faithfully flat (generalised) Hopf--Galois extensions from a nested pair of quantum homogeneous spaces. Our construction is modelled on the classical situation of a homogeneous fibration $G/N \to G/M$, for $G$ a group, and $N \subseteq M \subseteq G$ subgroups. Variations on Takeuchi's equivalence and Schneider's descent theorem are presented in this context. Quantum flag manifolds and their associated quantum Poisson homogeneous spaces are taken as motivating examples. Moreover, a large collection of \emph{noncommutative fibrations} (in the spirit of Brzezi\'{n}ski and Szyma\'{n}ski) are constructed. 
\end{abstract}

\maketitle 

\tableofcontents

\section{Introduction}

From their purely algebraic origins \cite{ChSw69,KrTa81}, Hopf--Galois extensions have become a central structure in noncommutative geometry where they are rightly considered as noncommutative analogues of principal bundles \cite{BeggsMajid:Leabh}. 
More recently, efforts have begun to extend this work to a theory of noncommutative fiber bundles. This has focused on the study of noncommutative sphere bundles from both a Hopf algebraic \cite{TBWS} and a $C^*$-algebraic point of view \cite{ArKa20}. In this paper we propose a simple but effective new framework for producing examples of noncommutative fibrations, both principal and non-principal, from a nested pair of quantum homogeneous spaces. Our construction generalises the prototypical situation of a homogeneous fibration, probably the simplest type of fibration after a homogeneous space: let $G$ be a group, then we have a fibration
\begin{align*}
G/N \twoheadrightarrow G/M, & & \textrm{ for any two subgroups } N \subseteq M \subseteq G.
\end{align*}
This fibration is principal if and only if $N$ is a normal subgroup of $M$, see for example \cite{OVLie}. We observe that this construction extends directly to the Hopf algebra setting, providing a large and natural family of principal comodule algebras. In more detail, we start with a nested pair of quantum homogeneous spaces $B \subseteq P$ (coideal subalgebras satisfying a faithful flatness condition) and show that $P$ is a principal $\pi_B(P)$-comodule algebra with base $B$, where $\pi_B(P)$ is a Hopf algebra of coinvariants playing the role of the quotient group $M/N$. Moreover, we have direct generalisations of Takeuchi's and Schneider's categorical equivalences to this setting. We call such a pair of quantum homogeneous spaces $(B,P)$ a \emph{principal pair}.


The space of coinvariants $\pi_B(P)$ is of course not guaranteed to have a coalgebra structure, just as $N$ is not guaranteed to be normal in $M$. However, even then the pair $B \subseteq P$ retains many attractive algebraic properties: $P$ is a faithfully flat extension of $B$ and the extension satisfies a generalised Hopf--Galois condition. The generalised version of Takeuchi's equivalence still holds and $P$ can be described as an $(A,B)$-relative module associated to the fiber $\pi_B(P)$. This list of properties places these examples firmly within the domain of Brzezi\'{n}ski and Szyma\'{n}ski's putative theory of \emph{noncommutative fiber bundles}, providing a rich family of examples deserving to be considered as \emph{noncommutative fibrations}.


Of particular concern to this paper is the construction of principle comodule $\OO(\mathbb{T}^k)$-algebras, where $\mathbb{T}^k$ is the $k$-torus. We are motivated by the classical situation of a principal torus fibration $G/L^{\mathrm{s}} \to G/L$, where $G$ is a compact Lie group, $L$ is a reductive subgroup, and $L^{\mathrm{s}}$ is the semisimple part of $L$. We start with a quantum homogeneous space $B = A^{\co(H)}$, where we assume that $H$ is cosemisimple and all of its grouplike elements are central. We then consider $K_H$ the Hopf algebra quotient of $H$ by the augmentation ideal of $\Gamma_H$, the group Hopf algebra of the group of grouplike elements of $H$. The pair of quantum homogeneous spaces $B$ and $P$ associated to the projections $\pi_H:A \to H$ and $\pi_A: A \to K_H$ is then shown to be a principal pair, which we call a \emph{group algebra principal pair}. We also give a formal treatment of relative line modules in terms of an algebra grading over the group of grouplike elements of $H$. 


In the Hopf algebraic approach to noncommutative geometry, Majid and Brzezi\'{n}ski's quantum principal bundles and principal connections play a central role, see for example the recent monograph \cite{BeggsMajid:Leabh}. An alternative definition of a principal pair was introduced by Mrozinski and the authors in \cite{BWGrass} as a formal framework in which to construct quantum principal bundles and quantum principal connections over the quantum Grassmannians. The fact that any nested pair of quantum homogeneous spaces gives a principal comodule algebra easily implies the equivalence of the two definitions. This gives support to the framework introduced in \cite{BWGrass} while offering a significant simplification of the setup, providing greater clarify, and elucidating the relationship with the classical situation. 
%
%
%


One of our main motivations for introducing group algebra principal pairs is to provide a firmly Hopf algebraic framework for studying relative line modules and principal connections over quantum flag manifolds $\OO_q(G/L_S)$. 
Taking the quantum Poisson homogeneous space $\OO_q(G/L^{\mathrm{s}}_S)$ as the total space, the homogeneous spaces $\OO_q(G/L^{\mathrm{s}}_S)$ and $\OO_q(G/L_S)$ are easily shown to give a group algebra principal pair. This connects the quantum flag manifolds to the powerful machinery of principal comodule algebras and quantum principal bundles. Concretely, we get a graded algebra structure on the quantum Poisson homogeneous space whose homogeneous summands are relative line modules, and every relative line module is of this form. Moreover, we get a very useful description of the line modules in terms of the standard generators of $\OO_q(G/L_S^{\mathrm{s}})$ (as first considered by Stokman \cite{Stok}). 


The principal pair presentation of the quantum flag manifolds leads to a number of important applications in accompanying papers. The first is the description of the Heckenberger--Kolb calculi \cite{HK,HKdR} in quantum principal bundle terms \cite{CDOBBW}, generalising the case of quantum projective space given in \cite{MMF1}, and the more general quantum Grassmannian case given in \cite{BWGrass}. As for the quantum Grassmannian case, this provides a framework in which the Borel--Weil theorem for the irreducible flag manifolds can be $q$-deformed \cite{CDOBBW}. The Borel--Weil theorem has itself a number of important consequences. It allows us to identify which line modules over the irreducible quantum flag manifolds are positive and which are negative \cite{DOKSS}. In turn, in \cite{OSV} the negative line modules are used to twist the Dolbeault--Dirac operators of the irreducible quantum flag manifolds so as to produce Fredholm operators. In forthcoming work, the principal comodule algebra presentation of $\OO_q(G/L_S^{\mathrm{s}})$ will be used to present the $C^*$-completion of $\OO_q(G/L_S^{\mathrm{s}})$ as a Cuntz--Pimsner algebra, generalising the case of quantum projective space treated in \cite{ArBrLa15}. 

\subsection{Summary of the Paper}

The paper is organised as follows. In Section 2 we recall the standard definitions and results about principal comodule algebras, Schneider's descent theorem, and Takeuchi's equivalence for relative Hopf modules.

In Section 3, we prove a faithful flatness condition, as well as a generalised Hopf--Galois condition, for a nested pair of quantum homogeneous spaces. Building on this, the notion of a principal is then introduced. Principal pairs are shown to give principal comodule algebras and hence we regain the earlier formulation of a principal pair introduced by the authors and Mrozinski. Along the way we establish a generalisation Takeuchi's equivalence for nested pairs of quantum homogeneous spaces,  as well as a refinement of Schneider's equivalence taking into account the extra symmetry given by a principal pair.

In Section 4 we introduce the definition of a group algebra principal pair. More explicitly, we prove the following theorem. 
\begin{thm}
Let $A$ and $H$ be Hopf algebras, $\pi_H:A \to H$ a surjective Hopf algebra map, and $\proj_K:H \to K_H$ the canonical projection. If $A$ is cosemisimple and all the grouplike elements of $H$ are central in $H$, then the pair
$$
\left(\pi_H:A \to H, \, \proj_K \circ \pi_{H}: A \to K_{H}\right)
$$
is a principal pair. 
\end{thm}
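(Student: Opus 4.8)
The plan is to present the pair as a nested pair of quantum homogeneous spaces, so that the faithful flatness and generalised Hopf--Galois machinery of Section~3 applies, and then to identify the fibre with the group algebra $\Gamma_H$. Write $\rho_H := (\id \oby \pi_H) \circ \Delta$ and $\rho_K := (\id \oby (\proj_K \circ \pi_H)) \circ \Delta$ for the induced right coactions on $A$, and put $B := A^{\co(H)}$ and $P := A^{\co(K_H)}$. Since $\rho_K = (\id \oby \proj_K) \circ \rho_H$, every $H$-coinvariant is $K_H$-coinvariant, so $B \sseq P$ and the pair is genuinely nested. One must first confirm that $K_H$ is a Hopf algebra at all: the subspace $H\Gamma_H^+$ is automatically a coideal, and the hypothesis that the grouplikes of $H$ are central is exactly what makes it a two-sided, hence Hopf, ideal, so that $\proj_K$ is a surjection of Hopf algebras. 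Cosemisimplicity of $A$ (which passes to the quotients $H$ and $K_H$) then provides the faithful flatness demanded of a quantum homogeneous space for both $B$ and $P$, so that $(B,P)$ is a bona fide nested pair and the conclusions of Section~3 are in force.

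The crux is to compute the fibre and to see that it is a Hopf algebra. Centrality of the grouplikes makes $\Gamma_H$ a normal sub-Hopf-algebra of $H$, and since $H$ is cosemisimple---hence faithfully flat over $\Gamma_H$---we obtain a short exact sequence $\Gamma_H \hookrightarrow H \twoheadrightarrow K_H$ with $H^{\co(K_H)} = \Gamma_H$. I would use this to analyse the residual coaction $\rho_H$ on $P$: for $p \in P$ a short computation with the coaction axiom shows that the $H$-component of $\rho_H(p)$ is $K_H$-coinvariant. Equivalently, decomposing $A$ into $H$-isotypic components (possible as $H$ is cosemisimple), the simple $H$-comodules that become trivial over $K_H$ are precisely the one-dimensional ones, i.e.\ the grouplikes $\gamma \in G(H)$; hence
\[
P \; = \; \bigoplus_{\gamma \in G(H)} P_\gamma, \qquad P_\gamma := \{\, a \in A : \rho_H(a) = a \oby \gamma \,\},
\]
an algebra grading by the group $G(H)$ with $P_1 = A^{\co(H)} = B$. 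Thus $\rho_H$ restricts to a coaction $P \to P \oby \Gamma_H$ making $P$ a $\Gamma_H$-comodule algebra with coinvariants $B$.

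To see that the fibre is the \emph{full} group algebra, I would note that $\pi_H$ is a surjection of right $H$-comodules from $(A,\rho_H)$ onto $(H,\Delta)$, which splits by cosemisimplicity of $H$; lifting the one-dimensional subcomodule $\bC\gamma \sseq H$ yields a nonzero element of $P_\gamma$, so every graded component is nonzero. It follows that the canonical map identifies the fibre $\pi_B(P) = P/B^+P$ with $\Gamma_H$ as a Hopf algebra. With the fibre now a genuine Hopf algebra and the faithful flatness of $A$ over $B$ established, the generalised Hopf--Galois extension of Section~3 becomes an honest $\Gamma_H$-Hopf--Galois extension, and $P$ is a principal $\Gamma_H$-comodule algebra over $B$: principality follows from Schneider's theorem, since $\Gamma_H$ is cosemisimple with bijective antipode and $P$ is faithfully flat over $B$. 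Hence $(B,P)$ is a principal pair, as claimed.

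I expect the main obstacle to be the fibre computation of the middle paragraphs---showing that the residual coaction on $P$ is governed exactly by the grouplikes and that all of them occur. This is precisely where both hypotheses are used: the centrality of the grouplikes, which makes $\Gamma_H$ normal and $K_H$ a Hopf algebra (the analogue of $N$ being normal in $M$), and the cosemisimplicity of $A$, which both furnishes the isotypic decomposition and splits the comodule surjection $\pi_H$.
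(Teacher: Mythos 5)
Your overall strategy mirrors the paper's: realise the pair as a nested pair of quantum homogeneous spaces via cosemisimplicity, identify the fibre $H^{\co(K_H)}$ with the group algebra $\mathbb{C}\Gamma_H$, and conclude from the Hopf-subalgebra condition that the pair is principal. (Your final paragraph re-deriving the Hopf--Galois property and invoking Schneider is redundant: by the paper's definition, a principal pair only requires the fibre to be a Hopf subalgebra of $\pi_B(A)$; principality of the comodule algebra is then Proposition \ref{prop:PPProp}, a general fact about principal pairs.) The problem is that the two steps you dispatch in passing are exactly the steps carrying the technical weight, and as stated they are gaps. First, the parenthetical claim that cosemisimplicity of $A$ \emph{passes to the quotients} $H$ and $K_H$ is not a standard fact: cosemisimplicity is inherited by subcoalgebras, but for quotient Hopf algebras it requires proof, and the naive argument fails --- the Haar functional does not descend (in $\mathbb{C}[x^{\pm 1}]$ the Hopf ideal generated by $x-1$ satisfies $h(x-1)=-1\neq 0$). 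Since the faithful flatness making $B$ and $P$ quantum homogeneous spaces rests entirely on this claim, your nested-pair step is unsupported. The paper's own proof (whose stated hypothesis in the body of the paper is that $H$, not $A$, is cosemisimple) instead \emph{proves} cosemisimplicity of $K_H$ in Lemma \ref{lem:GPW}: centrality of the grouplikes makes the ideal $\langle \mathbb{C}\Gamma_H^+\rangle$ homogeneous with respect to the $\Gamma$-decomposition $H=\bigoplus_{[\alpha]}\mathbb{C}\Gamma_H H_\alpha$, so this decomposition descends to $K_H$, and projection onto the identity component is a Haar functional. Note that in your write-up centrality is used only to make $H\Gamma_H^+$ two-sided; you never use it where the paper needs it most.

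Second, your fibre computation runs through ``$H$ is cosemisimple --- hence faithfully flat over $\Gamma_H$'' plus the normal-Hopf-subalgebra correspondence to get $H^{\co(K_H)}=\mathbb{C}\Gamma_H$. Faithful flatness of a cosemisimple Hopf algebra over a Hopf subalgebra is Chirvasitu's theorem, a deep result that cannot be carried by the word ``hence''; if you want to use it you must cite it, and the paper deliberately avoids it. Its elementary substitute: by construction $\Delta_{R,K_H}(H_\alpha)\subseteq H_\alpha\otimes \proj_K(H_\alpha)$, so a nonzero element of $H_\alpha$ is right $K_H$-coinvariant if and only if $[\alpha]$ is the class of the trivial corepresentation, i.e.\ if and only if $H_\alpha=\mathbb{C}\gamma$ for some $\gamma\in\Gamma_H$; homogeneity of $H^{\co(K_H)}$ with respect to the Peter--Weyl decomposition then gives $H^{\co(K_H)}=\mathbb{C}\Gamma_H$ directly. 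Moreover the identity $\pi_H(P)=H^{\co(K_H)}$ comes for free from Takeuchi's equivalence (Proposition \ref{prop:NestedPairs}), which also renders your lifting argument (showing every grouplike occurs in the grading of $P$) unnecessary. In summary: right skeleton, and your auxiliary computations (the $\Gamma_H$-grading of $P$, the splitting of $\pi_H$) are correct, but the two load-bearing claims are asserted rather than proved, and the elementary arguments the paper substitutes for them --- which is precisely where the centrality hypothesis does its work --- are missing from your proposal.
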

We then show that we have an associated decomposition of the total space $P = A^{\co(K_H)}$ into a graded algebra whose summands are relative line modules, and show that every relative line module over the base space is of this form.

In Section 5 we treat our motivating family of examples, the quantum flag manifolds. Beginning with the necessary definitions of Drinfeld--Jimbo quantised enveloping algebras, quantum coordinate algebras, and the quantum flag manifolds, we then verify the group algebra principal pair condition by establishing the following theorem.

\begin{thm}
For any quantum flag manifold $\OO_q(G/L_S)$, the pair
$$
\Big(\OO_q(G/L_S), \, \OO_q(G/L^{\mathrm{s}}_S)\Big)
$$
is a group algebra principal pair.
\end{thm}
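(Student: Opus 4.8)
The plan is to reduce the statement to the group algebra principal pair criterion of Section 4 by exhibiting the relevant Hopf algebras and projections and then verifying its two hypotheses. First I would set $A := \OO_q(G)$ and take $H := \slevi$, the quantum Levi factor, with $\pi_H : \OO_q(G) \to \slevi$ the surjective Hopf algebra map dual to the inclusion $\uslevi \hookrightarrow U_q(\fg)$. By the definitions recalled at the start of Section 5, the quantum flag manifold is precisely the coinvariant subalgebra $\OO_q(G/L_S) = \OO_q(G)^{\co(\slevi)}$, so that the base of the resulting pair is $B = \OO_q(G/L_S)$, as required. The essential identification to establish is that the Hopf algebra $K_H$ --- the quotient of $\slevi$ by the augmentation ideal of the group Hopf algebra $\Gamma_H$ of its grouplike elements --- coincides with the quantum semisimple Levi factor $\sslevi$. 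Granting this, the total space becomes $P = \OO_q(G)^{\co(K_H)} = \qphs$, and it remains only to check the two hypotheses of the criterion.

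Cosemisimplicity of $A = \OO_q(G)$ is a standard fact for $q$ not a root of unity: the category of finite-dimensional type-one $U_q(\fg)$-modules is semisimple, and dualising this yields cosemisimplicity of the matrix coefficient algebra $\OO_q(G)$, which I would invoke with the reference recalled in Section 5. The real work lies in verifying that every grouplike element of $H = \slevi$ is central in $\slevi$. The grouplikes of $\slevi$ correspond to its one-dimensional corepresentations, which are indexed by the sublattice of weights $\lambda$ orthogonal to the simple roots $\alpha_i$ with $i \in S$; classically these are exactly the characters of the connected centre of $L_S$. Writing $c_\lambda$ for the grouplike attached to such a $\lambda$, the commutation of $c_\lambda$ with an arbitrary matrix coefficient of a corepresentation of $\slevi$ is governed by the $R$-matrix (FRT-type) relations and yields a scalar factor built from powers $q^{(\lambda,\beta)}$, where $\beta$ runs over weight differences occurring in that corepresentation and hence lies in the $\bZ$-span of $\{\alpha_i : i \in S\}$. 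Since $\lambda$ is orthogonal to each such $\alpha_i$, every such factor collapses to $1$, so $c_\lambda$ commutes with all of $\slevi$; I would carry this out by a direct computation on the standard generators.

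With cosemisimplicity of $\OO_q(G)$ and centrality of the grouplikes of $\slevi$ in hand, the group algebra principal pair criterion of Section 4 applies and yields that $\big(\OO_q(G/L_S),\, \qphs\big)$ is a group algebra principal pair. The main obstacle is this centrality computation together with the identification $K_H = \sslevi$: both hinge on correctly pinning down the grouplikes of the quantum Levi factor and their commutation behaviour, which is the quantum counterpart of the classical decomposition $L_S = L_S^{\mathrm{s}} \cdot Z(L_S)^{\circ}$.
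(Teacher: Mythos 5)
Your reduction follows the same skeleton as the paper's proof: set $A = \OO_q(G)$, $H = \slevi$, and feed the pair into the criterion of Theorem \ref{thm:GAPP}. The genuine gap is the identification $K_{\slevi} \cong \sslevi$, which you call ``the essential identification to establish'' and then immediately grant, deferring it again in your closing paragraph as ``the main obstacle''. This identification is where the paper's proof does virtually all of its work, and it is not a formality that follows from pinning down the grouplikes. Knowing that the grouplikes of $\slevi$ are indexed by $\mathcal{P}_{S^c}$ only shows that each $\gamma - 1$ lies in the kernel of the restriction map $\rho:\slevi \to \sslevi$, i.e.\ that the ideal $I = \langle \gamma - 1 \,|\, \gamma \in \Gamma_{\slevi}\rangle$ is contained in $\ker(\rho)$; this yields a surjection $K_{\slevi} \twoheadrightarrow \sslevi$, but the actual content is injectivity, $I = \ker(\rho)$. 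The paper proves this by passing to the coarser decomposition $\slevi \simeq \bigoplus_{\lambda \in \mathcal{P}_S^+} A_{\lambda}$, with $A_{\lambda} = \bigoplus_{\mu \in \mathcal{P}_{S^c}} C(U_{\lambda + \mu})$, observing that $I$ is homogeneous for it, and comparing dimensions componentwise: $\dim(A_{\lambda}/I_{\lambda}) \leq \dim C(W_{\lambda})$, while surjectivity of $\rho$ forces equality, whence $I_{\lambda} = \ker(\rho)_{\lambda}$. Without this step (or a substitute for it) you have not identified the pair $\big(\OO_q(G/L_S), \OO_q(G/L_S^{\,\mathrm{s}})\big)$ of the statement with the pair $\big(A^{\co(H)}, A^{\co(K_H)}\big)$ that the criterion produces, so the theorem is not proved.

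Two further points, both repairable but resting on structure you neither establish nor cite. First, you verify cosemisimplicity of $A = \OO_q(G)$, but Theorem \ref{thm:GAPP} as proved in Section \ref{section:GAPP} requires cosemisimplicity of $H$ (and, via Lemma \ref{lem:GPW}, of $K_H$): it is cosemisimplicity of the quotient Hopf algebras that makes the two coinvariant subalgebras quantum homogeneous spaces. (The Introduction's statement of the theorem does say ``$A$ cosemisimple'', but the proof uses $H$.) Cosemisimplicity of $A$ is not a substitute, since a quotient Hopf algebra of a cosemisimple Hopf algebra need not be cosemisimple; what should be cited is that $\slevi$ is cosemisimple by construction, being the type-$1$ dual of $\uslevi$ --- which is exactly what the paper uses. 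Second, your centrality argument runs through FRT/$R$-matrix commutation relations in $\slevi$, which presupposes a coquasitriangular structure on $\slevi$. This does not come for free from $\OO_q(G)$: the universal $R$-matrix of $U_q(\frak{g})$ involves root vectors outside $\uslevi$, so the $r$-form of $\OO_q(G)$ does not descend to the quotient, and one must instead invoke the braiding on type-$1$ $\uslevi$-modules. The paper's Lemma \ref{lem:central} sidesteps this with a more elementary computation, evaluating $\gamma h$ and $h\gamma$ on PBW-type monomials $(\prod_{i=1}^r K_i^{a_i})X$, with $X$ a monomial in $E_j, F_j$ for $j \in S$, using only that $\gamma$ is an algebra character killing $E_j, F_j$ and taking the value $1$ on $K_j$, $j \in S$. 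Your weight computation is correct in spirit, but as written both it and the cosemisimplicity check invoke the wrong (or unjustified) inputs.
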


This gives a concise description of the relative line modules in terms of a $\mathcal{P}_{S^c}$-algebra grading on $\OO_q(G/L^{\mathrm{s}}_S)$. We then use this description to construct an explicit left $A$-covariant principal connection for the principal comodule algebra, generalising the well-known $q$-monopole connection for the special case of the Podle\'s sphere. Non-cleftness of the Hopf--Galois extension is also established.

In Section 6, we produce two families of \emph{noncommutative fibrations} in the spirit of Brzezi\'{n}ski and Szyma\'{n}ski, each associated to a nested pair of subsets of the simple roots of a complex semisimple Lie algebra $\mathfrak{g}$. Examples include noncommutative fibrations with quantum sphere, quantum Stiefel manifold, and quantum flag manifold fibers.

We finish with an appendix discussing some relevant details of quantum homogeneous spaces.

\subsection*{Acknowledgments} We would like to thank Andrey Krutov, Karen Strung, Hans--Jurgen Schneider, Paolo Saracco, Rita Fioresi, Marco Matassa, Tomasz Brzezi\'{n}ski, and Wojciech Szyma\'{n}ski for helpful discussions.

\section{Preliminaries on Principal Comodule Algebras}

In this section we the necessary results on principal comodule algebras, Schneider's equivalence, relative Hopf modules, and Takeuchi's equivalence. All this material is by now quite well known, and a more detailed presentation can be found in the monographs \cite{BeggsMajid:Leabh,TBGS}.

Throughout this section, and indeed the paper, all algebras will be unital and defined over $\mathbb{C}$, and all Hopf algebras will be assumed to have a bijective antipode. We denote the coproduct, counit, and antipode of a Hopf algebra $H$ by $\Delta, \epsilon$, and $S$ respectively, and we denote the cotensor product over $H$ by $\square_H$. 

\subsection{Principal Comodule Algebras} \label{subsection:PCA}

A right $H$-comodule algebra $(P,\Delta_R)$ is said to be a {\em $H$-Hopf--Galois extension of} $B := P^{\co(H)}$ 
if for $m_P:P \otimes_B P \to P$ the multiplication of~$P$,
a bijection is given by 
$$
\can := (m_P \otimes \id) \circ (\id \otimes \DEL_R): P \otimes_B P \to P \otimes H.
$$
We call $P$ the \emph{total algebra} and $B$ the \emph{base algebra}. We say that $P$ is faithfully flat as a right $B$-module if the functor 
$$
P \otimes_B -: {}_B\mathrm{Mod} \to \mathrm{Mod_{\mathbb{C}}}
$$
preserves and reflects exact sequences. Faithful flatness as a left $B$-module is defined analogously.

\begin{defn}
A {\em principal right $H$-comodule algebra} is a right $H$-comodule algebra $(P,\DEL_R)$ such that $P$ is a Hopf--Galois extension of $B := P^{\co(H)}$ and $P$ is faithfully flat as a right and left $B$-module.
\end{defn}

As shown in \cite{BrzBohm,BRzHajComptesT}, a right $H$-comodule algebra $(P,\Delta_R)$ is principal if and only if there exists a principal $\ell$-map $\ell: H \to P \oby P$, as defined below.

\begin{defn}\label{defn:ell}
For a right $H$-comodule algebra $(P,\Delta_R)$, a {\em principal $\ell$-map} is a linear map $\ell:H \to P \otimes P$ satisfying 
\bet
\item $\ell(1_H) = 1_P \oby 1_P$,
\item $m_P \circ \ell = \e_H 1_P$,
\item $(\ell \oby \id_H) \circ \DEL = (\id_P \oby \DEL_R) \circ \ell$,
\item $(\id_H \oby \ell) \circ \DEL = (\DEL_L \oby \id_P) \circ \ell$,
\eet
where, for $\text{flip}: P \oby H \to H \oby P$ the flip map, $\DEL_L : = (S \oby \id) \circ \text{flip} \circ \Delta_R$.
\end{defn}

\subsection{Schneider's Descent Theorem}

Let $H$ be a Hopf algebra, $(P,\Delta_R)$ a right $H$-comodule algebra, and denote $B := P^{\co(H)}$. Then $(P,\Delta_R)$ is a principal comodule algebra if and only if the functor
$$
P \otimes_B -: {}_B \mathrm{Mod} \to {}_P\mathrm{Mod}^H
$$
induces an equivalence of categories. In this case the inverse functor to $P \otimes_B -$ is 
\begin{align*}
{}_P\mathrm{Mod}^H \to {}_B\mathrm{Mod}, & & \mathcal{F} \mapsto \mathcal{F}^{\co(H)}.
\end{align*}
This result is known as \emph{Schneider's descent theorem}, but we also find it convenient to refer to it as \emph{Schneider's equivalence}, see \cite[Theorem I]{Schneider90} for a proof. Explicitly, the unit natural isomorphism for the equivalence is given by
\begin{align} \label{eqn:Sunit}
\unit: \mathcal{M} \to (P\otimes_B \mathcal{M})^{\co(\pi_B(P))}, & & m \mto 1 \otimes m,
\end{align}
while the counit isomorphism is given by
\begin{align} \label{eqn:Scounit}
\qquad \counit:P\otimes_B \mathcal{N}^{\co(\pi_B(P))}\to \mathcal{N}, \, & & p \otimes n \mto pn.
\end{align}


\subsection{Relative Hopf Modules} 

We say that a left coideal subalgebra $B \sseq A$ is a \emph{quantum homogeneous $A$-space} if $A$ is faithfully flat as a right $B$-module and $B^+A = AB^+$. (See Appendix \ref{app:QHSIdealsQSGs} for some equivalent presentations of the definition.) 
It follows from \cite[Theorem 1]{Tak} that, for the Hopf algebra surjection $\pi_B:A \to A/B^+A$, and the associated right $\pi_B(A)$-coaction $\Delta_{R,\pi_B} := (\id \otimes \pi_B) \circ \Delta$, the space of coinvariants $A^{\co(A/B^+A)}$ is equal to $B$.

We denote by~${}^A_B\mathrm{Mod}$ the category of \emph{relative Hopf modules}, that is, the category whose objects are left \mbox{$A$-comodules} \mbox{$\DEL_L:\mathcal{F} \to A \otimes \mathcal{F}$}, endowed with a left $B$-module structure such that, for all $f \in \mathcal{F},$ \mbox{$b \in B$}, we have $\DEL_L(bf) = \Delta_L(b)\DEL_L(f)$, 
and whose morphisms are left $A$-comodule, left $B$-module, maps. As explained in Appendix \ref{subsection:PrinConns}, very relative Hopf module is projective as a left $B$-module. 
 A \emph{relative line module} over $B$ is an invertible object $\EE$ in the cateogry ${}^A_B\mathrm{Mod}_0$. 

\subsection{Takeuchi's Equivalence}
In the following we denote by ${}^{\pi_B}\mathrm{Mod}$ the category whose objects are left \mbox{$\pi_B(A)$-comodules}, and whose morphisms are left $\pi_B(A)$-comodule maps. We use similar notation for right $\pi_B(A)$-comodules. 

Define a functor $\Phi:{}^A_B\mathrm{Mod} \to {}^{\pi_B}\mathrm{Mod}$ by setting $\Phi(\mathcal{F}) := \mathcal{F}/B^+\mathcal{F}$, for $B^+ := B \cap \ker(\e)$, where the left $\pi_B(A)$-comodule structure of $\Phi(\mathcal{\F})$ is given by 
$
\Delta_L[f] := \pi_B(f_{(-1)})\otimes [f_{(0)}],
$
with square brackets denoting the coset of an element in $\Phi(\mathcal{\F})$. In the other direction, we use the cotensor product $\square_{\pi_B(A)}$, which we find convenient to denote by $\square_{\pi_B}$. Define a functor $\Psi: {}^{\pi_B}\mathrm{Mod} \to {}^A_B\mathrm{Mod}$ by setting $\Psi(V) := A \,\square_{\pi_B} V$, where the left $B$-module and left $A$-comodule structures of $\Psi(V)$ are defined on the first tensor factor, and if $\gamma$ is a morphism in ${}^{\pi_B}\mathrm{Mod}$, then $\Psi(\gamma) := \id \otimes \gamma$. Note that $A$ is naturally an object in ${}^A_B\mathrm{Mod}$, and that $\Phi(A) = \pi_P(A).$

As established in~\cite[Theorem 1]{Tak}, an adjoint equivalence of categories between~${}^A_B\mathrm{Mod}$ and~${}^{\pi_B}\mathrm{Mod}$, which we call \emph{Takeuchi's equivalence}, is given by the functors $\Phi$ and $\Psi$, the unit natural isomorphism
$
\unit: \F \to \Psi \circ \Phi(\F)$, defined by $\unit(f) = f_{(-1)} \otimes [f_{(0)}],
$
and the counit natural isomorphism 
$
\counit := \e \otimes \id: \Phi \circ \Psi(V) \to V.
$
The \emph{dimension} $\mathrm{dim}(\F)$ of an object $\F \in {}^A_B\mathrm{Mod}$ is the vector space dimension of $\Phi(\F)$.

Consider now the category $^A_B\textrm{Mod}_0$ whose objects are objects $\F$ in ${}^A_B\textrm{Mod}$ endowed with the right $B$-module structure 
\begin{align*}
fb := f_{(-2)}bS(f_{(-1)})f_{(0)}, & & \textrm{ for } f \in \F, \, b \in B. 
\end{align*}
It is instructive to note that any $b \in B$ acts on $A \square_{\pi_B} \Phi(\F)$ as left multiplication on the first tensor factor. Clearly, $^A_B\textrm{Mod}_0$ is equivalent to $^A_B\textrm{Mod}$, however $^A_B\textrm{Mod}_0$ comes equipped with a monoidal structure given by the tensor product $\otimes_B$. Moreover, with respect to the obvious monoidal structure on $^H\mathrm{Mod}$, Takeuchi's equivalence is easily endowed with the structure of a monoidal equivalence (see \cite[\textsection 4]{MMF2}). An immediate implication is that an object $\EE$ is invertible (that is, it is a relative line module) if and only if $\dim(\EE) = 1$.

\section{Principal Pairs of Quantum Homogeneous Spaces} \label{section:PPofQHS}

In this section we consider nested pairs of quantum homogeneous spaces $B \subseteq P$, which we consider as noncommutative generalisations of homogeneous fibrations (as discussed in the introduction). We show that $P$ can be presented as a relative Hopf module associated to a homogeneous fiber, and that $P$ is a type of generalised Hopf--Galois extension of $B$. Thus these pairs satisfy some of the key properties Brzezi\'{n}ski and Szyma\'{n}ski identified for \emph{noncommutative fibrations} \cite{TBWS}. We then consider the case where the fiber is a Hopf algebra, generalising homogeneous principle bundles as considered in classical geometry \cite[\textsection 2]{OVLie}. In this case $P$ is shown to be a principal comodule algebra with base $B$, and hence we recover the definition of a principal pair introduced by Mrozinski and the authors in \cite{BWGrass}. The new formulation offers a significant simplification of the original definition, and sets it in the context of classical homogeneous fibrations. We also show how Schneider's equivalence for principal comodule algebras interacts with the generalisation of Takeuchi's equivalence, and give necessary and sufficient criteria for our Hopf--Galois extensions to be non-cleft.

\subsection{Nested Pairs of Quantum Homogeneous Spaces}

Let $A$ be a Hopf algebra, and $B \subseteq P \subseteq A$ a nested pair of quantum homogeneous $A$-spaces. Note that $P$ is naturally an object in ${}^A_B\mathrm{Mod}$ so that the unit of Takeuchi's equivalence gives us the isomorphism 
$$
P \simeq A \, \square_{\pi_B} \pi_B(P)
$$
as objects in the category ${}^A_B\mathrm{Mod}$. Another easy observation is given by the following lemma, which will be used in the proof of Proposition \ref{prop:PPProp} for the special case of principal pairs. 

\begin{lem} \label{lem:FF}
For $A$ a Hopf algebra and $(B,P)$ a nested pair of quantum homogeneous $A$-spaces, $P$ is faithfully flat as a right $B$-module.
\end{lem}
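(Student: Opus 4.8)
The plan is to obtain the result as an instance of transitivity of faithful flatness, deducing it from the two faithful flatness conditions already built into the hypotheses. Since $(B,P)$ is a nested pair of quantum homogeneous $A$-spaces, both $B$ and $P$ are themselves quantum homogeneous $A$-spaces; by definition this means that $A$ is faithfully flat as a right $B$-module and also faithfully flat as a right $P$-module. Phrased functorially, both
$$
A \otimes_B -\colon {}_B\mathrm{Mod} \to \mathrm{Mod}_{\mathbb{C}}, \qquad A \otimes_P -\colon {}_P\mathrm{Mod} \to \mathrm{Mod}_{\mathbb{C}}
$$
preserve and reflect exact sequences. Writing $F := P \otimes_B -\colon {}_B\mathrm{Mod} \to {}_P\mathrm{Mod}$ (exactness in ${}_P\mathrm{Mod}$ being detected on underlying vector spaces), the task is to show that $F$ preserves and reflects exact sequences, which is precisely the assertion that $P$ is faithfully flat as a right $B$-module.

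The crucial input is the associativity isomorphism: for every $M \in {}_B\mathrm{Mod}$,
$$
A \otimes_P (P \otimes_B M) \cong (A \otimes_P P) \otimes_B M \cong A \otimes_B M,
$$
natural in $M$, where I use $A \otimes_P P \cong A$ as right $B$-modules (the $B$-action arising by restriction from the $P$-action, since $B \subseteq P$). Setting $G := A \otimes_P -$, this says exactly that the composite $G \circ F$ is naturally isomorphic to $A \otimes_B -$, and hence that $G \circ F$ preserves and reflects exact sequences.

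It then remains to run the standard argument for composites of functors. If $\mathcal{S}$ is an exact sequence in ${}_B\mathrm{Mod}$, then $G(F(\mathcal{S}))$ is exact because $G \circ F \cong A \otimes_B -$ preserves exactness, and since $G$ reflects exactness we conclude that $F(\mathcal{S})$ is exact; thus $F$ preserves exactness. Conversely, if $F(\mathcal{S})$ is exact then $G(F(\mathcal{S}))$ is exact because $G$ preserves exactness, and as $G \circ F$ reflects exactness it follows that $\mathcal{S}$ is exact; thus $F$ reflects exactness. Together these give that $F = P \otimes_B -$ preserves and reflects exact sequences, completing the proof.

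The argument is essentially formal, so I do not anticipate a genuine obstacle; the one point that genuinely carries the content — and which I would be careful to flag — is that we must use the \emph{faithful} part of $A$ being faithfully flat over $P$, not merely its flatness: it is the reflection property of $G = A \otimes_P -$ that upgrades exactness of $G(F(\mathcal{S}))$ to exactness of $F(\mathcal{S})$. Had $A$ been only flat over $P$, the step showing that $F$ preserves exactness would fail. Beyond this, the only routine care needed is in keeping the left/right module conventions aligned with the definition of a quantum homogeneous space and confirming naturality of the associativity isomorphism in $M$.
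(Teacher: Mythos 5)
Your proof is correct, and it takes a genuinely different route from the paper's. The paper argues in two steps: flatness of $P$ over $B$ follows because $P$ is an object of ${}^A_B\mathrm{Mod}$ and every relative Hopf module is projective over $B$ (a consequence of the quantum homogeneous space structure, via Takeuchi's equivalence and the Cuntz--Quillen theorem discussed in the appendix); faithfulness then follows from the criterion that a flat module is faithfully flat if and only if $P \otimes_B \mathcal{M} \neq 0$ for every nonzero left $B$-module $\mathcal{M}$, which the paper checks by noting that $P \otimes_B \mathcal{M} = 0$ would force $1_A \otimes \mathcal{M} = 0$, hence $A \otimes_B \mathcal{M} = 0$, contradicting faithful flatness of $A$ over $B$. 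In particular, the paper never invokes faithful flatness of $A$ over $P$: it only uses that $P$ is a coideal subalgebra containing $B$, together with the heavier projectivity result. Your argument is the mirror image of this: it uses no Hopf-theoretic input beyond the definitions, relying instead on faithful flatness of $A$ over both $B$ and $P$ and the natural isomorphism $(A \otimes_P -) \circ (P \otimes_B -) \cong A \otimes_B -$, i.e.\ a purely ring-theoretic ``two-out-of-three'' property valid for any nested triple of algebras $B \subseteq P \subseteq A$. What each buys: the paper's route reuses machinery it has already set up and needs elsewhere, while yours is more elementary and strictly more general (no coideal or Hopf structure required), and you correctly identify that it is the reflection (faithfulness) half of $A$'s faithful flatness over $P$ --- exactly where the nested-pair hypothesis enters --- that makes the descent work.
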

\begin{proof}
Since every object of ${}^A_B\mathrm{Mod}$ is projective as a right $B$-module, $P$ is certainly flat as a right $B$-module. Now $P$ is \emph{faithfully}  flat as a right $B$-module if and only if $P \otimes_B \mathcal{M} \neq 0$, for all left $B$-modules $\mathcal{M}$. If $P \otimes_B \mathcal{M} = 0$, then we must have that $1_A \otimes_B \mathcal{M} = 0$, implying that $A \otimes_B \mathcal{M} = 0$, contradicting faithful flatness of $A$ as a right $B$-module. Thus we conclude that $P$ is faithfully flat as a right $B$-module. 
\end{proof}

For the special case of $A = P$, a nested pair obviously reduces to a quantum homogeneous space. The fact that any quantum homogeneous space is a Hopf--Galois extension of its space of coinvariants is generalised by the following result. 

\begin{prop} \label{prop:generalisedHG}
The map $\mathrm{can}:A \otimes_B A \to A \otimes \pi_B(A)$ restricts to an isomorphism
\begin{align*}
P \otimes_B P \to A \, \square_{\pi_P} \pi_B(P), 
\end{align*}
in ${}^A_P\mathrm{Mod}$, the category of $(A,P)$-relative Hopf modules. 
\end{prop}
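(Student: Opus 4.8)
The plan is to recognise both the domain $P \otimes_B P$ and the codomain $A\,\square_{\pi_P}\pi_B(P)$ as objects of ${}^A_P\mathrm{Mod}$, and then to deduce that the restricted $\mathrm{can}$ is an isomorphism by pushing it through the Takeuchi equivalence ${}^A_P\mathrm{Mod}\simeq{}^{\pi_P}\mathrm{Mod}$ attached to the quantum homogeneous space $P\sseq A$, whose functors I write as $\Phi_P(\F)=\F/P^+\F$ and $\Psi_P(V)=A\,\square_{\pi_P}V$, with $P^+:=P\cap\ker(\e)$. First I would fix the three comodule structures in play. Since $B\sseq P$ gives $B^+A\sseq P^+A$, the surjections factor as $\pi_P=\varpi\circ\pi_B$ for a unique Hopf algebra surjection $\varpi:\pi_B(A)\to\pi_P(A)$. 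This endows $\pi_B(A)$ with the left $\pi_P(A)$-coaction $(\varpi\otimes\id)\circ\DEL_{\pi_B(A)}$, explicitly $\pi_B(a)\mapsto\pi_P(a_{(1)})\otimes\pi_B(a_{(2)})$; because $P$ is a left coideal this restricts to a left $\pi_P(A)$-coaction on $\pi_B(P)$, so that $A\,\square_{\pi_P}\pi_B(P)=\Psi_P(\pi_B(P))$ is a legitimate object of ${}^A_P\mathrm{Mod}$. On the domain side I equip $P\otimes_B P$ with its natural structure as an object of ${}^A_P\mathrm{Mod}$: the diagonal left $A$-coaction $p\otimes_B p'\mapsto p_{(1)}p'_{(1)}\otimes(p_{(2)}\otimes_B p'_{(2)})$ together with left $P$-multiplication on the first leg.

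Next I would check that $\mathrm{can}$ carries $P\otimes_B P$ into $A\,\square_{\pi_P}\pi_B(P)$ and is a morphism there. Since $P$ is a left coideal, $\mathrm{can}(p\otimes_B p')=pp'_{(1)}\otimes\pi_B(p'_{(2)})$ already lies in $A\otimes\pi_B(P)$; the substantive point is the cotensor relation, which reduces to the identity $\DEL_{R,\pi_P}(pp'_{(1)})=pp'_{(1)}\otimes\pi_P(p'_{(2)})$. This in turn holds because $\DEL_{R,\pi_P}=(\id\otimes\pi_P)\circ\DEL$ is an algebra map and $P=A^{\co(\pi_P(A))}$ gives $\DEL_{R,\pi_P}(p)=p\otimes 1$, so that $\DEL_{R,\pi_P}(pp'_{(1)})=(p\otimes 1)\DEL_{R,\pi_P}(p'_{(1)})$. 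A short calculation then shows that $\mathrm{can}$ intertwines the diagonal coaction on the domain with $\DEL\otimes\id$ on the codomain and is left $P$-linear, hence is a morphism in ${}^A_P\mathrm{Mod}$.

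Finally I would apply $\Phi_P$ and invoke that an equivalence reflects isomorphisms. On the codomain, the counit of Takeuchi's equivalence gives $\Phi_P\Psi_P(\pi_B(P))\cong\pi_B(P)$ via $\e\otimes\id$. On the domain, since $P^+$ is the augmentation ideal of $P$ and the $P$-action is on the first leg, right exactness of $-\otimes_B P$ yields $\Phi_P(P\otimes_B P)=(P\otimes_B P)/P^+(P\otimes_B P)\cong\bC_{\e}\otimes_B P=P/B^+P$, which by faithful flatness of $A$ as a right $B$-module is identified with the image $\pi_B(P)=\Phi(P)$. Tracing a generator $p\otimes_B p'$ through each identification, both routes return $\e(p)\pi_B(p')$, so $\Phi_P(\mathrm{can})=\id_{\pi_B(P)}$; as $\Phi_P$ is an equivalence of categories it reflects isomorphisms, and therefore $\mathrm{can}$ restricts to an isomorphism in ${}^A_P\mathrm{Mod}$.

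The main obstacle I anticipate is not surjectivity onto the cotensor product -- normally the delicate half of such a statement, but here supplied for free by the reflection of isomorphisms -- but rather the careful bookkeeping of the comodule structures, and in particular the verification that the image of $\mathrm{can}$ genuinely lands inside $A\,\square_{\pi_P}\pi_B(P)$ rather than merely in $A\otimes\pi_B(P)$; this is exactly the step where the $\pi_P$-coinvariance of $P$ is indispensable. A purely direct argument is possible -- embedding $P\otimes_B P\hookrightarrow A\otimes_B A$ via flatness of $P$ (left $B$) and of $A$ (right $B$), then restricting the known bijection $\mathrm{can}$ for the quantum homogeneous space $B\sseq A$ gives injectivity -- but one would still owe a separate surjectivity argument, which is why I prefer to route everything through Takeuchi's equivalence.
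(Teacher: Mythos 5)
Your proposal is correct and follows essentially the same route as the paper: both apply the Takeuchi functor $\Phi_P$ for the quantum homogeneous space $P\subseteq A$, identify $\Phi_P(P\otimes_B P)\simeq\pi_B(P)$ via $[p\otimes p']\mapsto\e(p)\pi_B(p')$, match this against the counit $\counit$ of the equivalence on the codomain, and conclude by reflection of isomorphisms. Your write-up merely makes explicit the bookkeeping (the cotensor-landing and morphism checks) that the paper leaves implicit.
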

\begin{proof}
Consider the vector space isomorphism:
\begin{align*}
\Phi_P(P \otimes_B P) \simeq \pi_B(P), & & [p' \otimes p] \mapsto \e(p')\pi_B(p).
\end{align*}
This gives us the commutative diagram
\begin{align*}
\xymatrix{ 
\Phi(P \otimes_B P) \ar[rrrr]^{\Phi_P(\mathrm{can}|_{P \otimes_B P})~~} \ar[rrd]_{\simeq} & & & & \Phi_P(A \, \square_{\pi_P} \pi_B(P)) \ar[lld]^{\counit} \\
& & \pi_B(P). & & 
}
\end{align*}
Thus the given map is an isomorphism in ${}^A_B\mathrm{Mod}$ as claimed.
\end{proof}

\subsection{A Generalisation of Takeuchi's Equivalence for Nested Pairs}

In this subsection we introduce a generalisation of Takeuchi's equivalence to the setting of nested pairs. We begin by introducing a generalisation of the category of relative Hopf modules. 

\begin{defn} For $A$ a Hopf algebra, and $(B,P)$ a nested pair of quantum homogeneous $A$-spaces, the objects of the category ${}^A_B\textrm{Mod}^{\pi_P}$ are those objects $\F \in {}^A_B\mathrm{Mod}$ endowed with a right $\pi_P(A)$-comodule structure $\Delta_{R,\pi_P}:\F \to \F \otimes \pi_P(A)$ giving $\F$ the structure of an $(A, \pi_P(A))$-bicomodule and such that $\Delta_{R,\pi_P}$ is a left $B$-module map. The morphisms are $(A, \pi_P(A))$-bicomodule and left $B$-module maps.
\end{defn}

For any $\F \in {}^A_B\mathrm{Mod}^{\pi_P}$, regarding $\F$ as an object in ${}^A_B\mathrm{Mod}$ gives the left $\pi_P(A)$-comodule $\Phi(\F)$. We introduce a right $\pi_P(A)$-coaction 
\begin{align*}
\Delta_{R,\pi_P}:\Phi(\F) \to \Phi(\F) \otimes \pi_P(A), & & [f] \mapsto [f_{(0)}] \otimes f_{(1)},
\end{align*}
which is well defined since $\Delta_{R,\pi_P}$ is by assumption a left $B$-module map. Operating on morphisms just as for Takeuchi's equivalence we get $(\pi_B(A),\pi_P(A))$-bicomodule maps, and so, we get a functor from ${}^A_B\mathrm{Mod}^{\pi_P}$ to ${}^{\pi_B}\mathrm{Mod}^{\pi_P}$. By abuse of notation we again denote this functor by $\Phi$.

In the opposite direction, for any $V \in {}^{\pi_B}\mathrm{Mod}^{\pi_P}$, regarding $V$ as an object in ${}^{\pi_B}\mathrm{Mod}$ we have the relative Hopf module $\Psi(V) \in {}^A_B\mathrm{Mod}$. We can endow $\Psi(V)$ with the right $\pi_P(A)$-coaction
\begin{align}
(\id \otimes \Delta_{R,\pi_P}): \Psi(V) = A \square_{\,\pi_B} V \to A \square_{\pi_B} V \otimes \pi_P(A), 
\end{align}
which is well defined since $V$ is a $(\pi_B(A),\pi_P(A))$-bicomdoule. Operating on morphisms just as for Takeuchi's equivalence, we get morphisms in ${}^A_B\mathrm{Mod}$, and so, we have defined a functor from ${}^{\pi_B}\mathrm{Mod}^{\pi_P}$ to ${}^A_B\mathrm{Mod}^{\pi_P}$. By abuse of notation we again denote this functor by $\Psi$.

\begin{prop}\label{prop:PPTak}
The components of Takeuchi's natural transformations $\unit$ and $\counit$ are right $\pi_P(A)$-comodule maps. Hence taken together with the functors $\Phi$ and $\Psi$, we have an adjoint equivalence between the categories ${}^A_B\mathrm{Mod}^{\pi_P}$ and ${}^{\pi_B}\mathrm{Mod}^{\pi_P}$. 
\end{prop}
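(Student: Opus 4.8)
The strategy is to treat this as an enrichment of Takeuchi's equivalence rather than as a fresh construction. The functors $\Phi$ and $\Psi$, together with the unit $\unit$ and counit $\counit$, have already been shown (in the discussion preceding the proposition) to be well defined at the level of the enriched categories, and Takeuchi's equivalence establishes that $\unit$ and $\counit$ are natural isomorphisms of the underlying objects in ${}^A_B\mathrm{Mod}$ and ${}^{\pi_B}\mathrm{Mod}$. Thus the only thing left to verify is that each component of $\unit$ and $\counit$ additionally intertwines the right $\pi_P(A)$-coactions. Once this is done, the components are automatically isomorphisms in the enriched categories, since a bijective $(A,\pi_P(A))$-bicomodule, left $B$-module map has an inverse lying in the same category, and the triangle identities persist because they are equalities of underlying linear maps. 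The adjoint equivalence then follows.

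For the unit, I would fix $\F \in {}^A_B\mathrm{Mod}^{\pi_P}$ and write the bicomodule coaction in combined Sweedler notation as $f \mapsto f_{(-1)} \otimes f_{(0)} \otimes f_{(1)}$, with $f_{(-1)} \in A$ and $f_{(1)} \in \pi_P(A)$. Recall that the induced right coaction on $\Phi(\F)$ is $[f] \mapsto [f_{(0)}] \otimes f_{(1)}$, and that on $\Psi \circ \Phi(\F) = A \, \square_{\pi_B} \Phi(\F)$ it acts on the second tensor factor. Computing $\Delta_{R,\pi_P} \circ \unit(f)$ gives $f_{(-1)} \otimes [f_{(0)}] \otimes f_{(1)}$, while computing $(\unit \otimes \id) \circ \Delta_{R,\pi_P}(f)$ gives $\unit(f_{(0)}) \otimes f_{(1)}$, which expands to the same expression once the bicomodule compatibility of the left $A$- and right $\pi_P(A)$-coactions is applied. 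The two sides therefore agree, so $\unit$ is a right $\pi_P(A)$-comodule map.

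For the counit, I would fix $V \in {}^{\pi_B}\mathrm{Mod}^{\pi_P}$ and note that the right coaction on $\Psi(V) = A \, \square_{\pi_B} V$ is $\id \otimes \Delta_{R,\pi_P}$, so that the induced coaction on $\Phi \circ \Psi(V)$ sends $[a \otimes v]$ to $[a \otimes v_{(0)}] \otimes v_{(1)}$. Since $\counit = \e \otimes \id$, both $\Delta_{R,\pi_P} \circ \counit$ and $(\counit \otimes \id) \circ \Delta_{R,\pi_P}$ send $[a \otimes v]$ to $\e(a)\, v_{(0)} \otimes v_{(1)}$, whence $\counit$ is a right $\pi_P(A)$-comodule map. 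This computation is essentially immediate, because the coaction on $\Psi(V)$ lives entirely on the $V$-factor, which $\counit$ leaves untouched.

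The proof has no serious obstacle; the only point requiring care is keeping the two coactions, the left $A$-coaction and the right $\pi_P(A)$-coaction, cleanly separated and invoking the bicomodule axiom at precisely the right moment in the unit computation. It is worth emphasising that no new naturality or triangle-identity checks are needed: these are inherited verbatim from Takeuchi's equivalence, since the morphisms of the enriched categories form a subclass of those of ${}^A_B\mathrm{Mod}$ and ${}^{\pi_B}\mathrm{Mod}$, and the enrichment adds only the comodule-map condition that the two displayed computations establish.
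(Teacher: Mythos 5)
Your proposal is correct and takes essentially the same route as the paper: the paper's proof consists precisely of the two Sweedler-notation computations you describe (displayed there as commutative diagrams), verifying that $\unit$ and $\counit$ intertwine the right $\pi_P(A)$-coactions, with the remaining adjoint-equivalence data inherited from Takeuchi's equivalence exactly as you argue. Your explicit remarks on why the triangle identities and isomorphism properties carry over are left implicit in the paper, but the mathematical content is identical.
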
 
\begin{proof}
That each component of $\unit$ is a $\pi_P(A)$-comodule map follows from the fact that, for any $\F \in {}^A_B\mathrm{Mod}^{\pi_P}$, and any $f \in \F$, we have 
\begin{align*}
\xymatrix{ 
f \ar@{|->}[d]_{\Delta_{R,\pi_P}} \ar@{|->}[rrr]^{\unit} & & & f_{(-1)}\otimes [f_{(0)}] \ar@{|->}[d]^{\id_A\otimes\Delta_{R,\pi_P}}\\
f_{(0)}\otimes f_{(1)}\ar@{|->}[rrr]_{\unit \otimes \id_H} & & & f_{(-1)}\otimes [f_{(0)}] \otimes f_{(1)}.
}
\end{align*}
That the same holds true for $\counit$ follows from the fact that, for any $V \in {}^{\pi_B}\mathrm{Mod}^{\pi_P}$, and any $\sum_i a_i \otimes v_i \in A \square_{\,\pi_B} V$, we have
\begin{align*}
\xymatrix{ 
\left[\sum_i a_i\otimes v_i\right] \ar@{|->}[d]_{\Delta_{R,\pi_P}} \ar@{|->}[rrr]^{\counit} & & & \sum_i \e (a_i) v_i \ar@{|->}[d]^{\Delta_{R,\pi_P}}\\
\left[\sum_i a_i\otimes (v_{i})_{(0)}\right]\otimes (v_{i})_{(1)} \ar@{|->}[rrr]_{\counit \otimes \id_H} & & & \sum_i \e (a_i) (v_{i})_{(0)}\otimes (v_{i})_{(1)}.
}
\end{align*}
Thus $\Phi$, $\Psi$, $\unit$, and $\counit$ give an adjoint equivalence as claimed.
\end{proof}

\begin{remark}
In \cite[Proposition 3.5]{BWGrass} an alternative generalisation of Takeuchi's equivalence was considered for principal pairs (see Remark \ref{rem:PPOldandNew} below). As a careful reading of the proof will confirm, the argument can be directly extended to the setting of nested pairs of quantum homogeneous spaces.
\end{remark}

\subsection{Some Consequences of the Equivalence}

For any nested pair of quantum homogeneous subspaces $(B,P)$, we have the surjective Hopf algebra map
\begin{align*}
\pi_{B,P}: \pi_B(A) \to \pi_P(A), & & \pi_B(a) \mapsto \pi_P(a).
\end{align*}
Thus $\pi_P(A)$ is canonically a  quantum subgroup of $\pi_B(A)$. This gives us a right $\pi_P(A)$-coaction on $\pi_P(A)$, with respect to which $\pi_B(A)$ has the structure of an object in ${}^{\pi_B(A)}\mathrm{Mod}^{\pi_B(A)}$. 
The following proposition gives an easy but important consequence of Takeuchi's equivalence relating $P$, $B$, and the space of coinvariants of this right coaction. 

\begin{prop} \label{prop:NestedPairs}
For $(B,P)$ a nested pair of quantum homogeneous $A$-spaces:
\begin{enumerate}
\item an isomorphism in the category ${}^A_B\mathrm{Mod}$ is given by
\begin{align*}
P \to A \, \square_{\,\pi_B}\big(\pi_B(A)^{\co(\pi_P(A))}\big), & & p \mapsto p_{(1)} \otimes \pi_B(p_{(2)}),
\end{align*}

\item we have the identity 
$$
\pi_B(P) = \pi_B(A)^{\co(\pi_P(A))}.
$$

\end{enumerate}
\end{prop}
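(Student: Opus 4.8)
The plan is to prove the two parts in tandem, treating part~1 as the main computation and then reading off part~2 from it. Throughout I would write $\e$ also for the (induced) counit of $\pi_B(A)$, noting that $\e \circ \pi_B = \e$, and I would use that $P$, being itself a quantum homogeneous $A$-space, satisfies $P = A^{\co(\pi_P(A))}$ by Takeuchi's theorem applied to $P$ in place of $B$.

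First I would record the easy half of part~2. For $p \in P$ the coinvariance $p_{(1)} \otimes \pi_P(p_{(2)}) = p \otimes 1$ holds in $A \otimes \pi_P(A)$; applying $\pi_B \otimes \id$ and using $\pi_{B,P} \circ \pi_B = \pi_P$ rewrites this as $\Delta_{R,\pi_P}(\pi_B(p)) = \pi_B(p) \otimes 1$, giving the inclusion $\pi_B(P) \subseteq \pi_B(A)^{\co(\pi_P(A))}$. Since $P$ is a left coideal we have $p_{(2)} \in P$, so this inclusion already guarantees that the proposed map $\beta \colon p \mapsto p_{(1)} \otimes \pi_B(p_{(2)})$ takes values in $A \,\square_{\pi_B}\, \pi_B(A)^{\co(\pi_P(A))}$; that $\beta(p)$ satisfies the cotensor condition is the usual computation showing Takeuchi's unit lands in the cotensor product, and $\beta$ is a morphism in ${}^A_B\mathrm{Mod}$ by construction.

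The heart of the argument is to show $\beta$ is an isomorphism by exhibiting an explicit inverse $\theta \colon A \,\square_{\pi_B}\, \pi_B(A)^{\co(\pi_P(A))} \to A$, $\sum_i a_i \otimes x_i \mapsto \sum_i \e(x_i)\, a_i$. Two checks are needed. First, that $\theta$ actually lands in $P = A^{\co(\pi_P(A))}$: feeding the cotensor relation $\sum_i a_{i(1)} \otimes \pi_B(a_{i(2)}) \otimes x_i = \sum_i a_i \otimes x_{i(1)} \otimes x_{i(2)}$ through $\id \otimes \pi_{B,P} \otimes \e$, and invoking the $\pi_P$-coinvariance of each $x_i$ in the form $\pi_{B,P}(x_i) = \e(x_i)\,1$, yields $\Delta_{R,\pi_P}(\theta(\xi)) = \theta(\xi) \otimes 1$. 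Second, that $\theta$ and $\beta$ are mutually inverse: $\theta \circ \beta = \id_P$ is immediate from $\e \circ \pi_B = \e$, while $\beta \circ \theta = \id$ follows by applying $\id \otimes \id \otimes \e$ to the same cotensor relation. I expect this coinvariance-of-$\theta$ step to be the main obstacle, since it is the one place where the cotensor condition and the $\pi_P$-coinvariance of the $x_i$ must be combined in just the right way. This establishes part~1.

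Finally, for part~2, I would compare $\beta$ with the unit of Takeuchi's equivalence. The map $\unit_P \colon P \to A \,\square_{\pi_B}\, \pi_B(P)$, $p \mapsto p_{(1)} \otimes \pi_B(p_{(2)})$, is the isomorphism $P \simeq A \,\square_{\pi_B}\, \pi_B(P)$ recorded earlier as Takeuchi's unit, and $\beta$ is precisely $\unit_P$ followed by $\Psi(\iota) = \id \,\square_{\pi_B}\, \iota$, where $\iota \colon \pi_B(P) \hookrightarrow \pi_B(A)^{\co(\pi_P(A))}$ is the inclusion. Since both $\beta$ and $\unit_P$ are isomorphisms, so is $\Psi(\iota)$; as $\Psi$ is part of Takeuchi's equivalence it reflects isomorphisms, forcing $\iota$ to be an isomorphism and hence $\pi_B(P) = \pi_B(A)^{\co(\pi_P(A))}$. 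As an alternative route to part~2, one could instead invoke associativity of the cotensor product, giving $A \,\square_{\pi_B}\,\big(\pi_B(A)\,\square_{\pi_P}\,\mathbb{C}\big) \cong A \,\square_{\pi_P}\, \mathbb{C} = P$, and then conclude by the same faithfulness of $\Psi$.
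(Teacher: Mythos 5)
Your proof is correct, but part 1 takes a genuinely different route from the paper. The paper's argument is categorical and very short: the Takeuchi unit $\unit\colon A \to A\,\square_{\,\pi_B}\pi_B(A)$ is a right $\pi_P(A)$-comodule isomorphism (with the coaction on the second leg), so applying the coinvariants functor $(-)^{\co(\pi_P(A))}$ to both sides gives $P = A^{\co(\pi_P(A))} \simeq A\,\square_{\,\pi_B}\big(\pi_B(A)^{\co(\pi_P(A))}\big)$ directly; this silently uses that taking coinvariants commutes with the cotensor product when the coaction lives on the second factor. You instead verify the isomorphism by hand, producing the explicit inverse $\theta = (\id\otimes\e)$ and checking that it lands in $P$ and that $\theta$, $\beta$ are mutually inverse via the cotensor relation. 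What your route buys is self-containedness: part 1 needs neither the coinvariants-commute-with-cotensor fact nor the identification $\Phi(P)\simeq\pi_B(P)$ (which the paper's formulation of the unit already presupposes), and your preliminary observation $\pi_B(P)\subseteq\pi_B(A)^{\co(\pi_P(A))}$ makes the well-definedness of $\beta$ transparent; the cost is a longer computation. For part 2 the two proofs are essentially the same idea in different clothing: the paper applies $\Phi$ to the part-1 isomorphism and composes with the Takeuchi counit to exhibit the inclusion $\pi_B(P)\to\pi_B(A)^{\co(\pi_P(A))}$ as an isomorphism, while you factor $\beta = \Psi(\iota)\circ\unit_P$ and invoke the fact that the equivalence functor $\Psi$ reflects isomorphisms to conclude that the inclusion $\iota$ is onto --- both hinge on using Takeuchi's equivalence to transfer the part-1 isomorphism down to the fibers. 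Your alternative cotensor-associativity argument for part 2 is also fine in this setting, though one should note that associativity of $\square$ requires a (co)flatness hypothesis, which holds here but deserves mention.
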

\begin{proof}
~\\
1. ~ The unit of Takeuchi's equivalence gives a right $\pi_P(A)$-comodule isomorphism $\unit:A \to A \, \square_{\,\pi_B} \pi_B(A)$, thus we see that 
$$
P = A^{\co(\pi_P(A))} \simeq \left(A \, \square_{\,\pi_B} \pi_B(A)\right)^{\co(\pi_P(A))} = A \, \square_{\,\pi_B} \left( \pi_B(A)\right)^{\co(\pi_P(A))}.
$$
Thus the claimed isomorphism is given by the unit of the equivalence.

2. ~ Operating by the functor $\Phi$ on the isomorphism given in 1 and then composing $\Phi(\unit)$ with the counit $\counit$ of the equivalence, we get the isomorphism
\begin{align*}
\counit \circ \Phi(\unit): \Phi_B(P) = \pi_B(P) \to \pi_B(A)^{\co(\pi_P(A))}, & & \pi_B(p) \mapsto \pi_B(p),
\end{align*}
and hence the claimed identity. 
\end{proof}

\begin{remark} \label{remark:noncommutativefibration}
Motivated by Lemma \ref{lem:FF}, Proposition \ref{prop:generalisedHG}, and Proposition \ref{prop:NestedPairs}, we think of the triple of algebras
$$
B \hookrightarrow P \twoheadrightarrow \pi_B(A)^{\co(\pi_P(A))}
$$
as a \emph{noncommutative homogeneous fibration}, and refer to it as such, at least informally. Moreover we refer to $\pi_B(A)^{\co(\pi_P(A))}$ as the \emph{fiber} of the fibration.
\end{remark}

\subsection{Principal Pairs}

We now arrive at the main goal of this section: a new and simplified version of the definition of a principal pair, as first introduced in \cite[Definition 3.2]{BWGrass}. The precise relationship between the two definitions is discussed in the remark below. The definition can be viewed as a noncommutative generalisation of a fibration of homogeneous space over a homogeneous space
\begin{align*}
 G/H \to G/K, 
\end{align*}
where $G$ is a group, and $H \subseteq K \subseteq G$ are subgroups such that $H$ is normal in $K$. (See \cite[\textsection 2]{OVLie} for a more detailed discussion.)

\begin{defn}
For a Hopf algebra $A$, a \emph{principal $A$-pair} is a pair $(B,P)$ of nested quantum homogeneous $A$-spaces such that 
$
\pi_B(P) = \pi_B(A)^{\co(\pi_P(A))}
$
is a Hopf subalgebra of $\pi_B(A)$.
\end{defn}

We will show how to construct a principal comodule algebra from any principal pair, beginning with a lemma introducing a right $\pi_B(P)$-coaction on $P$.

\begin{lem}
For a principal $A$-pair $(B,P)$, it holds that 
$$
\Delta_{R,\pi_B}(P) \sseq P \otimes \pi_B(P).
$$
Hence $\Delta_{R,\pi_B}$ restricts to a coaction 
$
\Delta_{R,\pi_B}: P \to P \otimes \pi_B(P)
$
and $B = P^{\co(\pi_B(P))}$.
\end{lem}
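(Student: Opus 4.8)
The plan is to establish the inclusion leg-by-leg, isolating the one place where the principal pair hypothesis is genuinely needed. Writing $\Delta_{R,\pi_B}(p) = \sum p_{(1)} \otimes \pi_B(p_{(2)})$ for $p \in P$, I would first observe that the second tensor leg lands in $\pi_B(P)$ for free: since $P$ is a quantum homogeneous space it is a left coideal subalgebra, so $\Delta(P) \subseteq A \otimes P$ and hence $p_{(2)} \in P$, giving $\pi_B(p_{(2)}) \in \pi_B(P)$. Thus $\Delta_{R,\pi_B}(p) \in A \otimes \pi_B(P)$ with no use of the principal pair condition, and the entire content of the first assertion is that the \emph{first} leg lies in $P = A^{\co(\pi_P(A))}$. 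This matches the classical picture, where restricting the right $K$-action to $\mathcal{O}(G/H)$ lands back in $\mathcal{O}(G/H)$ precisely when $H$ is normal in $K$.

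To control the first leg I would test it against the coaction $\Delta_{R,\pi_P}$, the point being that $P = \ker(\Delta_{R,\pi_P} - \iota)$, where $\iota(a) = a \otimes 1$. Concretely, the goal becomes the identity
\[
\sum p_{(1)} \otimes \pi_P(p_{(2)}) \otimes \pi_B(p_{(3)}) \;=\; \sum p_{(1)} \otimes 1 \otimes \pi_B(p_{(2)})
\]
in $A \otimes \pi_P(A) \otimes \pi_B(A)$, that is, $(\Delta_{R,\pi_P} \otimes \id)\Delta_{R,\pi_B}(p) = (\iota \otimes \id)\Delta_{R,\pi_B}(p)$. This is exactly where the principal pair condition is consumed. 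I would start from the expression $(\id \otimes \Delta_{\pi_B(A)})\Delta_{R,\pi_B}(p) = \sum p_{(1)} \otimes \pi_B(p_{(2)}) \otimes \pi_B(p_{(3)})$ and apply $\id \otimes \pi_{B,P} \otimes \id$, using $\pi_{B,P} \circ \pi_B = \pi_P$ to recover the left-hand side. For the right-hand side, the two trailing legs are $\Delta_{\pi_B(A)}(\pi_B(p_{(2)}))$; because $\pi_B(P)$ is a Hopf subalgebra, hence a subcoalgebra, this lies in $\pi_B(P) \otimes \pi_B(P)$, and because $\pi_B(P) = \pi_B(A)^{\co(\pi_P(A))}$ by Proposition \ref{prop:NestedPairs} the map $\pi_{B,P}$ restricts to $\e$ on $\pi_B(P)$. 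Applying the counit identity then collapses the middle leg to $1$ and returns $\pi_B(p_{(2)})$ on the right, yielding the displayed equation.

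With that identity in hand, the first leg is coinvariant: since $\Delta_{R,\pi_B}(p) \in A \otimes \pi_B(P)$ and $\ker(\Delta_{R,\pi_P} - \iota) = P$, expanding along a basis of $\pi_B(P)$ gives $\Delta_{R,\pi_B}(p) \in P \otimes \pi_B(P)$, so $\Delta_{R,\pi_B}$ restricts to a coaction on $P$. For the final equality $B = P^{\co(\pi_B(P))}$, I would simply note that for $p \in P$ the condition $\Delta_{R,\pi_B}(p) = p \otimes 1$ read inside $P \otimes \pi_B(P)$ is equivalent, via the injection $P \otimes \pi_B(P) \hookrightarrow A \otimes \pi_B(A)$, to the same condition in $A \otimes \pi_B(A)$, i.e. to $p \in A^{\co(\pi_B(A))} = B$; since $B \subseteq P$ this gives $P^{\co(\pi_B(P))} = P \cap B = B$.

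The main obstacle is the second step: showing that the first tensor leg stays inside $P$. Everything else — the second leg, the restriction to a coaction, and the computation of coinvariants — is formal and uses only that $P$ is a left coideal subalgebra together with the good behaviour of $\e$ and the relevant inclusions. It is solely in establishing the three-leg identity that the Hopf-subalgebra hypothesis, the quantum avatar of normality of the fibre subgroup, is actually used.
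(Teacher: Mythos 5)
Your proof is correct, but it takes a genuinely different route from the paper's. The paper's argument is a two-line structural transfer: it moves the subcomodule question across the isomorphism $P \simeq A \, \square_{\,\pi_B} \pi_B(P)$ of Proposition \ref{prop:NestedPairs}(1), where the right $\pi_B(A)$-coaction lives entirely on the second cotensor leg, so the Hopf-subalgebra (in fact merely subcoalgebra) hypothesis on $\pi_B(P)$ immediately makes $A \, \square_{\,\pi_B} \pi_B(P)$ a subcomodule of $A \, \square_{\,\pi_B} \pi_B(A)$. You instead work element-wise in $A$: you split the inclusion into the ``cheap'' second leg (using only that $P$ is a left coideal subalgebra) and the substantive first leg, which you test against $\Delta_{R,\pi_P}$ via the three-leg identity
\begin{align*}
p_{(1)} \otimes \pi_P(p_{(2)}) \otimes \pi_B(p_{(3)}) = p_{(1)} \otimes 1 \otimes \pi_B(p_{(2)}),
\end{align*}
proved by applying $\id \otimes \pi_{B,P} \otimes \id$ and exploiting that $\pi_{B,P}$ restricts to $\e(\cdot)1$ on $\pi_B(P) = \pi_B(A)^{\co(\pi_P(A))}$ (Proposition \ref{prop:NestedPairs}(2)) together with the subcoalgebra property. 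Both proofs consume the principal-pair hypothesis at the same point, but they lean on different halves of Proposition \ref{prop:NestedPairs}: the paper on part (1), you on part (2). What your version buys is explicitness: the paper leaves implicit the (small but real) check that the Takeuchi unit isomorphism intertwines $\Delta_{R,\pi_B}$ on $A$ with the second-leg coaction on the cotensor product, whereas your computation inlines exactly that verification and makes visible where ``normality of the fibre'' enters. What the paper's version buys is brevity and the reusability of the categorical picture. Your treatment of the final statement $B = P^{\co(\pi_B(P))}$ is also sound; the paper simply declares it evident.
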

\begin{proof}
Let us first show that $P$ is a right $\pi_B(A)$-subcomodule of $A$. It follows from the isomorphism given in Proposition \ref{prop:NestedPairs} that $P$ is a right $\pi_B(A)$-subcomodule of $A$ if and only if $A \square_{\,\pi_B} \pi_B(P)$ is a right subcomodule of $A \square_{\,\pi_B} \pi_B(A)$. However, this is a direct consequence of our assumption that $\pi_B(P)$ is a Hopf subalgebra of $\pi_B(A)$. The fact that $B = P^{\co(\pi_B(P))}$ is evident.
\end{proof}

\begin{prop} \label{prop:PPProp}
The pair $(P,\Delta_{R,\pi_B(P)})$ is a principal comodule algebra.
\end{prop}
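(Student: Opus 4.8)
The plan is to verify directly that $(P,\Delta_{R,\pi_B(P)})$ satisfies the two defining conditions of a principal comodule algebra with base $B$: the Hopf--Galois property and faithful flatness as a right and left $B$-module. The preceding lemma has already supplied the identity $B = P^{\co(\pi_B(P))}$, so $P$ is genuinely a right $\pi_B(P)$-comodule algebra over $B$, and it remains only to check these two conditions.

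For the Hopf--Galois condition, the key observation is that the canonical map $\can_P: P \otimes_B P \to P \otimes \pi_B(P)$, $p \otimes p' \mapsto p\, p'_{(1)} \otimes \pi_B(p'_{(2)})$, is nothing but the restriction to $P \otimes_B P$ of the map $\can: A \otimes_B A \to A \otimes \pi_B(A)$, since the two are given by the same formula and the lemma guarantees $\Delta_{R,\pi_B}(P) \subseteq P \otimes \pi_B(P)$. Proposition \ref{prop:generalisedHG} tells us that this restriction is an isomorphism onto $A \,\square_{\pi_P} \pi_B(P)$, so it suffices to identify this cotensor product with $P \otimes \pi_B(P)$. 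Here the principal pair hypothesis enters decisively: since $\pi_B(P) = \pi_B(A)^{\co(\pi_P(A))}$ is a Hopf subalgebra it is closed under the coproduct, and combined with the general identity $\pi_{B,P}(k) = \epsilon(k) 1$ for $k \in \pi_B(A)^{\co(\pi_P(A))}$ this forces the left $\pi_P(A)$-coaction $k \mapsto \pi_{B,P}(k_{(1)}) \otimes k_{(2)}$ on $\pi_B(P)$ to be trivial. For a trivial left comodule the cotensor product collapses, giving $A \,\square_{\pi_P} \pi_B(P) = A^{\co(\pi_P(A))} \otimes \pi_B(P) = P \otimes \pi_B(P)$, where I use that $P = A^{\co(\pi_P(A))}$. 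Hence $\can_P$ is a bijection and $P$ is a $\pi_B(P)$-Hopf--Galois extension of $B$.

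Faithful flatness is then the routine part. Lemma \ref{lem:FF} already records that $P$ is faithfully flat as a right $B$-module; the same argument, run on the other side and using that every relative Hopf module is projective as a left $B$-module together with the faithful flatness of $A$ as a left $B$-module, shows that $P$ is faithfully flat as a left $B$-module as well. (Alternatively, once the Hopf--Galois property and one-sided faithful flatness are in hand, principality follows from Schneider's characterisation \cite[Theorem I]{Schneider90}.) Combining the Hopf--Galois property with two-sided faithful flatness yields the result.

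The only genuinely delicate step is the identification $A \,\square_{\pi_P} \pi_B(P) = P \otimes \pi_B(P)$, that is, the vanishing of the left $\pi_P(A)$-coaction on $\pi_B(P)$. This is precisely where the defining property of a principal pair---that $\pi_B(P)$ be a \emph{Hopf} subalgebra rather than merely a coideal subalgebra of $\pi_B(A)$---is used, and it is the algebraic shadow of the classical requirement that $N$ be normal in $M$ for the fibration $G/N \to G/M$ to be principal. Everything else reduces to matching $\Delta_{R,\pi_B}$ restricted to $P$ with the canonical map already analysed in Proposition \ref{prop:generalisedHG}.
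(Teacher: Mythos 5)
Your proof is correct and follows essentially the same route as the paper's: Proposition \ref{prop:generalisedHG} plus triviality of the left $\pi_P(A)$-coaction on the Hopf subalgebra $\pi_B(P)$ yields the Hopf--Galois property, and then Lemma \ref{lem:FF} together with \cite[Theorem I]{Schneider90} gives two-sided faithful flatness. In fact your identification $A \,\square_{\pi_P}\, \pi_B(P) = A^{\co(\pi_P(A))} \otimes \pi_B(P) = P \otimes \pi_B(P)$, justified via the identity $\pi_{B,P}(k) = \epsilon(k)1$ on coinvariants, is more careful than the paper's displayed equation $A \,\square_{\pi_P}\, \pi_B(P) = A \otimes \pi_B(P)$, whose right-hand side should read $P \otimes \pi_B(P)$ (the codomain of the Hopf--Galois canonical map).
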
 
\begin{proof}
It follows from Proposition \ref{prop:generalisedHG} that the canonical map induces an isomorphism 
$$
P \otimes_B P \to A \square_{\,\pi_P} \pi_B(P).
$$
However, $\pi_B(P)$ is a Hopf subalgebra of $\pi_B(A)$ by assumption, and so, it is trivial as a left $\pi_P(A)$-comodule, meaning that 
$$
A \square_{\,\pi_P} \pi_B(P) = A \otimes \pi_B(P).
$$
Thus $\mathrm{can}$ is an isomorphism, which is to say, $P$ is a Hopf--Galois extension of $B$. 

Faithful flatness of $P$ as a right $B$-module follows from Proposition \ref{lem:FF}. Recalling that $P$ is a Hopf--Galois extension of $B$, it now follows from \cite[Theorem I]{Schneider90} that $P$ is also faithfully flat as a left $B$-module. Thus the pair $(P,\Delta_{R,\pi_B(P)})$ is a principal comodule algebra as claimed.
\end{proof}

\begin{remark}\label{rem:PPOldandNew}
From Proposition \ref{prop:PPProp} it is clear that any principal $A$-pair $(B,P)$ the pair
$$
(\pi_P:A \to \pi_P(A), \pi_B:A \to \pi_B(A))
$$
is a principal pair in the original sense of \cite[Definition 3.2]{BWGrass}. Conversely, we see that for principal pair in the original sense of \cite{BWGrass} will give a principal pair in the sense of this paper. A subtle point is that the definition of \cite{BWGrass} assumes explicit choices of quantum subgroups $\pi_H:A \to H$ and $\pi_K:A \to K$, where no such choice is assumed in this paper. 
\end{remark}


\subsection{Schneider's Equivalence for Principal Pairs} \label{subsection:SchneidersEquiv}

In this subsection we look at Schneider's equivalence for the special case of principal pairs. We begin by introducing a new category. 

\begin{defn}
For $(B,P)$ a principal pair, the objects of the category ${}^A_P\mathrm{Mod}^{\pi_{B}(P)}$ are objects $\F$ in ${}_P\mathrm{Mod}^{\pi_B(P)}$ endowed with a left $A$-comodule structure $\Delta_{L,A}:\F \to A \otimes \F$ such that $\F$ is an $(A,P)$-relative Hopf module, and an $(A,\pi_B(P))$-bicomodule. The morphisms are left $A$-comodule, right $\pi_B(P)$-comodule, left $P$-module maps. 
\end{defn}

For any $\F \in {}^A_P\mathrm{Mod}^{\pi_B(P)}$, regarding $\F$ as an object in ${}_P\mathrm{Mod}^{\pi_B(P)}$, we consider the object $\F^{\co(\pi_B(P))} \in {}_B\mathrm{Mod}$. We introduce a left $A$-coaction 
\begin{align*}
\Delta_{L,A}: \F^{\co(\pi_B(P))} \to \F^{\co(\pi_B(P))}, & & f \mapsto f_{(-1)} \otimes f_{(0)},
\end{align*}
which is well defined because $\F$ is an $(A,\pi_B(P))$-bicomodule. This gives a functor from ${}^A_P\mathrm{Mod}^{\pi_B(P)}$ to ${}^A_B\mathrm{Mod}$, operating on morphisms just as for Schneider's equivalence, and which by abuse of notation we again denote by $(-)^{\co(\pi_B(P))}$. 

In the opposite direction, for any $\N \in {}^A_B\mathrm{Mod}$, we can endow $P \otimes_B \N$ with the tensor product left $A$-coaction, and the right $T$-coaction $\Delta_R \otimes \id$. This gives us a functor 
$$
P \otimes_B - : {}^A_B\mathrm{Mod} \to {}^A_P\mathrm{Mod}^{\pi_B(P)},
$$
which acts on morphisms just as for Schneider's equivalence.

\begin{prop}
The functors $(-)^{\co(\pi_B(P))}$ and $A \otimes_B -$ give an adjoint equivalence between ${}^A_P\mathrm{Mod}^{\pi_B(P)}$ and ${}^A_B\mathrm{Mod}$.
\end{prop}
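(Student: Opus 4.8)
The plan is to obtain this refined equivalence by decorating Schneider's descent theorem with the extra left $A$-comodule structure, in exactly the way that Proposition \ref{prop:PPTak} decorated Takeuchi's equivalence with a right $\pi_P(A)$-comodule structure. Since Proposition \ref{prop:PPProp} shows that $(P,\Delta_{R,\pi_B(P)})$ is a principal comodule algebra with base $B = P^{\co(\pi_B(P))}$, Schneider's descent theorem \cite[Theorem I]{Schneider90} already furnishes an equivalence between ${}_B\mathrm{Mod}$ and ${}_P\mathrm{Mod}^{\pi_B(P)}$ realised by the functors $P \otimes_B -$ and $(-)^{\co(\pi_B(P))}$, with the explicit unit and counit \eqref{eqn:Sunit} and \eqref{eqn:Scounit}. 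My strategy is to verify that both functors restrict to the refined categories and that these same unit and counit remain isomorphisms there, so that no new natural transformations need to be constructed.

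First I would confirm that the two functors are well defined on the refined categories, which largely repeats the observations made when they were introduced. For $\N \in {}^A_B\mathrm{Mod}$, the tensor product left $A$-coaction on $P \otimes_B \N$ and the right coaction $\Delta_R \otimes \id$ are compatible because $P$ is simultaneously a left $A$-comodule and a right $\pi_B(P)$-comodule algebra, and the relative Hopf module axioms for $\N$ then make $P \otimes_B \N$ an object of ${}^A_P\mathrm{Mod}^{\pi_B(P)}$. In the other direction, for $\F \in {}^A_P\mathrm{Mod}^{\pi_B(P)}$ the restriction of $\Delta_{L,A}$ to $\F^{\co(\pi_B(P))}$ is well defined precisely because $\F$ is an $(A,\pi_B(P))$-bicomodule, and the resulting object lies in ${}^A_B\mathrm{Mod}$ since $B = P^{\co(\pi_B(P))}$ acts on the coinvariants.

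The heart of the argument is to check that the Schneider unit $\unit:\M \to (P\otimes_B \M)^{\co(\pi_B(P))}$, $m \mapsto 1 \otimes m$, and counit $\counit:P \otimes_B \N^{\co(\pi_B(P))} \to \N$, $p \otimes n \mapsto pn$, are morphisms in the refined categories; the only genuinely new condition is compatibility with the left $A$-coaction, the left module and right $\pi_B(P)$-comodule compatibilities being inherited verbatim from Schneider's equivalence. For the unit this reduces to the identity $\Delta_{L,A}(1 \otimes m) = m_{(-1)} \otimes (1 \otimes m_{(0)}) = (\id \otimes \unit)\Delta_{L,A}(m)$, using $\Delta_{L,A}(1_P) = 1_A \otimes 1_P$, while for the counit it is $\Delta_{L,A}(pn) = p_{(-1)}n_{(-1)} \otimes p_{(0)}n_{(0)} = (\id \otimes \counit)\Delta_{L,A}(p\otimes n)$, which is nothing but the statement that $\N$ is an $(A,P)$-relative Hopf module. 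I would record these as two commuting square diagrams in the style of the proof of Proposition \ref{prop:PPTak}.

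Finally I would conclude: since $\unit$ and $\counit$ are already natural isomorphisms between the underlying ${}_B\mathrm{Mod}$ and ${}_P\mathrm{Mod}^{\pi_B(P)}$, and we have now shown that they additionally respect the left $A$-coaction, they are isomorphisms in the refined categories, with the triangle identities persisting because they hold on underlying objects. The main obstacle is not conceptual but bookkeeping, namely ensuring that all the module and comodule structures carried by $P \otimes_B \N$ are mutually compatible; beyond this the result follows entirely from the decoration technique of Proposition \ref{prop:PPTak} together with Schneider's descent theorem.
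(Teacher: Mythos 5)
Your proposal is correct and follows essentially the same route as the paper: the paper's proof likewise invokes the principality of the pair to apply Schneider's descent theorem and then reduces everything to checking that the unit $m \mapsto 1 \otimes m$ and counit $p \otimes n \mapsto pn$ are left $A$-comodule maps, exhibited as two commuting squares identical in content to your computations (the counit square being precisely the relative Hopf module condition on $\N$). The extra bookkeeping you include---well-definedness of the functors on the refined categories and persistence of the triangle identities---is handled in the paper by the discussion preceding the proposition, so nothing in your argument diverges from theirs.
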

\begin{proof}
To establish the proposition, we need only show that the unit and counit morphisms of Schneider's equivalence are left $A$-comodule maps. For the unit $\unit$ this follows from the commutativity of the following diagram
\begin{align*}
\xymatrix{ 
m \ar@{|->}[d]_{\Delta_{L,A}} \ar@{|->}[rr]^{\unit} & & 1\otimes m \ar@{|->}[d]^{\Delta_{L,A}\otimes\Delta_{L,A}}\\
m_{(0)}\otimes m_{(1)}\ar@{|->}[rr]_{ \id_A \otimes \unit} & & m_{(0)}\otimes 1\otimes m_{(1)}.
}
\end{align*}
while for the counit $\counit$ we have
\begin{align*}
\xymatrix{ 
p \otimes n \ar@{|->}[d]_{\Delta_{L,A}} \ar@{|->}[rr]^{\counit} & & pn \ar@{|->}[d]^{\Delta_{L,A}}\\
p_{(0)}n_{(0)}\otimes p_{(1)} \otimes n_{(1)} \ar@{|->}[rr]_{\id_A\otimes \counit} & & p_{(0)}n_{(0)}\otimes p_{(1)} n_{(1)}.
}
\end{align*}
Thus the functors $(-)^{\co(\pi_B(P))}$ and $A \otimes_B -$ give an adjoint equivalence as claimed.
\end{proof}

Combining this with Takeuchi's equivalence we immediately get the following corollary, showing that ${}^A_P\mathrm{Mod}^{\pi_B(P)}$ is in fact equivalent to the category of $\pi_B(P)$-comodules.

\begin{cor}\label{Cor:ScnEq}
An equivalence between the categories ${}^A_P\mathrm{Mod}^{\pi_B(P)}$ and ${}^{\pi_B}\mathrm{Mod}$ is given by the functors $\Phi \circ (-)^{\co(\pi_B(P))}$ and $( A \otimes_B -) \circ \Psi$.
\end{cor}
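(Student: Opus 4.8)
The plan is to recognise the two claimed functors as composites of adjoint equivalences that are already established in the preceding subsections, and then to invoke the elementary fact that a composite of adjoint equivalences is again an adjoint equivalence. Thus no new computation is really needed; the corollary is a formal consequence of stringing together Schneider's equivalence (in the refined form just proved) and Takeuchi's equivalence.

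Concretely, I would first recall the two equivalences to be composed. The preceding proposition gives an adjoint equivalence between ${}^A_P\mathrm{Mod}^{\pi_B(P)}$ and ${}^A_B\mathrm{Mod}$ implemented by $(-)^{\co(\pi_B(P))}$ and $A \otimes_B -$, whose unit and counit are the morphisms $\unit$ and $\counit$ of Schneider's equivalence, now verified to be left $A$-comodule maps. Takeuchi's equivalence, recalled in Section 2, gives an adjoint equivalence between ${}^A_B\mathrm{Mod}$ and ${}^{\pi_B}\mathrm{Mod}$ implemented by $\Phi$ and $\Psi$, with unit $\unit(f) = f_{(-1)} \otimes [f_{(0)}]$ and counit $\counit = \e \otimes \id$. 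Since the category ${}^A_B\mathrm{Mod}$ is simultaneously the codomain of the first pair and the domain of Takeuchi's pair, the two equivalences compose: in the forward direction one obtains $\Phi \circ (-)^{\co(\pi_B(P))}$ and in the reverse direction $(A \otimes_B -) \circ \Psi$, exactly as asserted. The unit and counit of the composite equivalence are produced by the standard whiskering construction from the two component units and counits; as each component transformation is a natural isomorphism, so are the whiskered composites, and hence $\Phi \circ (-)^{\co(\pi_B(P))}$ and $(A \otimes_B -) \circ \Psi$ are mutually quasi-inverse.

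There is no substantive obstacle here, since the content of the corollary is entirely formal once the two component equivalences are in hand. The only point worth noting explicitly is that each composite functor lands in the stated category, namely that $\Phi \circ (-)^{\co(\pi_B(P))}$ really produces a left $\pi_B(A)$-comodule and that $(A \otimes_B -) \circ \Psi$ really produces an object of ${}^A_P\mathrm{Mod}^{\pi_B(P)}$; but both facts are immediate from the codomains of the individual functors, so no further verification is required. I would therefore present the proof as a one-line appeal to functoriality of adjoint equivalences, with the composite unit and counit recorded for the reader's convenience.
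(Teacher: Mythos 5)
Your proposal is correct and is exactly the paper's argument: the corollary is stated there as an immediate consequence of composing the preceding proposition (Schneider's equivalence for principal pairs, with unit and counit shown to be left $A$-comodule maps) with Takeuchi's equivalence, which is precisely your composition-of-adjoint-equivalences argument. The extra detail you record about whiskering the units and counits is a harmless elaboration of what the paper leaves implicit.
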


\section{Group Algebra Principal Pairs and Relative Line Modules}\label{section:GAPP}

In this section we introduce a special type of principal pair called a \emph{group algebra principal pair}. This is motivated by the classical special case of a homogeneous principal fibration of the form $G/L^{\mathrm{s}} \to G/L$, where $L$ is a reductive subgroup, and $L^{\mathrm{s}}$ is the semisimple part of $L$. In this special case, $P$ decomposes into a direct sum of relative line modules, and moreover, every relative line module over the base $B$ is of this form.

\subsection{Group Algebra Principal Pairs}

For any Hopf algebra $H$ we denote by $\Gamma_H$ the group of grouplike elements of $H$. Moreover, we denote by $\mathbb{C}\Gamma_H$ the group algebra of $\Gamma_H$. Note that $\mathbb{C}\Gamma^+_H$ is a subalgebra of $\mathbb{C}\Gamma_H$, closed under the antipode, and indeed a two-sided coideal. Thus the quotient space
$$
K_H := H/\langle \mathbb{C}\Gamma^+_H \rangle = H/\langle g - 1 \,|\, g \in \Gamma_H \rangle
$$
inherits from $H$ the structure of a Hopf algebra. We denote by $\proj_{K}:H \to K_H$ the canonical Hopf algebra projection.

Let us now specialise to the case where the Hopf algebra $H$ is cosemisimple with Peter--Weyl decomposition $H \simeq \bigoplus_{\alpha \in \widehat{H}} H_{\alpha}$. Consider the following equivalence relation on $\widehat{H}$. For $\alpha, \, \alpha' \in \widehat{H}$, we say that $\alpha \sim \alpha'$ if 
$$
\mathbb{C}\Gamma_H H_{\alpha} = \mathbb{C} \Gamma_H H_{\alpha'}. 
$$
We denote by $\widehat{H}_{\Gamma}$ the associated set of equivalence classes of elements of $\widehat{H}$, and write
$$
H_{[\alpha]} := \mathbb{C} \Gamma_H H_{\alpha}.
$$
We call the coarser decomposition 
$$
H \simeq \bigoplus_{[\alpha] \in \widehat{H}_{\Gamma}} H_{[\alpha]}
$$
the \emph{$\Gamma$-decomposition} of $H$. Note that this is a decomposition of $H$ as left $\mathbb{C}\Gamma^+_H$ module.

\begin{lem} \label{lem:GPW}
Let $H = \bigoplus_{\alpha \in \widehat{H}} H_{\alpha}$ be a cosemisimple Hopf algebra, and assume that $\mathbb{C}\Gamma_H \subseteq Z_H$, where $Z_H$ denotes the center of $H$. A direct sum decomposition of $K_H$ into subcoalgebras is given by 
\begin{align} \label{eqn:GPW}
K_H \simeq \bigoplus_{[\alpha] \in \widehat{H}_{\Gamma}}\proj_K(H_{[\alpha]})= \bigoplus_{[\alpha] \in \widehat{H}_{\Gamma}} K_{[\alpha]},
\end{align}
where for each $[\alpha] \in \widehat{H}_{\Gamma},$ we denote $K_{H_{[\alpha]}}:=\proj_K(H_{[\alpha]}).$ 
\end{lem}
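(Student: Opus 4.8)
The plan is to reduce everything to understanding how the grouplike elements act on the Peter--Weyl blocks $H_\alpha$ by left multiplication. First I would observe that for any $g \in \Gamma_H$ the map $\phi_g : H \to H$, $x \mapsto gx$, is a coalgebra automorphism: since $g$ is grouplike, $\Delta(gx) = (g \otimes g)\Delta(x) = \sum gx_{(1)} \otimes gx_{(2)}$, so $\phi_g$ is a coalgebra map, and it is invertible with inverse $\phi_{g^{-1}}$. As the $H_\alpha$ are precisely the simple subcoalgebras of the cosemisimple coalgebra $H$, and a coalgebra automorphism permutes simple subcoalgebras, each $gH_\alpha$ equals some $H_{\alpha'}$. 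Hence $H_{[\alpha]} = \mathbb{C}\Gamma_H H_\alpha = \sum_{g \in \Gamma_H} gH_\alpha$ is exactly the sum of the blocks in the $\Gamma_H$-orbit of $H_\alpha$, i.e.\ a direct sum of simple subcoalgebras and so itself a subcoalgebra. In particular the equivalence classes $[\alpha]$ are the orbits, and the $\Gamma$-decomposition $H = \bigoplus_{[\alpha]} H_{[\alpha]}$ is a direct sum of subcoalgebras.

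Next, the centrality assumption $\mathbb{C}\Gamma_H \subseteq Z_H$ enters: it makes each $g-1$ central, so the kernel of $\proj_K$, which by definition is the ideal generated by $\{g-1 : g \in \Gamma_H\}$, equals the one-sided space $\sum_{g \in \Gamma_H}(g-1)H$. I would then show this kernel respects the $\Gamma$-decomposition. Since $g$ permutes the blocks within each orbit we have $gH_{[\alpha]} = H_{[\alpha]}$, whence $(g-1)H_{[\alpha]} \subseteq H_{[\alpha]}$. Setting $N_{[\alpha]} := \sum_{g}(g-1)H_{[\alpha]} \subseteq H_{[\alpha]}$ and using $H = \bigoplus_{[\alpha]} H_{[\alpha]}$, we obtain
$$
\ker \proj_K = \sum_{g \in \Gamma_H}(g-1)H = \bigoplus_{[\alpha] \in \widehat{H}_{\Gamma}} N_{[\alpha]},
$$
the sum being direct simply because $N_{[\alpha]} \subseteq H_{[\alpha]}$ and the $H_{[\alpha]}$ are independent; in particular $N_{[\alpha]} = \ker \proj_K \cap H_{[\alpha]}$.

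Finally I would pass to the quotient blockwise. From $H = \bigoplus_{[\alpha]} H_{[\alpha]}$ and $\ker \proj_K = \bigoplus_{[\alpha]} N_{[\alpha]}$ with $N_{[\alpha]} \subseteq H_{[\alpha]}$, the standard isomorphism for quotients of direct sums gives
$$
K_H = H/\ker \proj_K \cong \bigoplus_{[\alpha] \in \widehat{H}_{\Gamma}} H_{[\alpha]}/N_{[\alpha]} = \bigoplus_{[\alpha] \in \widehat{H}_{\Gamma}} \proj_K(H_{[\alpha]}) = \bigoplus_{[\alpha] \in \widehat{H}_{\Gamma}} K_{[\alpha]}.
$$
Each summand $K_{[\alpha]} = \proj_K(H_{[\alpha]})$ is a subcoalgebra of $K_H$, being the image of the subcoalgebra $H_{[\alpha]}$ under the coalgebra map $\proj_K$, which yields the asserted decomposition into subcoalgebras.

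The one genuinely load-bearing step is the compatibility of $\ker \proj_K$ with the $\Gamma$-decomposition, and this is exactly where centrality is indispensable: it is what lets us replace the two-sided ideal generated by the elements $g-1$ by the manifestly orbit-preserving one-sided expression $\sum_{g}(g-1)H$. Everything else is bookkeeping with simple subcoalgebras of a cosemisimple coalgebra.
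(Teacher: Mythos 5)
Your proof is correct and follows essentially the same route as the paper's: centrality of $\Gamma_H$ is used to show that $\ker(\proj_K)$, the ideal generated by $\mathbb{C}\Gamma_H^+$, is homogeneous with respect to the $\Gamma$-decomposition, so that decomposition descends to the quotient $K_H$, each summand being a subcoalgebra as the image of a subcoalgebra under the coalgebra map $\proj_K$. You simply supply details the paper's terser proof leaves implicit, namely the orbit argument showing each $H_{[\alpha]}$ is a direct sum of simple subcoalgebras and the explicit blockwise splitting $\ker(\proj_K)=\bigoplus_{[\alpha]}\left(\ker(\proj_K)\cap H_{[\alpha]}\right)$.
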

\begin{proof}
Note first that since $\Gamma_H$ is central in $H$, the ideal generated by $\Gamma_H^+$ is homogeneous with respect to the $\Gamma$-Peter--Weyl decomposition, and hence the $\Gamma$-Peter--Weyl decomposition descends to the quotient. Since, for any $\alpha \in \widehat{H}$, we have 
$$
\proj_K(\mathbb{C}\Gamma_H H_{\alpha}) = \proj_K(H_{\alpha}),
$$
we see that the decomposition is given explicitly by \eqref{eqn:GPW}. Finally, note that projection onto the identity component defines a Haar functional proving that $K_H$ is cosemisimple. 
\end{proof}

\begin{thm} \label{thm:GAPP}
Let $A$ and $H$ be Hopf algebras, $\pi_H:A \to H$ a surjective Hopf algebra map, and $\proj_{K}:H \to K_H$ the canonical projection. If $H$ is cosemisimple and $\Gamma_H \subseteq Z_H$ then
\begin{align} \label{eqn:GAPPIdentity}
\pi_H(P) = \pi_H(A)^{\co(K_H)} = \mathbb{C}\Gamma_H.
\end{align}
Moreover, the pair 
$$
\big(B := A^{\co(H)}, \, P := A^{\co(K_H)}\big)
$$
is a principal $A$-pair. 
\end{thm}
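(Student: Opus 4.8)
The plan is to verify directly the two defining conditions of a principal $A$-pair for $(B,P)$: first, that $B \subseteq P$ is a nested pair of quantum homogeneous $A$-spaces, and second, that the fiber $\pi_B(A)^{\co(\pi_P(A))} = \pi_B(P)$ is a Hopf subalgebra of $\pi_B(A)$. The cosemisimplicity hypotheses will be used to supply the quantum homogeneous space structure, whereas the centrality of $\Gamma_H$ is exactly what pins the fiber down as $\mathbb{C}\Gamma_H$.

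First I would establish that $B = A^{\co(H)}$ and $P = A^{\co(K_H)}$ are quantum homogeneous $A$-spaces. Both are left coideal subalgebras, arising as the coinvariants of the Hopf algebra surjections $\pi_H : A \to H$ and $\proj_K \circ \pi_H : A \to K_H$. Since $H$ is cosemisimple by hypothesis and $K_H$ is cosemisimple by Lemma \ref{lem:GPW}, the faithful flatness of $A$ together with the conormality conditions $B^+A = AB^+$ and $P^+A = AP^+$ follow from the cosemisimplicity hypotheses via the results recalled in Appendix \ref{app:QHSIdealsQSGs}; in particular one obtains the identifications $\pi_B(A) \cong H$ and $\pi_P(A) \cong K_H$, under which $\pi_{B,P} : \pi_B(A) \to \pi_P(A)$ becomes $\proj_K$. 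Because $K_H$ is a quotient of $H$, every $H$-coinvariant is automatically $K_H$-coinvariant, yielding $B \subseteq P$ and hence a nested pair.

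The heart of the argument is the computation of the fiber. By Proposition \ref{prop:NestedPairs}, part 2, it suffices to compute $\pi_B(A)^{\co(\pi_P(A))} = H^{\co(K_H)}$, the coinvariants of the right coaction $\delta := (\id \otimes \proj_K)\Delta$ on $H$. Here I would exploit the $\Gamma$-decomposition: since $\Gamma_H$ is central, $H = \bigoplus_{[\alpha]} H_{[\alpha]}$ is a decomposition into subcoalgebras, and by Lemma \ref{lem:GPW} it descends to a subcoalgebra decomposition $K_H = \bigoplus_{[\alpha]} K_{H_{[\alpha]}}$. Consequently $\delta$ is graded, $\delta(H_{[\alpha]}) \subseteq H_{[\alpha]} \otimes K_{H_{[\alpha]}}$, while $1_{K_H}$ is concentrated in the single summand $K_{H_{[e]}}$ indexed by the class $[e]$ of the grouplikes, for which $H_{[e]} = \mathbb{C}\Gamma_H$. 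Writing a coinvariant $h = \sum_{[\alpha]} h_{[\alpha]}$ and comparing components in the bigraded decomposition $H \otimes K_H = \bigoplus_{[\alpha],[\beta]} H_{[\alpha]} \otimes K_{H_{[\beta]}}$ of the equation $\delta(h) = h \otimes 1_{K_H}$ forces $h_{[\alpha]} = 0$ for every $[\alpha] \neq [e]$, so $h \in \mathbb{C}\Gamma_H$. The reverse inclusion is immediate, since each grouplike $g$ satisfies $\delta(g) = g \otimes \proj_K(g) = g \otimes 1_{K_H}$. This gives $H^{\co(K_H)} = \mathbb{C}\Gamma_H$, and, under the identification $\pi_B(A) \cong H$, the full identity \eqref{eqn:GAPPIdentity}, namely $\pi_H(P) = \pi_H(A)^{\co(K_H)} = \mathbb{C}\Gamma_H$.

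Finally, since $\mathbb{C}\Gamma_H$ is the group algebra of the grouplike elements of $H$, it is automatically a Hopf subalgebra of $H \cong \pi_B(A)$, so $\pi_B(P) = \pi_B(A)^{\co(\pi_P(A))}$ is a Hopf subalgebra of $\pi_B(A)$, which is precisely the condition defining a principal $A$-pair; by Proposition \ref{prop:PPProp} this further exhibits $P$ as a principal $\mathbb{C}\Gamma_H$-comodule algebra with base $B$. The main obstacle I anticipate is the bookkeeping in the fiber computation, and specifically the justification that $\delta$ is graded and that $1_{K_H}$ is concentrated in the trivial summand $K_{H_{[e]}}$; both of these rest essentially on the centrality of $\Gamma_H$ (without which the $\Gamma$-decomposition need not be a coalgebra decomposition). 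By contrast, verifying the quantum homogeneous space axioms is routine given the cosemisimplicity of $H$ and $K_H$ and the results of the appendix.
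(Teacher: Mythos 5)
Your proposal is correct and follows essentially the same route as the paper's proof: quantum homogeneous space structure from cosemisimplicity of $H$ and of $K_H$ (via Lemma \ref{lem:GPW}), the first equality of \eqref{eqn:GAPPIdentity} from Proposition \ref{prop:NestedPairs}, the fiber computation $H^{\co(K_H)} = \mathbb{C}\Gamma_H$ by exploiting homogeneity of the coaction with respect to a central-grouplike-induced decomposition, and the observation that $\mathbb{C}\Gamma_H$ is automatically a Hopf subalgebra. The only cosmetic difference is that you run the coinvariance argument over the coarser $\Gamma$-decomposition $H = \bigoplus_{[\alpha]} H_{[\alpha]}$ (spelling out the component comparison against $1_{K_H} \in K_{[e]}$), whereas the paper phrases it through the finer Peter--Weyl decomposition; the underlying mechanism is identical.
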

\begin{proof}
Since $H$ is cosemisimple, the space of coinvariants associated to $\pi_H$ is a quantum homogeneous space. Moreover, since Lemma \ref{lem:GPW} tells us that $K_H$ is also cosemisimple, the space of coinvariants associated to $\pi_{K_H}$ is also a quantum homogeneous space. Thus the pair $(B,P)$ is a nested pair of quantum homogeneous $A$-spaces.

Let us now establish \eqref{eqn:GAPPIdentity}. The first equality follows directly from Lemma \ref{prop:NestedPairs}. For the second equality, note first that the construction of $K_H$ implies the inclusion
\begin{align*}
\Delta_{R,K_H}(H_{\alpha}) \subseteq H_{\alpha} \otimes K_{H,\alpha}. 
\end{align*}
Thus we see that a non-zero element of $H_{\alpha}$ is right $K_H$-coinvariant if and only if $\alpha$ is equivalent to the trivial representation. Now since $H^{\co(K_H)}$ is homogeneous with respect to the Peter--Weyl decomposition of $H$, we have that $H^{\co(K_H)} = \mathbb{C}\Gamma_H$. So in particular, $H^{\co(K_H)}$ is a Hopf subalgebra of $H$, which is to say, the nested pair is a principal pair.
\end{proof}

We now give a name to the principal pairs considered in the theorem, which we find convenient to present as a separate definition.

\begin{defn}
Let $A$ be a Hopf algebra, $H$ a cosemisimple Hopf algebra such that $\Gamma_H \subseteq Z_H$, and $\pi:A \to H$ a surjective Hopf algebra map. We call the associated principal pair $(B = A^{\co(H)},\, P = A^{\co(K_H)})$ a \emph{group algebra principal pair}.
\end{defn}

\subsection{Relative Line Modules and Group Algebra Principal Pairs}

A $\mathbb{C}\Gamma_H$-comodule algebra structure on $P$ is equivalent to a $\Gamma_H$-algebra grading of $P$, which we denote by 
\begin{align} \label{eqn:PDecomp}
P \simeq \bigoplus_{\gamma \in \Gamma_H} \EE_{\gamma}.
\end{align}

\begin{prop} \label{lem:GAPP}
 For a group algebra principal pair
$$
\left(B = A^{\co(H)}, P = A^{\co(K_H)}\right)
$$
we have the following results:
\begin{enumerate}
 \item The $\Gamma_H$-grading of $P$ is \emph{strong}, which is to say, $\EE_{\gamma}\EE_{\gamma'} = \EE_{\gamma\gamma'}$, for all $\gamma,\gamma' \in \Gamma_H$.
 \item Each $\EE_{\gamma}$ is a relative line module over $B = \EE_0$, in particular  $\Phi(\EE_{\gamma}) = \pi_B(\EE_{\gamma}) = \mathbb{C}\gamma$.
 \item Every relative line module $\EE$ over $B$ is isomorphic to $\EE_{\gamma}$, for some $\gamma \in \Gamma_H$.
 \item The decomposition in \eqref{eqn:PDecomp} can equivalently be  described as the unique decomposition of $P$ into its simple sub-objects.
\end{enumerate} 
\end{prop}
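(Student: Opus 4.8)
The plan is to derive all four statements from Takeuchi's equivalence together with the principal comodule algebra structure of Theorem \ref{thm:GAPP}, using throughout the identifications $\pi_B(A) \cong H$ (valid since $H$ is cosemisimple, so that $B^+A = \ker \pi_H$ and hence ${}^{\pi_B}\mathrm{Mod}$ is the category of left $H$-comodules) and $\pi_B(P) = \mathbb{C}\Gamma_H$. The most efficient order is to establish the computation $\Phi(\EE_\gamma) = \mathbb{C}\gamma$ first, since it feeds into all of (2), (3), (4).

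First I would record that each $\EE_\gamma$ is an object of ${}^A_B\mathrm{Mod}$. It is a left $B$-submodule because $B = \EE_0$ (by the Lemma preceding Proposition \ref{prop:PPProp}, $\EE_0 = P^{\co(\mathbb{C}\Gamma_H)} = B$) and strongness will give $\EE_0 \EE_\gamma \subseteq \EE_\gamma$; and it is a left $A$-subcomodule because the left $A$-coaction and the right $\mathbb{C}\Gamma_H$-coaction on $P$ commute, both being corestrictions of $\Delta$, so applying coassociativity to $p \in \EE_\gamma$ gives $\Delta_{R,\pi_B}(p_{(0)}) = p_{(0)} \otimes \gamma$, i.e. $\Delta_L(\EE_\gamma) \subseteq A \otimes \EE_\gamma$. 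To compute $\Phi(\EE_\gamma)$ I would invoke the isomorphism $\Phi(P) \cong \pi_B(P) = \mathbb{C}\Gamma_H$, $[p] \mapsto \pi_B(p)$, of Proposition \ref{prop:NestedPairs}. Applying $\e \otimes \id$ to $\Delta_{R,\pi_B}(p) = p_{(1)} \otimes \pi_B(p_{(2)}) = p \otimes \gamma$ gives $\pi_B(p) = \e(p)\gamma$, so $\pi_B(\EE_\gamma) \subseteq \mathbb{C}\gamma$; since $\Phi$ is additive and $\bigoplus_\gamma \pi_B(\EE_\gamma) = \pi_B(P) = \bigoplus_\gamma \mathbb{C}\gamma$, each inclusion is an equality. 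Thus $\Phi(\EE_\gamma) \cong \mathbb{C}\gamma$ is one-dimensional, and by the criterion of \textsection 2.4 (invertible $\Leftrightarrow$ dimension one) each $\EE_\gamma$ is a relative line module, giving (2).

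For (1), the inclusion $\EE_\gamma \EE_{\gamma'} \subseteq \EE_{\gamma\gamma'}$ is immediate because $\Delta_{R,\pi_B}$ is an algebra map. For the reverse inclusion I would invoke the classical equivalence, for a group-algebra coaction, between the Hopf--Galois property and strongness of the grading: a $\Gamma_H$-graded algebra is strongly graded exactly when it is a $\mathbb{C}\Gamma_H$-Hopf--Galois extension of its degree-zero part. Since Proposition \ref{prop:PPProp} tells us $P$ is precisely such a Hopf--Galois extension of $B = \EE_0$, the grading is strong. (Equivalently, one extracts $1 \in \EE_\gamma \EE_{\gamma^{-1}}$ directly from the components of a principal $\ell$-map.) For (3), Takeuchi's equivalence is monoidal, so relative line modules over $B$ correspond bijectively to one-dimensional objects of ${}^{\pi_B}\mathrm{Mod} = {}^H\mathrm{Mod}$; a one-dimensional left $H$-comodule is given by a grouplike element, so these are classified by $\Gamma_H$, and under $\Phi$ the module $\EE_\gamma$ corresponds to the comodule $\mathbb{C}\gamma$ attached to the grouplike $\gamma$ (coaction $\gamma \mapsto \gamma \otimes \gamma$). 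Hence for any relative line module $\EE$, $\Phi(\EE) \cong \mathbb{C}\gamma$ for a unique $\gamma \in \Gamma_H$, and the unit of Takeuchi's equivalence gives $\EE \cong \Psi(\Phi(\EE)) \cong \EE_\gamma$. Finally, for (4), I would use that $H$ is cosemisimple, so ${}^H\mathrm{Mod}$, and hence the equivalent category ${}^A_B\mathrm{Mod}$, is semisimple; each $\EE_\gamma$ is simple (its image $\mathbb{C}\gamma$ under $\Phi$ is one-dimensional), and the $\EE_\gamma$ are pairwise non-isomorphic since distinct grouplikes give non-isomorphic comodules. Thus $P$ is a multiplicity-free semisimple object whose isotypic components are the $\EE_\gamma$, and since isotypic components are canonically determined subobjects, \eqref{eqn:PDecomp} is the unique decomposition of $P$ into simple sub-objects.

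I expect the main obstacle to be (1): the forward inclusion is formal, but strongness genuinely requires the Hopf--Galois property of the principal pair together with the special feature of a group-algebra coaction, rather than Takeuchi's equivalence alone. The remaining parts are essentially bookkeeping once the identifications $\pi_B(A) \cong H$ and $\pi_B(P) = \mathbb{C}\Gamma_H$, the additivity and monoidality of $\Phi$, and the cosemisimplicity of $H$ are in place.
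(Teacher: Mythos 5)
Your proposal is correct and follows essentially the same route as the paper's proof: strongness of the grading is deduced from the Hopf--Galois property via the classical graded-ring equivalence (\cite[Theorem 8.1.7]{Monty}), the line-module property comes from $\Phi(\EE_\gamma) = \mathbb{C}\gamma$ together with the dimension-one criterion, the classification in part 3 comes from identifying one-dimensional $H$-comodules with grouplikes, and uniqueness of the decomposition follows from simplicity and pairwise non-isomorphism of the $\EE_\gamma$. The only cosmetic difference is in part 3, where the paper embeds the one-dimensional comodule $\Phi(\EE)$ into $H$ (hence into $\mathbb{C}\Gamma_H$) rather than invoking the grouplike classification directly; the two arguments are interchangeable.
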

\begin{proof}
~~~\\
1. Since $P$ is a Hopf--Galois extension of $B$, it necessarily follows that $P$ is strongly graded (see \cite[Theorem 8.1.7]{Monty}). 

2. It is clear from the construction of $\EE_{\gamma}$ that $\Phi(\EE_{\gamma}) = \pi_B(\EE_{\gamma}) = \mathbb{C}\gamma$, and so, $\EE_{\gamma}$ is a relative line module.

3. Every simple $H$-comodule is embeddable into $H$. Since $\Phi_B(\EE)$ is a one-dimensional $H$-comodule by assumption,  it is embeddable into $\mathbb{C}\Gamma_H$. Thus we must have that an isomorphism $\Phi(\EE) \simeq \Phi(\EE_\gamma)$, for some $\gamma \in \Gamma_H$, or equivalently it must hold that $\EE \simeq \EE_\gamma$. 

4. Since each $\EE_{\gamma}$ is simple, the decomposition is indeed a decomposition of $P$ into simple subobjects. The relative line modules are all distinct since $\Phi(\EE_{\gamma})$ is not isomorphic to $\Phi(\EE_{\gamma'})$ when $\gamma \neq \gamma'$. Thus we see the decomposition of $P$ into relative line modules is the unique decomposition into simple sub-objects.
\end{proof}

\begin{cor}\label{cor:cosetrep}
It holds that $\Phi(\EE_{\alpha}) = \mathbb{C}[e]$, if and only if $e \not \in \EE_k^+ = \EE_k \cap A^+$\!.
\end{cor}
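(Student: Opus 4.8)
The plan is to reduce the corollary to the identity $B^+\EE_\gamma = \EE_\gamma \cap A^+$ of two codimension-one subspaces of the relative line module $\EE_\gamma$; here I write $\gamma \in \Gamma_H$ for the single grouplike element indexing the module, so that $\EE_\alpha = \EE_k = \EE_\gamma$ in the notation of the statement. Recall that, by definition of the functor $\Phi$, we have $\Phi(\EE_\gamma) = \EE_\gamma/B^+\EE_\gamma$, and that by Proposition \ref{lem:GAPP} this quotient is one-dimensional, spanned by the class $[e]$ of any $e \in \EE_\gamma$ with $e \notin B^+\EE_\gamma$. Hence $\Phi(\EE_\gamma) = \mathbb{C}[e]$ holds precisely when $[e] \neq 0$, that is, when $e \notin B^+\EE_\gamma$. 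The entire content of the corollary is therefore that the defining subspace $B^+\EE_\gamma$ coincides with the more concretely described subspace $\EE_\gamma^+ = \EE_\gamma \cap A^+$.

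First I would record the trivial inclusion $B^+\EE_\gamma \subseteq \EE_\gamma \cap A^+$: any element of $B^+\EE_\gamma$ clearly lies in $\EE_\gamma$, and since $B^+ = B \cap \ker(\e)$ we have $\e(be) = \e(b)\e(e) = 0$ for $b \in B^+$, $e \in \EE_\gamma$, so it also lies in $A^+ = \ker(\e)$.

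The reverse inclusion I would deduce by comparing dimensions, and the main obstacle is to rule out the degenerate possibility $\EE_\gamma \subseteq A^+$, which would make $\EE_\gamma \cap A^+$ all of $\EE_\gamma$ rather than a codimension-one subspace. To exclude this I would use the strong grading from Proposition \ref{lem:GAPP}: since $\EE_\gamma \EE_{\gamma^{-1}} = B$ contains the unit $1$, we may write $1 = \sum_i e_i f_i$ with $e_i \in \EE_\gamma$, $f_i \in \EE_{\gamma^{-1}}$, and applying the counit gives $\sum_i \e(e_i)\e(f_i) = \e(1) = 1 \neq 0$, so $\e(e_i) \neq 0$ for at least one $i$. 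Thus $\e|_{\EE_\gamma} \neq 0$, and $\EE_\gamma \cap A^+ = \ker(\e|_{\EE_\gamma})$ has codimension exactly one in $\EE_\gamma$.

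Finally I would combine the three facts: $B^+\EE_\gamma \subseteq \EE_\gamma \cap A^+$, that $B^+\EE_\gamma$ has codimension one in $\EE_\gamma$ (as $\Phi(\EE_\gamma)$ is one-dimensional), and that $\EE_\gamma \cap A^+$ likewise has codimension one. An inclusion of two codimension-one subspaces of $\EE_\gamma$ must be an equality, so $B^+\EE_\gamma = \EE_\gamma \cap A^+$. Substituting this into the reformulation of the first paragraph yields $\Phi(\EE_\gamma) = \mathbb{C}[e]$ if and only if $e \notin \EE_\gamma^+$, as required.
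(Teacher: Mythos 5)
Your proof is correct, and it rests on the same two pillars as the paper's own argument --- the strong $\Gamma_H$-grading from Proposition \ref{lem:GAPP} and multiplicativity of the counit --- but it closes the harder inclusion by a different device. The paper argues element-wise in both directions: for $e \in \EE_\gamma^+$ it writes $1 = \sum_i e_i e_i'$ with $e_i \in \EE_{-\gamma}$, $e_i' \in \EE_\gamma$, and expands $[e] = \sum_i [(ee_i)e_i'] = 0$ using $ee_i \in B^+$; for $e \notin \EE_\gamma^+$ it derives a contradiction from $[e]=0$ exactly as in your ``trivial inclusion'' (stated there in contrapositive form). You replace the first computation by a codimension count: $B^+\EE_\gamma$ has codimension one in $\EE_\gamma$ because $\Phi(\EE_\gamma)$ is one-dimensional, $\EE_\gamma \cap A^+$ has codimension exactly one because $\e|_{\EE_\gamma} \neq 0$ (which you obtain by applying $\e$ to the same decomposition of unity that the paper instead multiplies against $e$), and a containment between two hyperplanes forces equality. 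The trade-off is mild but real: your route invokes the one-dimensionality of $\Phi(\EE_\gamma)$ for both directions, whereas the paper's multiplication trick establishes $\EE_\gamma^+ \subseteq B^+\EE_\gamma$ without it (and could in principle be used before knowing $\dim \Phi(\EE_\gamma) = 1$); in exchange, your reformulation makes explicit the cleaner statement that the corollary is precisely the subspace identity $B^+\EE_\gamma = \EE_\gamma \cap A^+$, which the paper's proof leaves implicit.
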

\begin{proof}
Since the $\Gamma_H$-grading is strong, there exists a sum $\sum_{i} e_i \otimes e'_i$ in $\EE_{-\gamma} \otimes_B \EE_{\gamma}$ such that 
$
\sum_i e_ie_i' = 1,
$
for any $\gamma \in \Gamma_H$. Thus, for any $e \in \EE^+_\gamma$, we have 
$$
[e] = \sum_i [ee_ie_i'] = 0,
$$
where the second equality follows from the fact that $ee_i \in B^+$.

Consider next an element $e \notin \EE_k^+$. The coset $[e] = 0$ if and only only if $e = \sum_i b_ie_i$, for some $b_i \in B^+, \, e_i \in \EE_{\gamma}$. But in this case 
$$
\e(e) = \sum_i \e(b_i)\e(e_i) = 0,
$$
contradicting our assumption that $e \not \in \EE^+_{\gamma}$. Thus $\Phi(\EE_{\gamma})$ is spanned by $[e]$.
\end{proof}


\begin{eg} \label{eg:degGAPP}
Let $A$ and $H$ be Hopf algebras, and $\pi:A \to H$ a surjective Hopf algebra map. If $H$ is abelian and satisfies $H = \mathbb{C}\Gamma_H$, then $\pi:A \to H$ clearly satisfies the requirements of Theorem \ref{thm:GAPP}, and so, we have a group algebra principal pair. In this case $K_H \simeq \mathbb{C}1$, implying that $A = P^{\co(K_H)}$, and giving us the strong $\Gamma_H$-algebra grading 
$$
A \simeq \bigoplus_{\gamma \in \Gamma_H} \EE_{\gamma},
$$
in the category ${}^A_B\mathrm{Mod}_0$. Thus every object in ${}^A_B\mathrm{Mod}_0$ is a direct sum of relative line modules. This is the situation for the full quantum flag manifolds $\OO_q(G/\mathbb{T}^{r})$ presented in \textsection \ref{section:QFMs} below.
\end{eg}

\begin{eg}
The case where where $\Gamma_H = 1$ again gives a group algebra principal pair, but in this case $K_H$ is clearly isomorphic to $H$, meaning that 
$$
P = A^{\co(K_H)} = A^{\co(H)} = B,
$$
which is to say, the Hopf--Galois extension is trivial.
\end{eg}

\subsection{Schneider's Equivalence for Group Algebra Principal Pairs}\label{subsection:SchneideGAPP}

We finish this section with some observations about Schneider's equivalence in the special setting of group algebra principal pairs. Recalling that comodules over $\mathbb{C}\Gamma_H$ are the same as graded modules over the group $\Gamma_H$, we see that Schneider's equivalence gives us
$$
{}_P \mathrm{Mod}^{\mathrm{gr}(\Gamma_H)} \simeq {}_B\mathrm{Mod}.
$$
Moreover, the left $A$-covariant version of Schneider's equivalence gives us 
\begin{align} \label{eqn:ASchneiderGAPP}
{}^A_P \mathrm{Mod}^{\mathrm{gr}(\Gamma_H)} \simeq {}^A_B\mathrm{Mod} \simeq {}^{K_H}\mathrm{Mod},
\end{align}
which is to say, $\Gamma_H$-graded $(A,P)$-relative Hopf modules are equivalent to $K_H$-comodules.

\section{Quantum Flag Manifolds} \label{section:QFMs}

In this section we give a principal pair description of the quantum flag manifolds. To set notation, we begin by recalling the necessary definitions and results of the Drinfeld--Jimbo quantum enveloping algebras and quantum coordinate algebras, as well as the quantum flag manifolds $\OO_q(G/L_S)$ and their associated quantum Poisson homogeneous spaces $\OO_q(G/L^{\mathrm{s}}_S)$ introduced by Dijkhuizen and Stokman \cite{DijkStok}. We then present the pair $\OO_q(G/L_S)$ and $\OO_q(G/L^{\mathrm{s}}_S)$ as a group algebra principal pair. This result is then in turn used to establish a number of categorical equivalences, to give a complete description of the relative line modules over $\OO_q(G/L_S)$, and to construct an explicit principal $\ell$-map.

\subsection{Drinfeld--Jimbo Quantum Groups} \label{subsection:DJQG}

Let $\frak{g}$ be a finite-dimensional complex semisimple Lie algebra of rank $r$. We fix a Cartan subalgebra $\frak{h}$ and choose a set of simple roots $\Pi = \{\alpha_1, \dots, \alpha_r\}$ for the corresponding root system in $\frak{g}^*$, where $\frak{g}^*$ denotes the linear dual of $\frak{g}$. We denote by $(\cdot,\cdot)$ the symmetric bilinear form induced on $\frak{h}^*$ by the Killing form of $\frak{g}$, normalised so that any shortest simple root $\a_i$ satisfies $(\a_i,\a_i) = 2$. The Cartan matrix $(a_{ij})_{ij}$ of $\frak{g}$ is defined by 
$
a_{ij} := \big(\alpha_i^{\vee},\alpha_j\big),
$
where $\alpha_i^{\vee} := 2\a_i/(\a_i,\a_i)$.

Let $q \in \bR$ such that $q \neq -1,0,1$, and denote $q_i := q^{(\alpha_i,\alpha_i)/2}$. The {\em Drinfeld--Jimbo quantised enveloping \algn} $U_q(\frak{g})$ is the noncommutative associative algebra generated by the elements $E_i, F_i, K_i$, and $K^{-1}_i$, for $ i=1, \ldots, r$, subject to the relations 
\begin{align*}
 K_iE_j = q_i^{a_{ij}} E_j K_i, ~~~~ K_iF_j= q_i^{-a_{ij}} F_j K_i, ~~~~ K_i K_j = K_j K_i, ~~~~ K_iK_i^{-1} = K_i^{-1}K_i = 1,\\
 E_iF_j - F_jE_i = \d_{ij}\frac{K_i - K\inv_{i}}{q_i-q_i^{-1}}, ~~~~~~~~~~~~~~~~~~~~~~~~~~~~~~~~~~~~~~~~~
\end{align*}
along with the quantum Serre relations which we omit (see \cite[\textsection 6.1.2]{KSLeabh} for details).
The formulae 
\begin{align*}
\DEL(K_i) = K_i \oby K_i, & & \DEL(E_i) = E_i \oby K_i + 1 \oby E_i, & & \DEL(F_i) = F_i \oby 1 + K_i\inv \oby F_i 
\end{align*}
define a Hopf algebra structure on $U_q(\frak{g})$, satisfying $\e(E_i) = \e(F_i) = 0$, and $\e(K_i) = 1$.

\subsection{Type-$1$ Representations}

The set of {\em fundamental weights} $\{\varpi_1, \dots, \varpi_r\} \sseq \frak{h}^*$ of $\frak{g}$ are defined by $\big(\alpha_i^{\vee}, \varpi_j\big) = \delta_{ij}$, for all $i,j = 1, \dots, r$. We denote by ${\mathcal P}$ the {\em integral weight lattice} of $\frak{g}$, which is to say the $\mathbb{Z}$-span of the fundamental weights. Moreover, we denote by ${\mathcal P}^+$ the cone of {\em dominant integral weights}, which is to say the $\mathbb{Z}_{\geq 0}$-span of the fundamental weights. 


A vector $v\in V$, for any $U_q(\frak{g})$-module $V$, is called a \emph{weight vector} of \emph{weight}~$\mathrm{wt}(v) \in \mathcal{P}$ if
\begin{align}\label{eq:Kweight}
K_i \triangleright v = q^{(\alpha_i, \mathrm{wt}(v))} v, & & \textrm{ for all } i=1,\ldots,r.
\end{align}
For each $\lambda \in\mathcal{P}^+$ there exists an irreducible finite-dimensional $U_q(\frak{g})$-module $V_\lambda$, uniquely defined by the existence of a weight vector $v \in V_\lambda$, of weight $\lambda$ satisfying
$
 E_i \triangleright v =0, 
 \text{for all $i=1,\ldots,r$.}
$
We call such a vector $v$ a {\em highest weight vector}. The vector $v$ is uniquely determined up to scalar multiple. We call any finite direct sum of such $U_q(\frak{g})$-representations a {\em type-$1$ representation}, and we denote by ${}_{U_q(\frak{g})}\mathbf{type}_1$ the full subcategory of ${U_q(\frak{g})}$-modules whose objects are finite  sums of type-1 modules. Each type-$1$ module $V_{\lambda}$ decomposes into a direct sum of \emph{weight spaces}, which is to say, those subspaces of $V_{\lambda}$ spanned by weight vectors. We now choose, once and for all, a weight basis $\{v_i\}_{i=1}^{N_{\lambda}}$, for each irreducible representation $V_{\lambda}$, for $N_{\lambda} := \dim(V_{\lambda})$, which is to say a vector space basis of $V_{\lambda}$ whose elements are weight vectors. We label our basis so that $v_{N_{\lambda}}$ is the highest weight basis vector. 

Since $U_q(\frak{g})$ has an invertible antipode, we have an equivalence between the category of left $U_q(\frak{g})$-modules and the category of right $U_q(\frak{g})$-modules, induced by the antipode. For any finite-dimensional left $U_q(\frak{g})$-module $V$, we denote by $V^*$ the $\mathbb{C}$-linear dual of $V$, endowed with its right \mbox{$U_q(\frak{g})$-module} structure. With respect to the equivalence of left and right $U_q(\frak{g})$-modules, the left module corresponding to $V^*_{\mu}$ is isomorphic to $V_{-w_0(\mu_S)}$, where $w_0$ denotes the longest element in the Weyl group of $\frak{g}$. When discussing a specific irreducible representation $V_{\lambda}$, we usually find it convenient to denote by $\{f_i\}_{i=1}^{N_{\lambda}}$ the basis of $V_{-w_0(\lambda)}$ dual to $\{v_i\}_{i=1}^{N_{\lambda}}$.

\subsection{Quantum Levi Subalgebras}

For $S$ a proper subset of simple roots, consider the Hopf subalgebra of $U_q(\frak{g})$ given by 
\begin{align*}
U_q(\frak{l}_S) := \big< K_i, E_j, F_j \,|\, i = 1, \ldots, r; \, j \in S \big>.
\end{align*} 
We denote $S^c := \Pi \backslash S$, and represent $S^c$ graphically by coloured nodes in the Dynkin diagram of $\frak{g}$. The definition of a type-$1$ module for $U_q(\frak{g})$ has an evident analogue for the subalgebra $U_q(\frak{l}_S)$, giving us the category ${}_{\uslevi}\mathbf{type}_1$. 

Classically the algebra $\frak{l}_S$ is reductive, and hence decomposes into a direct sum of a semisimple part and a central part. This means that in the quantum setting we are motivated to consider the subalgebra 
\begin{align*}
\usslevi := \big< K_i, E_i, F_i \,|\, i \in S \big> \sseq U_q(\frak{l}_S).
\end{align*} 
Just as in the classical case, the sublattice
$$
\mathcal{P}^+_{S} + \mathcal{P}_{S^c} \subseteq \mathcal{P},  
$$
labels the one-dimensional type-$1$ representations of $\uslevi$, where we have denoted 
\begin{align*}
\mathcal{P}^+_{S} :=  & \, \Big\{ \lambda \in \mathcal{P}^+ \,|\, \lambda = \sum^{\,}_{s \in S}\, a_s \varpi_s, \textrm{ for some } a_s \in \mathbb{Z}_{\geq 0} \Big\},\\
\mathcal{P}_{S^{\mathrm{c}}} := & \, \Big\{ \lambda \in \mathcal{P} \,|\, \lambda = \sum^{\,}_{x \in S^{\mathrm{c}}}\, a_x \varpi_x, \textrm{ for some } a_x \in \mathbb{Z} \Big\}.
\end{align*}
Since $\frak{l}_S^{\,\mathrm{s}}$ is a semisimple Lie algebra, it admits no non-trivial one-dimensional representations, which means that the Hopf algebra $U_q(\frak{l}_S^{\,\mathrm{s}})$ admits no non-trivial $1$-dimensional representations. From this it is clear that the one-dimensional representations of $\uslevi$ are labelled by the sublattice $\mathcal{P}_{S^c}$.

\subsection{Quantum Coordinate Algebras and the Quantum Flag Manifolds}

Let $V$ be a finite-dimensional $U_q(\frak{g})$-module, $v \in V$, and $f \in V^*$. Consider the function $c^{\textrm{\tiny $V$}}_{f,v}:U_q(\frak{g}) \to \bC$ defined by $c^{\textrm{\tiny $V$}}_{f,v}(X) := f\big(Xv\big)$.
%
The {\em coordinate ring} of $V$ is the subspace
\begin{align*}
C(V) := \text{span}_{\mathbb{C}}\!\left\{ c^{\textrm{\tiny $V$}}_{f,v} \,| \, v \in V, \, f \in V^*\right\} \sseq U_q(\frak{g})^{\circ}.
\end{align*}
A Hopf subalgebra of $U_q(\frak{g})^\circ$ is given by 
\begin{align*}
\O_q(G) := \bigoplus_{\mu \in \mathcal{P}^+} C(V_{\mu}).
\end{align*}
We call $\OO_q(G)$ the {\em quantum coordinate algebra of $G$}, where $G$ is the compact simply-connected Lie group having $\frak{g}$ as its complexified Lie algebra. Note that the Hopf algebra $\O_q(G)$ is cosemisimple by construction. 

Consider now the coideal subalgebra of $\uslevi$-invariants
\begin{align*}
\O_q\big(G/L_S\big) := {}^{U_q(\frak{l}_S)}\O_q(G),
\end{align*} 
with respect to the natural left $U_q(\frak{g})$-module structure on $\OO_q(G)$. Just as for $U_q(\frak{g})$, we can talk about the \emph{type-1 dual} $\OO_q(L_S)$ of $\uslevi$.  Equivalently, $\OO_q(G/L_S)$ can be described as the space of coinvariants of the right $\OO_q(L_S)$-coaction associated to the Hopf algebra map 
$$
\rho_{S}:\OO_q(G)  \to  \OO_q(L_S),
$$ 
given by restriction of domains. Indeed, since any finite-dimensional $\uslevi$-module can be embedded into a finite-dimensional $U_q(\frak{g})$-module, this map is surjective, and 
$$
\pi_{\OO_q(G/L_S)} \simeq \OO_q(L_S). 
$$
Since $\OO_q(L_S)$ is cosemisimple by construction, $\OO_q(G/L_S)$ is a quantum homogeneous space. We call it the {\em quantum flag manifold associated to} $S$. 

Since any finite-dimensional $\uslevi$-module can be embedded into a finite-dimensional $U_q(\frak{g})$-module,


\subsection{The Associated Quantum Poisson Homogeneous Spaces} \label{Subsection:QPHS}

Consider now the coideal subalgebra of $\usslevi$-invariants
\begin{align*}
\O_q\big(G/L^{\,\mathrm{s}}_S\big) := {}^{\usslevi}\O_q(G),
\end{align*} 
which we call the \emph{quantum homogeneous Poisson space associated} to $S$. Just as for the quantum flag manifolds, we have an isomorphism 
$$
\pi_{\OO_q(G/L^{\mathrm{\,s}}_S)}(\OO_q(G)) \simeq \OO_q(L^{\mathrm{\,s}}_S),
$$
where $\OO_q(L^{\mathrm{\,s}}_S)$ is the type-$1$ dual of $\usslevi$. Since $\sslevi$ is cosemisimple by construction, $\OO_q(G/L_S)$ is a quantum homogeneous space.

As shown in \cite[\textsection Theorem 4.1]{Stok}, a set of generators for $\OO_q(G/L_S^{\,\mathrm{s}})$ is given by 
\begin{align} \label{eqn:QPHSGens}
z^{\varpi_x}_{i} := c^{V_{\varpi_x}}_{f_i,v_{N_x}}, & & \overline{z}_i^{\,\varpi_x} := c^{V_{-w_0(\varpi_x)}}_{v_i,f_{N_x}} & & \text{ for }  i = 1, \dots, N_x, \textrm{ and } x \in S^c,
\end{align}
where we have denoted $N_x := N_{\varpi_x} = \dim(V_{\varpi_x})$, and $\{v_i\}_{i=1}^{N_{x}}$, and $\{f_i\}_{i=1}^{N_{x}}$, are the weight bases of $V_{\varpi_x}$, and $V_{-w_0(\varpi_x)}$ respectively, chosen in \textsection \ref{subsection:DJQG}.  We also find it convenient to introduce special notation for the following distinguished elements
\begin{align}
 z^{\varpi_x} := z^{\varpi_x}_{f_{N_x}}, & & \overline{z}^{\,\varpi_x} := \overline{z}_{v_{N_x}}^{\,\varpi_x}.
\end{align}
The elements $z^{\varpi_x}$ and $\overline{z}^{\,\varpi_x}$ have many distinguishing features which merit this separate notation, the most evident being that they are not contained in $\OO_q(G)^+$. 

\begin{eg} \label{eg:SU2Gens}
Note that $\OO_q(SU_2)$ is the quantum Poisson homogeneous space of the Podle\'s sphere $\OO_q(S^2)$, the unique quantum flag manifold of $\OO_q(SU_2)$. In this case there is just one fundamental representation $V_{\varpi_1}$ and it is of dimension $2$. The associated generators 
\begin{align*}
u_{12} := z^{\varpi_1}_{1} = c^{\varpi_1}_{12}, & &  u_{22} := z^{\varpi_1}_{2} = c^{\varpi_1}_{22}, & & u_{11} := \overline{z}^{\varpi_1}_{1} = c^{\varpi_1}_{11}, & & u_{21} :=  \overline{z}^{\varpi_1}_{2} = c^{\varpi_1}_{21},
\end{align*}
are just the matrix coefficients of $V_{\varpi_1}$, and so, they reduce to the well-known set of $\OO_q(SU_2)$ generators \cite[Proposition 2.13]{BeggsMajid:Leabh}.
\end{eg}


\subsection{Quantum Flag Manifolds and Principal Pairs} \label{subsection:QFMPP}

In this subsection we identify the Hopf algebras $K_{\OO_q(L_S)}$ and $\O_q(L_S^{\mathrm{s}})$ and conclude that the quantum Poisson homogeneous space and the quantum flag manifold give a principal pair. We begin with a lemma establishing centrality of the grouplike elements of $\OO_q(L_S)$.

\begin{lem} \label{lem:central}
Every element of $\Gamma_{\OO_q(L_S)}$ is central in $\OO_q(L_S)$.
\end{lem}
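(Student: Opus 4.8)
The plan is to identify the grouplike elements of $\OO_q(L_S)$ with the one-dimensional type-$1$ representations of $U_q(\frak{l}_S)$, and then to show that each such grouplike commutes with every matrix coefficient by exhibiting a module isomorphism between two tensor products. Recall that $\OO_q(L_S)$ is spanned by the matrix coefficients $c^V_{f,v}$ of finite-dimensional type-$1$ $U_q(\frak{l}_S)$-modules $V$, and that its grouplike elements are precisely the matrix coefficients of the one-dimensional type-$1$ modules, which are labelled by $\lambda \in \mathcal{P}_{S^c}$. Writing $\mathbb{C}_\lambda$ for the one-dimensional module of weight $\lambda$, with associated character $\chi_\lambda:U_q(\frak{l}_S) \to \mathbb{C}$ and grouplike $g_\lambda \in \OO_q(L_S)$, the first step is to record the values $\chi_\lambda(K_i) = q^{(\alpha_i,\lambda)}$ together with $\chi_\lambda(E_j) = \chi_\lambda(F_j) = 0$ for $j \in S$, the latter following from the semisimplicity of $\frak{l}^{\mathrm{s}}_S$.

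The key observation, which I expect to be the crux of the argument, is that for $j \in S$ and $\lambda \in \mathcal{P}_{S^c}$ one has $(\alpha_j, \lambda) = 0$, since $(\alpha_j, \varpi_x) = 0$ whenever $j \neq x$ and $\lambda$ is a $\mathbb{Z}$-combination of the $\varpi_x$ with $x \in S^c$. In particular $\chi_\lambda(K_j) = \chi_\lambda(K_j^{-1}) = 1$ for all $j \in S$.

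With this in hand, I would use the standard fact that in a coordinate algebra the product of matrix coefficients is the matrix coefficient of the tensor product module, so that $g_\lambda \, c^V_{f,v} = c^{\mathbb{C}_\lambda \otimes V}_{1^* \otimes f,\, 1 \otimes v}$ while $c^V_{f,v}\, g_\lambda = c^{V \otimes \mathbb{C}_\lambda}_{f \otimes 1^*,\, v \otimes 1}$, where $1$ spans $\mathbb{C}_\lambda$ and $1^*$ its dual. It therefore suffices to check that the linear map $\tau:\mathbb{C}_\lambda \otimes V \to V \otimes \mathbb{C}_\lambda$, $1 \otimes v \mapsto v \otimes 1$, is an isomorphism of $U_q(\frak{l}_S)$-modules, under which the cyclic vectors and dual functionals above are identified. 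Checking that $\tau$ is a module map on the generators $K_i^{\pm 1}, E_j, F_j$ ($j \in S$) is a direct computation using the coproduct formulas: on $K_i$ both sides acquire the scalar $q^{(\alpha_i,\lambda)}$; on $E_j$ and $F_j$ the terms involving $\chi_\lambda(E_j)$ and $\chi_\lambda(F_j)$ vanish, and the surviving terms coincide precisely because $\chi_\lambda(K_j) = \chi_\lambda(K_j^{-1}) = 1$.

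Finally, since $\OO_q(L_S)$ is spanned by the $c^V_{f,v}$, linearity yields $g_\lambda a = a g_\lambda$ for all $a \in \OO_q(L_S)$, so every element of $\Gamma_{\OO_q(L_S)}$ is central, as claimed. The only genuine subtlety to handle with care is the vanishing $(\alpha_j, \lambda) = 0$ for $j \in S$; everything else reduces to the bialgebra identity for products of matrix coefficients together with routine verifications on the four classes of generators.
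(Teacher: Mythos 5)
Your proof is correct, and it reaches the conclusion by a genuinely different route from the paper's. Both arguments rest on the same two ingredients: every grouplike of $\OO_q(L_S)$ is a matrix coefficient of a one-dimensional type-$1$ representation $\mathbb{C}_\lambda$, $\lambda \in \mathcal{P}_{S^{\mathrm{c}}}$ (incidentally, the paper's proof writes $\lambda \in \mathcal{P}_S$ where it means $\mathcal{P}_{S^{\mathrm{c}}}$), and the associated character is trivial on $U_q(\frak{l}^{\,\mathrm{s}}_S)$, i.e.\ $\chi_\lambda(E_j) = \chi_\lambda(F_j) = 0$ and $\chi_\lambda(K_j^{\pm 1}) = 1$ for $j \in S$, which is your observation $(\alpha_j,\lambda)=0$. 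The difference lies in how commutation is then extracted. The paper evaluates the two products $\gamma h$ and $h\gamma$ directly against the spanning set of monomials $Y = \big(\prod_{i=1}^r K_i^{a_i}\big)X$, with $X$ a monomial in the $E_j, F_j$, $j \in S$: the coproduct legs of $X$ containing $E$'s or $F$'s are killed by $\gamma$, the surviving $K$-factors evaluate to $1$, and both products collapse to $\gamma\big(\prod_{x \in S^{\mathrm{c}}} K_x^{a_x}\big)h(Y)$. This is a bare-hands computation, but it implicitly relies on the PBW-type fact that such monomials span $U_q(\frak{l}_S)$, and on elements of $\OO_q(L_S) \subseteq U_q(\frak{l}_S)^{\circ}$ being determined by their values on $U_q(\frak{l}_S)$. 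You instead package the same computation as the statement that the flip $\mathbb{C}_\lambda \otimes V \to V \otimes \mathbb{C}_\lambda$ is a $U_q(\frak{l}_S)$-module isomorphism, and then invoke the identity $c^V_{f,v}\, c^W_{g,w} = c^{V \otimes W}_{f \otimes g,\, v \otimes w}$. What this buys is a check confined to the four classes of generators, with no spanning-set argument needed, and it makes the mechanism transparent: $\mathbb{C}_\lambda$ commutes with every type-$1$ module in the tensor category precisely because its character is trivial on $U_q(\frak{l}^{\,\mathrm{s}}_S)$ and the $K_i$ are grouplike. Neither proof is more general than the other, since they use identical underlying facts, but yours localises the key point, $\chi_\lambda(K_j)=1$ for $j \in S$, exactly where it is needed, namely in the $E_j$ and $F_j$ checks.
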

\begin{proof}
Note first that any $\gamma \in \Gamma_{\OO_q(L_S)}$ is a coordinate function associated to a one-dimensional representation $V_{\lambda}$, for $\lambda \in \mathcal{P}_S$. This means that any element of $U_q(\frak{l}_S^{\,\mathrm{s}})$ must act trivially on $V$, and hence the only generators of $U_q(\frak{l}_S)$ that act non-trivially on $V$ are $K_x^{\pm 1}$, for $x \in S^{\mathrm{c}}$. Consider a general monomial $X$ in the generators $E_j,F_j$, for $j \in S$, then take the general product
\begin{align*}
Y := (\Pi_{i=1}^r K_i^{a_i})X \in U_q(\frak{l}_S), & & \textrm{ for } a_1, \dots, a_r \in \mathbb{Z},
\end{align*}
and note that for all $h \in \OO_q(L_S)$
\begin{align*}
\gamma h (Y) = \gamma(Y_{(1)})h(Y_{(2)}) = \, & \gamma\big((\Pi_{i=1}^r K_i^{a_i})X_{(1)}\big)h((\Pi_{i=1}^r K_i^{a_i})X_{(2)}) \\
= & \, \gamma\big(\Pi_{x \in S^{\mathrm{c}}}K_x^{a_x}\big)h((\Pi_{i=1}^r K_i^{a_i})X).
\end{align*}
Similarly it holds that 
$$
h\gamma(Y) = h((\Pi_{i=1}^r K_i^{a_i})X)\gamma\big(\Pi_{x \in S^{\mathrm{c}}}K_x^{a_x}\big).
$$
Thus we see that $\gamma$ and $h$ commute for all $h \in \OO_q(L_S)$.
\end{proof}

\begin{prop} \label{thm:GAPPQFM}
The pair
$
\big(\OO_q(G/L_S), \, \OO_q(G/L^{\mathrm{s}}_S)\big)
$
is a group algebra principal pair.
\end{prop}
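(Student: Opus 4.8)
The plan is to deduce the statement directly from Theorem~\ref{thm:GAPP}. I set $A = \OO_q(G)$ and $H = \slevi$, with the surjective Hopf algebra map $\rho_S\colon \OO_q(G) \to \slevi$. The two hypotheses of Theorem~\ref{thm:GAPP} are then at hand: $\slevi$ is cosemisimple by construction (being the type-$1$ dual of $\uslevi$), and the centrality $\Gamma_{\slevi} \sseq Z_{\slevi}$ is exactly Lemma~\ref{lem:central}. Consequently $\big(A^{\co(\slevi)},\, A^{\co(K_{\slevi})}\big)$ is a group algebra principal pair, and since $A^{\co(\slevi)} = \OO_q(G/L_S)$ by the description of the flag manifold recalled above, it remains only to identify the total space $A^{\co(K_{\slevi})}$ with $\qphs$.

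The heart of the argument is therefore the Hopf algebra isomorphism $K_{\slevi} \cong \sslevi$. First I would construct the natural candidate: the inclusion $\usslevi \hookrightarrow \uslevi$ dualises, by restriction of matrix coefficients, to a surjective Hopf algebra map $\tau\colon \slevi \to \sslevi$ (surjectivity following as for $\rho_S$, since every finite-dimensional $\usslevi$-module embeds into the restriction of a finite-dimensional $\uslevi$-module). Each grouplike $\gamma \in \Gamma_{\slevi}$ is the coordinate function of a one-dimensional $\uslevi$-representation labelled by some $\lambda \in \mathcal{P}_{S^c}$; as $\usslevi$ acts trivially on such a representation, $\tau(\gamma) = 1$. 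Hence $\langle \mathbb{C}\Gamma_{\slevi}^+\rangle \sseq \ker\tau$, so $\tau$ descends to a surjection $\overline{\tau}\colon K_{\slevi} \to \sslevi$.

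To prove $\overline{\tau}$ injective I would match the two sides summand by summand. By Lemma~\ref{lem:GPW}, $K_{\slevi} = \bigoplus_{[\alpha]} K_{[\alpha]}$ is indexed by the $\Gamma$-equivalence classes of irreducible $\uslevi$-representations, while $\sslevi$ is Peter--Weyl decomposed over the irreducible $\usslevi$-representations. The key observation is that restriction identifies these index sets: each irreducible $V_\lambda$ restricts to the irreducible $\usslevi$-module of highest weight $\lambda|_S$ (it is generated from its highest weight vector by the $F_j$, $j \in S$), and two weights $\lambda, \lambda'$ give $\Gamma$-equivalent $\uslevi$-representations precisely when $\lambda - \lambda' \in \mathcal{P}_{S^c}$, which is exactly the condition $(\alpha_j^{\vee}, \lambda - \lambda') = 0$ for all $j \in S$ characterising $\lambda|_S = \lambda'|_S$. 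For a class representative $V_\lambda$, restriction to $\usslevi$ preserves dimension, so it carries $C(V_\lambda)$ isomorphically onto the coefficient space $C(W_{\overline\alpha})$ of the restricted irreducible; this isomorphism factors as $C(V_\lambda) \xrightarrow{\proj_K} K_{[\alpha]} \xrightarrow{\overline{\tau}} C(W_{\overline\alpha})$, and a dimension count then forces $\overline{\tau}$ to be an isomorphism on each summand, hence globally.

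Finally, transitivity of restriction along $\usslevi \hookrightarrow \uslevi \hookrightarrow U_q(\frak{g})$ gives that $\overline{\tau} \circ \proj_K \circ \rho_S$ equals the defining restriction map $\OO_q(G) \to \sslevi$, so the two right coactions agree and $A^{\co(K_{\slevi})} = \qphs$, completing the identification. I expect the isomorphism $K_{\slevi} \cong \sslevi$ --- in particular the matching of the $\Gamma$-decomposition against the Peter--Weyl decomposition of $\sslevi$ together with the accompanying dimension control --- to be the only substantive step; the remainder is a routine application of the framework already in place.
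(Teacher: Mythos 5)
Your proposal is correct and follows essentially the same route as the paper: both reduce the statement to the Hopf algebra isomorphism $K_{\OO_q(L_S)} \cong \sslevi$, established by playing the $\Gamma$-decomposition of $\OO_q(L_S)$ off against the Peter--Weyl decomposition of $\sslevi$ and closing the argument with a summand-by-summand dimension count, before invoking cosemisimplicity of $\OO_q(L_S)$ and Lemma~\ref{lem:central}. The only cosmetic difference is that you run the comparison through the induced quotient map $\overline{\tau}\colon K_{\OO_q(L_S)} \to \sslevi$, whereas the paper equivalently shows that $\ker(\rho)$ coincides with the ideal generated by the elements $\gamma - 1$; your explicit final check that the two coactions on $\OO_q(G)$ agree is a point the paper leaves implicit.
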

\begin{proof}
 Recall the Peter--Weyl decompositions 
\begin{align*}
\OO_q(L_S) \simeq \bigoplus_{\lambda + \mu \in \mathcal{P}_S^+ + \mathcal{P}^{\,}_{S^c}} C(U_{\lambda + \mu}), & & \OO_q(L^{\,\mathrm{s}}_S) = \bigoplus_{\mu \in \mathcal{P}_S^+} C(W_{\mu}),
\end{align*}
and consider the coarser decomposition of $\OO_q(L_S)$ given by 
\begin{align*}
\OO_q(L_S) \simeq \bigoplus_{\lambda \in \mathcal{P}_S} A_{\lambda}, & & \textrm{ where } A_{\lambda} := \bigoplus_{\mu \in \mathcal{P}^{{}^{\,}}_{S^c}} C(U_{\lambda + \mu}).
\end{align*}
We note that the kernel of the Hopf algebra surjection $\rho:\OO_q(L_S) \twoheadrightarrow \OO_q(L^{\mathrm{s}}_S)$ is homogeneous with respect to this coarser decomposition. It holds that  
\begin{align} \label{eqn:gens}
c^{V_{\lambda}}_{f,v} - 1 \in \ker(\rho), & & \textrm{ for all } \lambda  \in  \mathcal{P}^+_{S^c}, \, f \in U_{\lambda}^*, \, v \in U_{\lambda},
\end{align}
and moreover, the ideal $I$ generated by these elements is homogeneous with respect to the coarser grading, giving us a decomposition 
$$
I \simeq \bigoplus_{\lambda \in \mathcal{P}^+_S} I_{\lambda}.
$$
Now the quotient $A_{\lambda}/I_{\lambda}$ has dimension less than or equal to $C(W_{\lambda})$, which together with surjectivity of $\rho$ implies that the dimensions are equal, which is to say, the kernel of $\rho$ is generated by the elements given in \eqref{eqn:gens}. Thus we see that $\OO_q(L^{\,\mathrm{s}}_S)$ is isomorphic as a Hopf algebra to  $K_{\OO_q(L_S)}$. Thus, since $\OO_q(L_S)$ is cosemisimple by construction, and we have shown in the previous lemma that all the grouplike elements of $\OO_q(L_S)$ are central, the given pair is a principal pair.
\end{proof}

\begin{cor}
The categorical equivalence
\begin{align*}
{}^{~~~\,\OO_q(G)}_{\OO_q(G/L_S)}\mathrm{Mod}^{\OO_q(L^{\,\mathrm{s}}_S)} ~ \simeq ~ {}^{\OO_q(L_S)}\mathrm{Mod}^{{\OO_q(L^{\,\mathrm{s}}_S})}.
\end{align*}
follows immediately from Proposition \ref{prop:PPTak}.
\end{cor}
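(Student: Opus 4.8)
The plan is to obtain the stated equivalence as a direct specialisation of the general Takeuchi-type equivalence of Proposition~\ref{prop:PPTak}. That proposition furnishes, for \emph{any} nested pair $(B,P)$ of quantum homogeneous $A$-spaces, an adjoint equivalence
$$
{}^A_B\mathrm{Mod}^{\pi_P} \simeq {}^{\pi_B}\mathrm{Mod}^{\pi_P},
$$
implemented by the functors $\Phi$ and $\Psi$ of Takeuchi's equivalence together with the right $\pi_P(A)$-comodule structures introduced above. The only work remaining is then bookkeeping: to rewrite the two abstract quotient Hopf algebras $\pi_B(A)$ and $\pi_P(A)$ in terms of the concrete Hopf algebras appearing in the statement.

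First I would invoke Proposition~\ref{thm:GAPPQFM}, which shows that $(B,P) = \big(\OO_q(G/L_S),\, \OO_q(G/L^{\,\mathrm{s}}_S)\big)$ is a group algebra principal pair, and in particular a nested pair of quantum homogeneous $\OO_q(G)$-spaces; this licenses the application of Proposition~\ref{prop:PPTak} with $A = \OO_q(G)$. Next I would insert the two identifications of quotient Hopf algebras recorded when the quantum flag manifolds and their Poisson homogeneous spaces were introduced, namely $\pi_B(A) = \pi_{\OO_q(G/L_S)}(\OO_q(G)) \simeq \OO_q(L_S)$ on the base side, and $\pi_P(A) = \pi_{\OO_q(G/L^{\,\mathrm{s}}_S)}(\OO_q(G)) \simeq \OO_q(L^{\,\mathrm{s}}_S)$ on the total-space side.

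Substituting these identifications into the two categories of Proposition~\ref{prop:PPTak} turns the left-hand category ${}^A_B\mathrm{Mod}^{\pi_P}$ into ${}^{\OO_q(G)}_{\OO_q(G/L_S)}\mathrm{Mod}^{\OO_q(L^{\,\mathrm{s}}_S)}$ and the right-hand category ${}^{\pi_B}\mathrm{Mod}^{\pi_P}$ into ${}^{\OO_q(L_S)}\mathrm{Mod}^{\OO_q(L^{\,\mathrm{s}}_S)}$, which is precisely the claimed equivalence. There is no genuine obstacle here: once the group algebra principal pair structure of Proposition~\ref{thm:GAPPQFM} is established, the corollary is a pure translation of notation, which is why it is stated as following immediately from Proposition~\ref{prop:PPTak}. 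The only point deserving a moment's care is checking that the superscript and subscript decorations line up correctly under the two identifications, but this is routine.
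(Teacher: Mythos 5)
Your proposal is correct and takes essentially the same route as the paper: the corollary is precisely the specialisation of Proposition~\ref{prop:PPTak} to the nested pair $\big(\OO_q(G/L_S),\, \OO_q(G/L^{\,\mathrm{s}}_S)\big)$ of quantum homogeneous $\OO_q(G)$-spaces furnished by Proposition~\ref{thm:GAPPQFM}, combined with the identifications $\pi_B(\OO_q(G)) \simeq \OO_q(L_S)$ and $\pi_P(\OO_q(G)) \simeq \OO_q(L^{\,\mathrm{s}}_S)$ recorded in \textsection\ref{section:QFMs}. Your remark that only the nested-pair property (rather than the full group algebra principal pair structure) is needed to invoke Proposition~\ref{prop:PPTak} is accurate and consistent with the paper's intent.
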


\begin{eg}
For any Drinfeld--Jimbo quantum group $U_q(\frak{g})$, the quantum flag manifold $\OO_q(F_G)$ associated to the choice of simple nodes $S = \varnothing$ is called the \emph{full quantum flag manifold} of $\OO_q(G)$. In this case the associated Hopf algebra $\OO_q(L_S)$ is isomorphic to $\OO(\mathbb{T}^{r})$, which is to say the group Hopf algebra of $\mathbb{Z}^{r}$. This gives a strong $\mathbb{Z}^{r}$-algebra grading
$$
\OO_q(G) \simeq \bigoplus_{\gamma \in \mathbb{Z}^{r}} \EE_{\gamma}.
$$
In other words, we have a decomposition of $\OO_q(G)$ into relative line modules. Since $\OO(\mathbb{T}^{r})$ is spanned by grouplike elements, this can be see as a degenerate case of Theorem \ref{thm:GAPPQFM} in the spirit of Example \ref{eg:degGAPP}.
\end{eg}

\begin{eg} \label{eg:SU2Grading}
The full quantum flag manifold of $\OO_q(SU_2)$ is the Podle\'s sphere, and the associated algebra grading of $\OO_q(SU_2)$ is a $\mathbb{Z}$-grading. Recalling the notation of Example \ref{eg:SU2Gens}, the grading is determined by 
\begin{align*}
\mathrm{deg}(u_{11}) = \mathrm{deg}(u_{21}) = - 1, & & \mathrm{deg}(u_{12}) = \mathrm{deg}(u_{22}) = 1.
\end{align*}
Thus we recover the well-known $\mathbb{Z}$-grading of $\OO_q(SU_2)$, as considered, for example, in \cite[\textsection 1]{Maj} up to a change of sign.
\end{eg}

\subsection{Relative Line Modules} \label{subsection:QFMlinebundles}

Since $\big(\OO_q(G/L_S), \, \OO_q(G/L^{\mathrm{s}}_S)\big)$
is a principal pair, Lemma \ref{lem:GAPP} implies the following result.

\begin{prop}
The direct sum decomposition of $\OO_q(G/L_S^{\,\mathrm{s}})$ into simple subobjects is a decomposition into relative line modules
$$
\OO_q(G/L_S^{\,\mathrm{s}}) \simeq \bigoplus_{\gamma \in \mathcal{P}_{S^c}} \EE_{\gamma}.
$$
This gives $\qphs$ the structure of a strongly $\mathcal{P}_{S^c}$-graded algebra. Moreover, every relative line module over $\OO_q(G/L_S)$ is isomorphic to $\EE_{\gamma}$, for some $\gamma \in \mathcal{P}_{S^{\mathrm{c}}}$. 
\end{prop}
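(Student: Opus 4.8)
The plan is to derive everything from the general machinery already established for group algebra principal pairs. By Proposition \ref{thm:GAPPQFM} the pair $\big(\OO_q(G/L_S),\OO_q(G/L^{\mathrm{s}}_S)\big)$ is a group algebra principal pair, with $H = \OO_q(L_S)$ the relevant cosemisimple Hopf algebra. The key point I would establish first is the identification of the grading group: I would show that $\Gamma_{\OO_q(L_S)} \simeq \mathcal{P}_{S^c}$. This follows because the grouplike elements of $\OO_q(L_S)$ are precisely the coordinate functions of the one-dimensional type-$1$ representations of $\uslevi$, and by the discussion in \textsection\ref{subsection:DJQG}--\textsection (Quantum Levi Subalgebras) these are labelled exactly by the lattice $\mathcal{P}_{S^c}$, since $U_q(\frak{l}_S^{\,\mathrm{s}})$ admits no non-trivial one-dimensional representations. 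With this identification in hand, the whole statement becomes a direct specialisation of Proposition \ref{lem:GAPP}.

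Concretely, I would invoke the $\Gamma_H$-algebra grading of the total space $P = A^{\co(K_H)}$ from \eqref{eqn:PDecomp}, which here reads as the $\mathcal{P}_{S^c}$-grading
$$
\OO_q(G/L_S^{\,\mathrm{s}}) \simeq \bigoplus_{\gamma \in \mathcal{P}_{S^c}} \EE_{\gamma}.
$$
Part (1) of Proposition \ref{lem:GAPP} gives that this grading is strong; part (2) gives that each $\EE_{\gamma}$ is a relative line module over the base $B = \EE_0 = \OO_q(G/L_S)$, with $\Phi(\EE_{\gamma}) = \mathbb{C}\gamma$; part (4) identifies the decomposition as the unique decomposition into simple subobjects; and part (3) establishes that \emph{every} relative line module over $\OO_q(G/L_S)$ is isomorphic to some $\EE_{\gamma}$. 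Thus each assertion of the proposition corresponds to one clause of Proposition \ref{lem:GAPP}, after the translation $\Gamma_H = \mathcal{P}_{S^c}$.

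The only genuine verification, and the step I expect to be the main obstacle, is the isomorphism $\Gamma_{\OO_q(L_S)} \simeq \mathcal{P}_{S^c}$. One inclusion is the structural observation above; for the reverse, I would need to check that each weight $\gamma \in \mathcal{P}_{S^c}$ does give rise to a genuinely grouplike coordinate function (not merely a one-dimensional comodule) and that distinct weights give distinct grouplikes. The former amounts to noting that a one-dimensional corepresentation is automatically spanned by a grouplike element, while the latter follows because the associated weight spaces are distinguished by the eigenvalues of the central $K_x^{\pm 1}$ action, $x \in S^c$, as in the centrality argument of Lemma \ref{lem:central}. Once this bijection of lattices is secured, the remainder is a routine transcription and no further calculation is required.
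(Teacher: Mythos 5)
Your proposal is correct and follows essentially the same route as the paper, which derives the proposition in one line from Proposition \ref{lem:GAPP} applied to the group algebra principal pair of Proposition \ref{thm:GAPPQFM}. The only difference is that you spell out the identification $\Gamma_{\OO_q(L_S)} \simeq \mathcal{P}_{S^c}$ (via one-dimensional type-$1$ representations of $\uslevi$), a step the paper leaves implicit in its earlier discussion of the quantum Levi subalgebras and Lemma \ref{lem:central}.
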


We now give a concrete description of the decomposition of $\qphs$ into line bundles in terms of the generating set of $\OO_q(G/L^{\,\mathrm{s}})$ presented in  \eqref{eqn:QPHSGens}.

\begin{cor}
The $\mathcal{P}_{S^c}$-grading of $\OO_q(G/L^{\,\mathrm{s}}_S)$ is determined by 
\begin{align*}
\mathrm{deg}(z^{\varpi_x}_i) = \varpi_x, & & \mathrm{deg}(\overline{z}^{\,\varpi_x}_i) = -\varpi_x, 
\end{align*}
where $i = 1, \dots, N_x$, and $x \in S^c$.
\end{cor}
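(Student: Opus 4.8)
The plan is to read off the $\mathcal{P}_{S^c}$-grading directly from the defining coaction. Recall that the grading on $\OO_q(G/L^{\,\mathrm{s}}_S)$ is induced by the right $\mathbb{C}\Gamma_{\OO_q(L_S)}$-coaction $\Delta_{R,\pi_B}$ (with $\pi_B = \rho_S$), followed by the identification of $\Gamma_{\OO_q(L_S)}$ with $\mathcal{P}_{S^c}$ coming from the labelling of the one-dimensional representations of $\uslevi$. Thus a homogeneous element of degree $\gamma \in \mathcal{P}_{S^c}$ is exactly one satisfying $\Delta_{R,\pi_B}(p) = p \otimes g_{\gamma}$, where $g_{\gamma}$ is the grouplike element of $\OO_q(L_S)$ labelled by $\gamma$. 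So it suffices to show that $\Delta_{R,\pi_B}(z^{\varpi_x}_i) = z^{\varpi_x}_i \otimes g_{\varpi_x}$ and $\Delta_{R,\pi_B}(\overline{z}^{\,\varpi_x}_i) = \overline{z}^{\,\varpi_x}_i \otimes g_{-\varpi_x}$.

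The crucial step is a one-dimensionality observation. For $x \in S^c$, I claim that the highest weight vector $v_{N_x} \in V_{\varpi_x}$ spans a one-dimensional $\uslevi$-submodule. Indeed $E_i \triangleright v_{N_x} = 0$ for all $i$ by the highest-weight property, while $(\alpha_s^{\vee}, \varpi_x) = \delta_{sx} = 0$ for every $s \in S$, so the weight $\varpi_x$ is trivial on the semisimple part $\usslevi$; as $V_{\varpi_x}$ is finite-dimensional, this forces $F_s \triangleright v_{N_x} = 0$ for $s \in S$ as well. Since the $K_i$ act by the character $g_{\varpi_x}$, we obtain $X \triangleright v_{N_x} = g_{\varpi_x}(X)\, v_{N_x}$ for all $X \in \uslevi$. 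The mirror-image argument, applied to the lowest weight vector $f_{N_x} \in V_{-w_0(\varpi_x)}$, which has weight $-\varpi_x$, shows that it spans a one-dimensional $\uslevi$-submodule affording the character $g_{-\varpi_x}$.

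Granting this, the grading is immediate. Applying the matrix-coefficient comultiplication $\Delta(c^{V}_{f,v}) = \sum_k c^{V}_{f,v_k} \otimes c^{V}_{f_k,v}$ and then restricting the second leg along $\rho_S$, one-dimensionality collapses the sum:
$$
\rho_S\big(c^{V_{\varpi_x}}_{f_k,\, v_{N_x}}\big) = \delta_{k,N_x}\, g_{\varpi_x},
$$
whence $\Delta_{R,\pi_B}(z^{\varpi_x}_i) = z^{\varpi_x}_i \otimes g_{\varpi_x}$ and so $\mathrm{deg}(z^{\varpi_x}_i) = \varpi_x$. The identical computation for $\overline{z}^{\,\varpi_x}_i$, now using the character $g_{-\varpi_x}$ of $f_{N_x}$, yields $\mathrm{deg}(\overline{z}^{\,\varpi_x}_i) = -\varpi_x$.

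I expect the one-dimensionality lemma to be the only genuine obstacle, and within it the vanishing $F_s \triangleright v_{N_x} = 0$ for $s \in S$: this is precisely where the orthogonality $\varpi_x \perp S$ enters, and where one must carefully separate the action of the full $U_q(\frak{g})$ from that of the Levi subalgebra. Everything downstream is a mechanical unwinding of the coaction, and the passage from the grouplike element $g_{\pm\varpi_x}$ to the weight $\pm\varpi_x \in \mathcal{P}_{S^c}$ is nothing more than the identification of $\Gamma_{\OO_q(L_S)}$ with the one-dimensional $\uslevi$-representations recalled earlier.
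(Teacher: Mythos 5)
Your proof is correct, and it follows the same overall skeleton as the paper's --- compute the right $\OO_q(L_S)$-coaction on the generators and show that the second tensor leg collapses onto a single grouplike --- but the key identification is made by a genuinely different mechanism. The paper first invokes the identity $\pi_H(P)=\mathbb{C}\Gamma_H$ of Theorem \ref{thm:GAPP} to place each $\pi_{\OO_q(L_S)}(z^{\varpi_x}_a)$ inside the group algebra, and then evaluates against the generators $K_y$, for $y\in S^{\mathrm{c}}$, obtaining $q^{(\alpha_y,\varpi_x)}\delta_{a,N_x}$, thereby identifying the relevant grouplike and killing the terms with $a\neq N_x$. You instead prove a representation-theoretic lemma: $\mathbb{C}v_{N_x}$ is a one-dimensional $\uslevi$-submodule of $V_{\varpi_x}$ (highest-weight property, the orthogonality $(\alpha_s^{\vee},\varpi_x)=0$ for $s\in S$, and the finite-dimensional $U_{q_s}(\frak{sl}_2)$ argument forcing $F_s\triangleright v_{N_x}=0$), so that $\rho_S\big(c^{V_{\varpi_x}}_{f_a,v_{N_x}}\big)$ is computed on all of $\uslevi$ at once; the mirror image applies to the lowest weight vector $f_{N_x}$. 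What your route buys: it is self-contained (it does not use the group algebra principal pair theorem, and in effect re-proves the instance of $\pi_H(P)=\mathbb{C}\Gamma_H$ that is needed here), and it sidesteps a small subtlety in the paper's shortcut, namely that an element of $\mathbb{C}\Gamma_{\OO_q(L_S)}$ is not determined by its pairings with the $K_y$ alone --- one needs monomials in the $K_y^{\pm 1}$ together with linear independence of characters --- whereas your evaluation on the whole of $\uslevi$ settles this outright. What the paper's route buys is brevity, given the machinery already established. Two minor points: your one-dimensionality lemma is precisely the fact that places $z^{\varpi_x}_i$ in $\OO_q(G/L^{\,\mathrm{s}}_S)$ in the first place, so it is less an ``obstacle'' than a restatement of the setup; and to fully justify the word ``determined'' in the statement you should add the paper's closing remark that an algebra grading is fixed by the degrees of a set of algebra generators.
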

\begin{proof}
 The right $\OO_q(L_S)$-coaction on $\OO_q(G/L^{\,\mathrm{s}}_S)$ 
acts on $z^{\varpi_x}_i$ according to 
\begin{align*}
\Delta_{R,\OO_q(L_S)}(z^{\varpi_x}_i) = \sum_{a=1}^{N_x} c^{V_{\varpi_x}}_{v_i,f_a} \otimes \pi_{\OO_q(L_S)}(\,z^{\varpi_x}_{a}). 
\end{align*}
It follows from \eqref{eqn:GAPPIdentity} in Theorem \ref{thm:GAPP} that $\pi_{\OO_q(L_S)}(z^{\varpi_x})$ is an element of $\Gamma_{\OO_q(L_S)}$. 
Moreover, for any $y \in S^c$,
\begin{align*}
\langle  K_y, \pi_{\OO_q(L_S)}(\,z^{\varpi_x}_{a}) \rangle = \langle  K_y, z^{\varpi_x}_{a} \rangle = f_a(K_y \triangleright N_x) = q^{(\alpha_y,\varpi_x)} \delta_{a,N_x}, & & \textrm{ for all } a = 1, \dots, N_x.
\end{align*}
Thus we see that 
\begin{align*}
\Delta_{R,\OO_q(L_S)}(z^{\varpi_x}_i) \in \OO_q(G/L^{\,\mathrm{s}}_S) \otimes C(U_{\varpi_x}),
\end{align*}
which is to say, $z^{\varpi_x}_i$ is of degree $\varpi_x$. An analogous calculation shows that the degree of $\overline{z}^{\,\varpi_x}_i$ is $-\varpi_x$. Finally, since the $\mathcal{P}_{S^{\mathrm{c}}}$-grading is an algebra grading, it is completely determined by the degrees of the generators. 
\end{proof}


\subsection{A Principal $\ell$-Map}

We now produce an explicit principal $\ell$-map for our principal comodule algebra. This construction generalises the principal $\ell$-map constructed for the quantum Grassmannians in \cite[Corollary 4.13]{BWGrass}. For the special case of the Podle\'s sphere, the $\ell$-map reduces to the well-known $q$-monopole connection introduced in \cite{TBSM1, TBSMRevisited, HM99}.

In the statement of the proposition we find the following notation useful
\begin{align}
z_{\gamma} := \prod_{x \in S^{\mathrm{c}}} (\widecheck{z}^{\,\varpi_x})^{a_x}, & & \textrm{ for } \gamma = \sum_{x \in S^{\mathrm{c}}} a_x \varpi_x \in \mathcal{P}_{S^{\mathrm{c}}},
\end{align}
where we have denoted 
\begin{align*}
(\widecheck{z}^{\,\varpi_x})^{a_x} := (z^{\varpi_x})^{a_x}, ~~ \textrm{ if } a_x \geq 0, & & \textrm{ and } & & (\widecheck{z}^{\,\varpi_x})^{a_x} := (\overline{z}^{\,\varpi_x})^{a_x}, ~~ \textrm{ if }a_x < 0.
\end{align*}
Moreover, we find it convenient to denote 
\begin{align*}
t_{\gamma} := t_1^{a_1} \cdots \, t_{|S^{\mathrm{c}}|}^{a_{|S^{\mathrm{c}}|}}, & & \textrm{ for any } \gamma = \sum_{x \in S^{\mathrm{c}}} a_x \varpi_x \in \mathcal{P}_{S^{\mathrm{c}}}.
\end{align*}

\begin{prop}
For the Hopf--Galois extension $\OO_q(G/L_S) \hookrightarrow \OO_q(G/L_S^{\mathrm{s}})$, a principal $\ell$-map is given by 
\begin{align*}
\ell:\mathbb{C}\mathcal{P}_{S^c} \to \OO_q(G/L_S^{\mathrm{s}}) \otimes \OO_q(G/L_S^{\mathrm{s}}), & & t_{\gamma} \mapsto (S \otimes \id) \circ \Delta(z_{\gamma}).
\end{align*}
\end{prop}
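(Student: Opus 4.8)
The plan is to verify directly that the proposed map satisfies the four defining conditions of a principal $\ell$-map (Definition \ref{defn:ell}), for the comodule algebra $P := \qphs$ over the group algebra $H := \mathbb{C}\mathcal{P}_{S^c} = \pi_B(P)$, where $B = \OO_q(G/L_S)$ and each $t_\gamma$ is grouplike. Since the $t_\gamma$ form a basis of $H$, it suffices to check the conditions on these basis elements, using $\Delta(t_\gamma) = t_\gamma \otimes t_\gamma$, $\e(t_\gamma) = 1$, and $S(t_\gamma) = t_{-\gamma}$.

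First I would record the structural properties of $z_\gamma$ on which everything rests. Writing $z_\gamma = c^W_{F,V}$ as a single matrix coefficient of a suitable tensor-power module $W$ (using $c^V_{f,v}\, c^{V'}_{f',v'} = c^{V \otimes V'}_{f \otimes f',\, v \otimes v'}$), the highest/lowest weight nature of the generators $z^{\varpi_x}, \overline{z}^{\,\varpi_x}$ forces $V \in W$ to be a $\usslevi$-invariant weight vector of weight $\gamma$ and $F \in W^*$ to be a $\usslevi$-invariant weight covector of weight $-\gamma$. From this I would extract four facts: (i) $\e(z_\gamma) = F(V) = 1$; (ii) $\ell(t_\gamma) \in P \otimes P$, i.e.\ well-definedness, since in $(S \otimes \id)\Delta(z_\gamma) = \sum_k S(c^W_{F, w_k}) \otimes c^W_{f_k, V}$ the second legs lie in $P$ because $V$ is invariant, while the first legs lie in $P$ because $F$ is invariant (a matrix coefficient $S(c^W_{f,v})$ lies in $P$ whenever $f$ is $\usslevi$-invariant in $W^*$); (iii) the right homogeneity $\Delta_R(z_\gamma) = z_\gamma \otimes t_\gamma$, hence $\pi_B(z_\gamma) = t_\gamma$, which is the degree statement already established in \textsection\ref{subsection:QFMlinebundles}; and (iv) the left homogeneity $(\pi_B \otimes \id)\Delta(z_\gamma) = t_\gamma \otimes z_\gamma$, which follows from the invariance of $F$ by the argument dual to (iii).

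With these in hand the four conditions are formal. Condition 1 holds because $z_0 = 1$, so $\ell(t_0) = (S \otimes \id)(1 \otimes 1) = 1 \otimes 1$. Condition 2 is the antipode axiom together with (i): $m_P(\ell(t_\gamma)) = S((z_\gamma)_{(1)})(z_\gamma)_{(2)} = \e(z_\gamma)1 = 1 = \e(t_\gamma)1$. For condition 3 I would apply $\id \otimes \Delta_R$ to $\ell(t_\gamma)$, expand $\Delta_R = (\id \otimes \pi_B)\Delta$, and use coassociativity to move $\pi_B$ to the outermost leg; the identity $(\Delta \otimes \id)\Delta_R(z_\gamma) = (\Delta \otimes \id)(z_\gamma \otimes t_\gamma)$ coming from (iii) then reproduces $(\ell \otimes \id)\Delta(t_\gamma)$. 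For condition 4 I would compute $\Delta_L(S((z_\gamma)_{(1)}))$ via the anti-coalgebra property of $S$; the two applications of $S$ (one from $\ell$, one from $\Delta_L$) compose on the fiber leg to $S^2$, which is the identity because $H = \mathbb{C}\mathcal{P}_{S^c}$ is commutative, and the left homogeneity (iv) then collapses the result to $t_\gamma \otimes \ell(t_\gamma) = (\id \otimes \ell)\Delta(t_\gamma)$.

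The main obstacle is isolating the representation-theoretic content in the second step: that $z_\gamma$ is simultaneously right homogeneous, left homogeneous, and such that applying $S$ to the first leg of its coproduct keeps it inside $P$. All three reduce to the $\usslevi$-invariance of the extreme weight data $V$ and $F$, which is precisely where the specific choice of Stokman's generators enters. The remaining delicate point is condition 4, where one must be careful that the identity $S^2 = \id$ used on the fiber is legitimate: it holds because $H$ is a commutative (group) Hopf algebra, and not for $\OO_q(G)$ itself.
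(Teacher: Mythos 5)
Your proposal is correct and follows essentially the same route as the paper: well-definedness of $\ell$ via $\usslevi$-invariance of the extreme weight vectors/covectors, conditions 1--2 from $\e(z_\gamma)=1$ and the antipode axiom, and conditions 3--4 from the right/left homogeneity $\Delta_R(z_\gamma)=z_\gamma\otimes t_\gamma$ and $(\pi_B\otimes\id)\Delta(z_\gamma)=t_\gamma\otimes z_\gamma$. The differences are presentational rather than substantive: the paper reduces well-definedness to the generators $z^{\varpi_x}$ (using that $\Delta$ is an algebra map and $S$ an anti-algebra map) where you package $z_\gamma$ as a single matrix coefficient, and the paper dismisses left covariance as following ``analogously'' where you spell it out explicitly --- correctly identifying that the computation closes because the two antipodes compose to $S^2$ on the fiber leg, which is the identity on the commutative group algebra $\mathbb{C}\mathcal{P}_{S^c}$ even though $S^2\neq\id$ on $\OO_q(G)$.
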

\begin{proof} 
Let us first check that $\ell$ is well defined. Note first that since $\Delta$ is an algebra map, and $S$ is an anti-algebra map, it is enough to check that $\ell(z^{\varpi_x}) \in \O_q(G/L_S^{\mathrm{s}}) \otimes \O_q(G/L_S^{\mathrm{s}})$, for each $x \in \mathcal{P}_{S^c}$. Since 
\begin{align*}
 \ell(z^{\varpi_x}) = \sum_{a=1}^{N_x} S(z^{\varpi_x}_{f_{N_x},a}) \otimes z^{\varpi_x}_{a,f_{N_x}},
\end{align*}
this amounts to showing that $S(z^{\varpi_x}_{f_{N_x},a}) \in \OO_q(G/L_S^{\mathrm{s}})$, but this follows from the fact that 
\begin{align*}
X \triangleright S(z^{\varpi_x}_{f_{N_x},a}) = S(z^{\varpi_x}_{X \triangleright f_{N_x},a}) = \e(X) S(z^{\varpi_x}_{f_{N_x},a}), & & \textrm{ for all } X \in U_q(\frak{l}_S).
\end{align*}
Thus $\ell$ is indeed well defined.

By definition it holds that $\ell(1_{\mathbb{C}\mathcal{P}_{S^c}}) = 1_{\OO_q(G/L^{\mathrm{s}}_S)}$. Moreover, it is clear that $$
m_{\OO_q(G/L^{\mathrm{s}}_S)} \circ \ell = \e 1_{\OO_q(G/L^{\mathrm{s}}_S)}.
$$
Right $\mathbb{C}\mathcal{P}_{S^c}$-covariance of $\ell$ follows from the calculation 
\begin{align*}
(\id \otimes \Delta_R) \circ \ell(t_{\gamma}) = & \, (\id \otimes \Delta_R) \circ (S \otimes \id) \circ \Delta(z_{\gamma}) \\
= & \, (S \otimes \id \otimes \id) \circ (\Delta \otimes \id) \circ
(\Delta_R)(z_{\gamma})\\
= & \, (S \otimes \id \otimes \id) \circ (\Delta \otimes \id)(z_{\gamma} \otimes t_{\gamma})\\
= & \, ((S \otimes \id) \circ \Delta(z_{\gamma})) \otimes t_{\gamma}\\
= & \, \ell(t_{\gamma}) \otimes t_{\gamma}\\
= & \, (\ell \otimes \id) \circ \Delta(t_{\gamma}).
\end{align*}
Left $\mathbb{C}\mathcal{P}_{S^c}$-covariance of $\ell$ follows analogously, and so, we have a principal $\ell$-map.
\end{proof}


\begin{remark}
As explained in Appendix \ref{subsection:PrinConns}, associated to the principal $\ell$-map constructed above we have a principal connection $\Pi_{\ell}$ acting on the universal calculus of $\OO_q(G/L^{\,\mathrm{s}}_S)$. As is easily checked, the map $\Pi_{\ell}$ is a left $\OO_q(G)$-comodule map, meaning that every associated relative Hopf module connection $\nabla$ is also an  $\OO_q(G)$-comodule map. This extends the situation for the quantum Grassmannians discussed in \cite[Corollary 4.13]{BWGrass}.
\end{remark}

\subsection{Non-Cleftness of the Hopf--Galois Extension}

Let us recall the definition of a cleft comodule algebra, which can be viewed as a noncommutative generalisation of a trivial bundle: For $H$ a Hopf algebra, we say that a right $H$-comodule algebra $(P,\Delta_{R,H})$ is \emph{cleft} if there exists a convolution invertible right $H$-comodule map $j:H \to P$, which is necessarily injective. We call such a $j$ a \emph{cleaving map}. If $(P,\Delta_{R,H})$ is a Hopf--Galois extension of $B = P^{\co(H)}$, then the existence of a cleaving map is equivalent to $A$ having the \emph{normal basis property}, which is to say, equivalent to the existence of a left $B$-module and right $H$-comodule isomorphism $A \simeq B \otimes H$, see \cite[Proposition 2.3]{B99} for details.

For the special case of the Podle\'s sphere, it was shown in \cite{HM99} that the Hopf--Galois extension $\OO_q(S^2) \hookrightarrow \OO_q(SU_2)$ is not cleft.  We now extend this result to include all quantum flag manifolds. 

\begin{prop}
The Hopf--Galois extension $\OO_q(G/L_S) \hookrightarrow \OO_q(G/L_S^{\,\mathrm{s}})$ is non-cleft.
\end{prop}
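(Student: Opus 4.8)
The plan is to reduce cleftness to an elementary statement about the homogeneous components $\EE_{\gamma}$ of the strong $\mathcal{P}_{S^c}$-grading, and then to rule it out by showing that $\OO_q(G/L_S^{\,\mathrm{s}})$ has no invertible elements outside the scalars. Recall that the structure Hopf algebra of the extension is the group algebra $\mathbb{C}\mathcal{P}_{S^c}$, with grouplike basis $\{t_{\gamma}\}_{\gamma \in \mathcal{P}_{S^c}}$, and that a cleaving map is by definition a convolution invertible right $\mathbb{C}\mathcal{P}_{S^c}$-comodule map $j:\mathbb{C}\mathcal{P}_{S^c} \to \OO_q(G/L_S^{\,\mathrm{s}})$. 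Since $\Delta(t_{\gamma}) = t_{\gamma} \otimes t_{\gamma}$, the comodule-map condition forces $\Delta_R(j(t_{\gamma})) = j(t_{\gamma}) \otimes t_{\gamma}$, that is, $j(t_{\gamma}) \in \EE_{\gamma}$; and evaluating the convolution identities $j * j^{-1} = j^{-1} * j = \e\, 1$ on $t_{\gamma}$ gives $j(t_{\gamma})\, j^{-1}(t_{\gamma}) = j^{-1}(t_{\gamma})\, j(t_{\gamma}) = 1$, so each $j(t_{\gamma})$ is a unit of $\OO_q(G/L_S^{\,\mathrm{s}})$. Conversely, any choice of units $u_{\gamma} \in \EE_{\gamma}$ assembles into a cleaving map. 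Hence the extension is cleft if and only if every homogeneous component $\EE_{\gamma}$ contains an invertible element of the total algebra.

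The second step is to show that the only units of $\OO_q(G/L_S^{\,\mathrm{s}})$ are the nonzero scalars. As $\OO_q(G/L_S^{\,\mathrm{s}}) \subseteq \OO_q(G)$ and an element invertible in the subalgebra is invertible (with the same inverse) in $\OO_q(G)$, it suffices to prove that the unit group of $\OO_q(G)$ is $\mathbb{C}^{\times}$. For this I would use the highest-weight filtration: fix a linear functional $\phi$ on $\mathcal{P}$ that is non-negative on $\mathcal{P}^+$ and strictly positive on $\mathcal{P}^+ \setminus \{0\}$, and set $F_t := \bigoplus_{\phi(\mu) \le t} C(V_{\mu})$. Because $V_{\mu} \otimes V_{\nu}$ decomposes into irreducibles $V_{\lambda}$ with $\lambda \le \mu + \nu$ and with $V_{\mu+\nu}$ occurring exactly once, one has $C(V_{\mu})\, C(V_{\nu}) \subseteq \bigoplus_{\lambda \le \mu+\nu} C(V_{\lambda})$, so this is an algebra filtration whose associated graded $\mathrm{gr}\,\OO_q(G) \cong \bigoplus_{\mu} C(V_{\mu})$ carries the Cartan (top-component) product. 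This associated graded is $\mathcal{P}^+$-graded with one-dimensional degree-zero part $C(V_0) = \mathbb{C}1$, and it is an integral domain; consequently a unit $u$ of $\OO_q(G)$ must lie in $F_0 = \mathbb{C}1$, since the leading symbols of $u$ and $u^{-1}$ are nonzero in a domain and so multiply, without cancellation, to the symbol of $1 = u u^{-1}$, which has degree zero, forcing both symbols (and hence $u$) into degree zero.

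Combining the two steps concludes the proof: the units of $\OO_q(G/L_S^{\,\mathrm{s}})$ are exactly the scalars, which all lie in the identity component $\EE_0 = \OO_q(G/L_S)$. Since $S$ is a proper subset of the simple roots, $S^c \neq \varnothing$ and $\mathcal{P}_{S^c} \neq 0$, so there exists $\gamma \neq 0$ (for instance $\gamma = \varpi_x$ with $x \in S^c$) for which $\EE_{\gamma}$ contains no invertible element; by the first step the extension cannot be cleft. I expect the genuine obstacle to be the second step, and specifically the input that $\mathrm{gr}\,\OO_q(G)$ (a $q$-analogue of the coordinate ring of the doubled base affine space) is an integral domain: this is where one must either invoke the known integrality results for quantized coordinate algebras or supply a direct argument, the rest being formal.
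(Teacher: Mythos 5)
Your strategy is correct and genuinely different from the paper's. The two arguments share the first step: a cleaving map $j$ would force $j(t_{\varpi_x})$, for $x \in S^{\mathrm{c}}$, to be a unit of $\OO_q(G/L_S^{\,\mathrm{s}})$ lying in the homogeneous component $\EE_{\varpi_x}$ (the paper uses only this forward implication, for a single degree; your ``if and only if'' refinement is correct but not needed). The divergence is in how units of non-zero degree are ruled out. The paper composes with the Hopf algebra surjection $\pi_x:\OO_q(G) \to \OO_q(SU_2)$ dual to the inclusion $U_q(\frak{sl}_2) \hookrightarrow U_q(\frak{g})$ determined by the node $x$, checks via the $K$-action that $\pi_x(j(t_x))$ has non-zero degree in the $\mathbb{Z}$-grading of $\OO_q(SU_2)$, and then invokes the result of \cite{HM99} that the only units of $\OO_q(SU_2)$ are scalars; this keeps the entire proof at rank one, where the unit computation is already in the literature. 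You instead prove the stronger statement that the unit group of $\OO_q(G)$ itself is $\mathbb{C}^{\times}$, via the dominance filtration whose associated graded is the Cartan-product algebra. What this buys is uniformity (triviality of units in every quantum homogeneous $\OO_q(G)$-space at once, with no case-by-case projection); what it costs is a heavier input, namely that the associated graded is a domain. You flag this honestly, and it is indeed available: it is precisely the engine behind the known theorem that $\OO_q(G)$ is a noetherian domain (Joseph; Brown--Goodearl), so it can be cited rather than reproved. Note, however, that citing only ``$\OO_q(G)$ is a domain'' would not suffice --- domains can have non-scalar units (Laurent polynomials) --- so the citation must be to the graded statement, as your argument in fact requires.

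One concrete technical slip needs repair: your positivity condition on the functional $\phi$ (non-negative on $\mathcal{P}^+$, strictly positive on $\mathcal{P}^+\setminus\{0\}$) is not the right hypothesis. For the filtration to be multiplicative, and for strictly dominance-lower constituents of $V_{\mu}\otimes V_{\nu}$ to land in strictly lower filtration degree, you need $\phi$ to be strictly positive on the non-zero elements of the positive root lattice, i.e.\ on the simple roots; positivity on dominant weights does not imply this. For example, in type $A_2$ the functional with $\phi(\varpi_1)=1$, $\phi(\varpi_2)=3$ is strictly positive on $\mathcal{P}^+\setminus\{0\}$, yet $\phi(\alpha_1)=2\phi(\varpi_1)-\phi(\varpi_2)=-1<0$; then in $C(V_{\varpi_1})C(V_{\varpi_1})$ the constituent $C(V_{\varpi_2})$ (from $V_{\varpi_1}\otimes V_{\varpi_1} \simeq V_{2\varpi_1}\oplus V_{\varpi_2}$) sits in \emph{higher} $\phi$-degree than the Cartan component $C(V_{2\varpi_1})$, so your filtration is not an algebra filtration and the leading symbols are not the Cartan ones. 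The fix is standard: take $\phi = (\cdot\,,2\rho)$, which satisfies $\phi(\alpha_i)=(\alpha_i,\alpha_i)>0$ and hence is also strictly positive on $\mathcal{P}^+\setminus\{0\}$; with this choice the rest of your argument goes through as written.
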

\begin{proof}
Let us assume that we have a convolution invertible right  $\mathbb{C}\mathcal{P}_S$-comodule map 
$$
j:\mathbb{C}\mathcal{P}_S \to \O_q(G/L^{\,s}_S).
$$
Choose an element $x \in S^c$. The fact that $j$ is convolution invertible, and that $t_{x}$ is grouplike, means that $j(t_{x})$ is invertible. Moreover, since $j$ is a $\mathbb{C}\mathcal{P}_S$-comodule map, $j(t_{x})$ must have degree $\varpi_x$.

Consider the associated inclusion $i_x:U_q(\frak{sl}_2) \hookrightarrow U_q(\frak{g})$,  
along with the dual Hopf algebra surjection $\pi_x:\OO_q(G) \to \OO_q(SU_2)$. Taking $K \in U_q(\frak{sl}_2)$, we see that 
\begin{align*}
K \triangleright \pi_x(j(t_x)) 
=  \, \pi_x(j(t_x)_{(1)})  \langle K_x, j(t_x)_{(2)} \rangle 
=  \, q^{(\alpha_x,\alpha_x)/2}\pi_x(j(t_x)).
\end{align*}
Thus, with respect to the $\mathbb{Z}$-grading on $\OO_q(SU_2)$ induced by the action of the generators $K$ and $K^{-1}$, the element $\pi_x(j(t_{x}))$ has non-zero degree.

Since $\pi_x$ is an algebra map, the element $\pi_x(j(t_x))$ is necessarily invertible in $\OO_q(SU_2)$. It was established in \cite[Appendix]{HM99} that the only invertible elements in $\OO_q(SU_2)$ are scalar multiples of the identity. However, this contradicts the fact that $\pi_x(j(t_x))$ has non-zero degree. Thus we conclude that no such $j$ exists, which is to say, that the Hopf--Galois extension is non-cleft.
\end{proof}

\subsection{Schneider's Equivalence and a Family of Examples} \label{subsection:IrredQFM}

Since $\qphs$ and $\OO_q(G/L_S)$ form a principal pair, the equivalence
\begin{align*}
^{~~~~\OO_q(G)}_{\OO_q(G/L^{\mathrm{s}}_S)}\mathrm{Mod}^{\mathrm{gr}(\mathcal{P}_{S^{\mathrm{c}}})} ~ \simeq ~ {}^{\OO_q(L_S)}\mathrm{Mod}
\end{align*}
follows immediately from \eqref{eqn:ASchneiderGAPP}. In this subsection, we present a naturally occurring family of objects, in the left hand side category, coming from quantum group noncommutative geometry.

A quantum flag manifold $\OO_q(G/L_S)$ is said to be \emph{irreducible} if $S^{\mathrm{c}}$ contains a single root vector $\alpha_x$ and this root vector appears in any positive root of $\frak{g}$ with coefficient at most one. In \cite{HK,HKdR} Heckenberger and Kolb introduced a left $\OO_q(G)$-covariant first-order differential calculus $\Omega^1_q(G/L_S)$ for each irreducible quantum flag manifold $\OO_q(G/L_S)$. These calculi are of classical dimension and $q$-deform the classical space of $1$-forms over $G/L_S$. These celebrated calculi are objects of central importance for the noncommutative geometry of quantum groups, and are naturally objects in the category of relative Hopf modules $^{~~~~\OO_q(G)}_{\OO_q(G/L_S)}\mathrm{Mod}$.

For the irreducible case, the weight sublattice $\mathcal{P}_{S^{\mathrm{c}}}$ is isomorphic to $\mathbb{Z}$, and so, the category ${}^{~~~~\OO_q(G)}_{\OO_q(G/L^{\mathrm{s}}_S)}\mathrm{Mod}^{\mathrm{gr}(\mathcal{P}_{S^{\mathrm{c}}})}$ specialises to the category
$$
{}^{~~~~\OO_q(G)}_{\OO_q(G/L^{\mathrm{s}}_S)}\mathrm{Mod}^{\mathrm{gr}(\mathbb{Z})}.
$$
In \cite[\textsection 3.2]{HKdR} Heckenberger and Kolb constructed left $\OO_q(G)$-covariant first-order calculi $\Omega^1_q(G/L^{\,\mathrm{s}}_S)$ for the associated quantum Poisson homogeneous spaces $\OO_q(G/L^{\,\mathrm{s}}_S)$. These calculi are naturally objects in the 
category ${}^{~~~~\OO_q(G)}_{\OO_q(G/L^{\mathrm{s}}_S)}\mathrm{Mod}$. Moreover, as shown in \cite{CDOBBW}, each calculus is homogeneous with respect to the $\mathbb{Z}$-grading on $\OO_q(G/L^{\,\mathrm{s}}_S)$, and so,
$$
\Omega^1_q(G/L^{\,\mathrm{s}}_S) \in {}^{~~~~\OO_q(G)}_{\OO_q(G/L^{\mathrm{s}}_S)}\mathrm{Mod}^{\mathrm{gr}(\mathbb{Z})}.
$$
It then follows from \cite[Proposition 3.10]{BWGrass} that $\Omega^1_q(G/L^{\,\mathrm{s}}_S)$ gives a quantum principal bundle over the base $\OO_q(G/L_S)$. This is a key step in the proof of the Borel--Weil theorem for the irreducible quantum flag manifolds given in \cite{CDOBBW}.


\section{Examples of Non-Principal Noncommutative Fibrations}

In this section we construct two families of non-principal noncommutative fibrations, and realise many well-known examples of quantum homogeneous spaces as noncommutative fibers.

\subsection{Non-Principal Examples with Irreducible Quantum Flag Manifold Bases} 

For any Drinfeld--Jimbo quantum group $U_q(\frak{g})$, and a pair of subsets $S_P \sseq S_B\subseteq \Pi$, we have the triple of algebras 
$$
\OO_q(G/L_{S_B}) \hookrightarrow \OO_q(G/L_{S_P}) \twoheadrightarrow \OO_q(L_{S_P}/L_{S_B}),
$$
where we have denoted 
$$
\OO_q(L_{S_B}/L_{S_P}) := \OO_q(L_{S_B})^{\co\left(\OO_q(L_{S_P})\right)}.
$$
As discussed in Remark \ref{remark:noncommutativefibration}, we think of this triple as a type of noncommutative homogeneous fibration. For special cases we get fibrations built from well-known examples of quantum homogeneous spaces. Here we present a series of special cases where the base of the fibration is an irreducible quantum flag manifold, as discussed in \textsection \ref{subsection:IrredQFM}. Throughout we use Humphrey’s numbering of the Dynkin nodes \cite[\textsection 11.4]{Humph} of $\frak{g}$.

Note that for any two subsets $S_B \subseteq S_P \subseteq \Pi$, we have an obvious algebra isomorphism 
$$
\OO_q(L_{S_B}/L_{S_P}) \simeq \OO_q(L^{\mathrm{s}}_{S_B})^{\co\left(\OO_q(L_{S_P \cap S_B})\right)}.
$$
In fact, this is an isomorphism of $\OO_q(L^{\mathrm{\,s}}_{S_B})$-comodules. A motivating example here is the presentation of $\OO_q(\mathbb{CP}^{n})$ as a quantum homogeneous  $\OO_q(U_{n+1})$-space. We will tacitly use this isomorphism in the examples that follow.

\begin{eg}
We begin with the example introduced by Brzezi\'{n}ski and Szyma\'{n}ski \cite{MKSzy20} as a motivating example for a proposed theory of \emph{noncommutative fiber bundles} \cite{TBWS}. For the Drinfeld--Jimbo quantum group $U_q(\frak{sl}_3)$, we choose $S^c_B = \{\alpha_2\}$ and $S^c_P = \{\alpha_1, \alpha_2\},$ which we represent respectively with the coloured Dynkin diagrams
\begin{center}
\begin{tikzpicture}[scale=.5]
\draw
(0,0) circle [radius=.25];

\draw[fill=black]
(2,0) circle [radius=.25];

\draw[thick]
(.25,0) -- (1.75,0);
\end{tikzpicture} \, ~~~~~ and ~~~~~ \, \begin{tikzpicture}[scale=.5]
\draw
;

\draw[fill=black]
(0,0) circle [radius=.25]
(2,0) circle [radius=.25];

\draw[thick]
(.25,0) -- (1.75,0);
\end{tikzpicture}.
\end{center}
This gives us the noncommutative fibration 
$$
\OO_q(\mathbb{CP}^2) \hookrightarrow \OO_q(F_{SU_3}) \twoheadrightarrow \OO_q(S^2), 
$$
where $\OO_q(\mathbb{CP}^2)$ is the quantum projective plane, and $\OO_q(S^2) = \OO_q(\mathbb{CP}^1)$ is the standard Podle\'s sphere.
\end{eg}

We now generalise this construction to higher orders. There are two natural ways to do this. First we consider the case where the fiber and the total space are higher order full quantum flag manifolds, where we note that the Podle\'s sphere is in fact the full quantum flag manifold of $\OO_q(SU_2)$.

\begin{eg}
For the Drinfeld--Jimbo quantum group $U_q(\frak{sl}_{n+1})$, we choose $S^c_B = \{\alpha_{n}\}$ and $S^c_P = \Pi$ which we represent  graphically with the respective coloured Dynkin diagrams
\begin{center}
\begin{tikzpicture}[scale=.5]
\draw
(0,0) circle [radius=.25]
(6,0) circle [radius=.25] 
(2,0) circle [radius=.25] 
(4,0) circle [radius=.25] ; 

\draw[fill=black]
(8,0) circle [radius=.25];

\draw[thick,dotted]
(4.25,0) -- (5.75,0);
\draw[thick]
(.25,0) -- (1.75,0)
(2.25,0) -- (3.75,0)
(6.25,0) -- (7.75,0);
\end{tikzpicture} \, ~~~~~ and ~~~~~ \, \begin{tikzpicture}[scale=.5]


\draw[fill=black]
(0,0) circle [radius=.25]
(2,0) circle [radius=.25] 
(4,0) circle [radius=.25] 
(6,0) circle [radius=.25]
(8,0) circle [radius=.25];

\draw[thick,dotted]
(4.25,0) -- (5.75,0);
\draw[thick]
(.25,0) -- (1.75,0)
(2.25,0) -- (3.75,0)
(6.25,0) -- (7.75,0);
\end{tikzpicture}.
\end{center}
This gives us the fibration 
$$
\OO_q(\mathbb{CP}^n) \hookrightarrow \OO_q(F_{SU_{n+1}}) \twoheadrightarrow \OO_q(F_{SU_{n}}),
$$
where $\OO_q(\mathbb{CP}^n)$ is quantum projective $n$-space.
\end{eg}

We now consider an alternative generalisation of Brzezi\'{n}ski and Szyma\'{n}ski's fibration, where both the base and the fiber are higher order quantum projective spaces. 

\begin{eg}
For the Drinfeld--Jimbo quantum group $U_q(\frak{sl}_{n+1})$, we choose $S^c_B = \{\alpha_{n}\}$ and $S^c_P = \{\alpha_1, \alpha_{n}\},$ which we represent  graphically with the respective coloured Dynkin diagrams
\begin{center}
\begin{tikzpicture}[scale=.5]
\draw
(0,0) circle [radius=.25]
(6,0) circle [radius=.25] 
(2,0) circle [radius=.25] 
(4,0) circle [radius=.25] ; 

\draw[fill=black]
(8,0) circle [radius=.25];

\draw[thick,dotted]
(4.25,0) -- (5.75,0);
\draw[thick]
(.25,0) -- (1.75,0)
(2.25,0) -- (3.75,0)
(6.25,0) -- (7.75,0);
\end{tikzpicture} \, ~~~~~ and ~~~~~ \, \begin{tikzpicture}[scale=.5]
\draw
(2,0) circle [radius=.25] 
(4,0) circle [radius=.25] 
(6,0) circle [radius=.25] ; 
\draw[fill=black]
(0,0) circle [radius=.25]
(8,0) circle [radius=.25];

\draw[thick,dotted]
(4.25,0) -- (5.75,0);
\draw[thick]
(.25,0) -- (1.75,0)
(2.25,0) -- (3.75,0)
(6.25,0) -- (7.75,0);
\end{tikzpicture}.
\end{center}
This gives us the fibration 
$$
\OO_q(\mathbb{CP}^{n}) \hookrightarrow \OO_q\big(SU_{n+1}/(U_{n-1} \times U_1)\big) \twoheadrightarrow \OO_q(\mathbb{CP}^{n-1}),
$$ 
where we note that for $n=2$, the quantum flag manifold $\OO_q\big(SU_{n+1}/U_{n-1} \times U_1\big)$ reduces to the full quantum flag manifold of $\OO_q(SU_3)$.
\end{eg}

There is no need to confine our attention to the $A$-series, so let us consider a $C$-series example. 

\begin{eg}
For the Drinfeld--Jimbo quantum group $U_q(\frak{sp}_{2n})$, we choose $S^c_B = \{\alpha_{n}\}$ and $S^c_P = \Pi$ which we represent graphically  with the respective coloured Dynkin diagrams
\begin{center}
\begin{tikzpicture}[scale=.5]
\draw
(0,0) circle [radius=.25] 
(2,0) circle [radius=.25] 
(4,0) circle [radius=.25] 
(6,0) circle [radius=.25] ; 
\draw[fill=black]
(8,0) circle [radius=.25];

\draw[thick]
(.25,0) -- (1.75,0);

\draw[thick,dotted]
(2.25,0) -- (3.75,0)
(4.25,0) -- (5.75,0);

\draw[thick] 
(6.25,-.06) --++ (1.5,0)
(6.25,+.06) --++ (1.5,0); 

\draw[thick]
(7,0) --++ (60:.2)
(7,0) --++ (-60:.2);
\end{tikzpicture}\, ~~~~~ and ~~~~~ \, \begin{tikzpicture}[scale=.5]

\draw[fill=black]
(0,0) circle [radius=.25] 
(2,0) circle [radius=.25] 
(4,0) circle [radius=.25] 
(6,0) circle [radius=.25]
(8,0) circle [radius=.25];

\draw[thick]
(.25,0) -- (1.75,0);

\draw[thick,dotted]
(2.25,0) -- (3.75,0)
(4.25,0) -- (5.75,0);

\draw[thick] 
(6.25,-.06) --++ (1.5,0)
(6.25,+.06) --++ (1.5,0); 

\draw[thick]
(7,0) --++ (60:.2)
(7,0) --++ (-60:.2);
\end{tikzpicture}.
\end{center}
This gives us the noncommutative fibration 
$$
\OO_q(\mathbf{L}_{n}) \hookrightarrow \OO_q(F_{Sp_{n}}) \to \OO_q(F_{SU_n}),
$$
where $\OO_q(\mathbf{L}_{n})$ is the quantum Lagrangian Grassmannian. In the commutative case, $\mathbf{L}_n$ is the space of complex structures on $\mathbb{H}^n$ compatible with the canonical inner product. 
\end{eg}


Let us next consider an example from the exceptional Drinfeld--Jimbo quantum groups.

\begin{eg}
For the Drinfeld--Jimbo quantum group $U_q(\frak{e}_6)$, we choose $S^c_B = \{\alpha_{6}\}$ and $S^c_P = \{\alpha_5,\, \alpha_6\},$ which we represent graphically with the respective coloured Dynkin diagrams
\begin{center}
\begin{tikzpicture}[scale=.5]
\draw
(0,0) circle [radius=.25] 
(2,0) circle [radius=.25] 
(4,0) circle [radius=.25] 
(4,1.5) circle [radius=.25]
(6,0) circle [radius=.25] ;

\draw[fill=black] 
(8,0) circle [radius=.25];
\draw[thick]
(.25,0) -- (1.75,0)
(2.25,0) -- (3.75,0)
(4.25,0) -- (5.75,0)
(6.25,0) -- (7.75,0)
(4,.25) -- (4, 1.25);
\end{tikzpicture} \, ~~~~~ and ~~~~~ \, \begin{tikzpicture}[scale=.5]
\draw
(0,0) circle [radius=.25] 
(2,0) circle [radius=.25] 
(4,0) circle [radius=.25] 
(4,1.5) circle [radius=.25];

\draw[fill=black] 
(6,0) circle [radius=.25] 
(8,0) circle [radius=.25];
\draw[thick]
(.25,0) -- (1.75,0)
(2.25,0) -- (3.75,0)
(4.25,0) -- (5.75,0)
(6.25,0) -- (7.75,0)
(4,.25) -- (4, 1.25);
\end{tikzpicture}.
\end{center}
This gives us the quantum homogeneous fibration 
$$
\OO_q(\mathbb{OP}^2) \hookrightarrow \OO_q(E_6/(SO_{10} \times U_1)) \twoheadrightarrow \OO_q(\mathbf{S}_5),
$$
where $\OO_q(\mathbb{OP}^2)$ is the quantum Caley plane, that is, the irreducible quantum flag manifold of $\OO_q(E_6)$, and $\OO_q(\mathbf{S}_5)$ is the quantum spinor variety, one of the two $D_5$ irreducible quantum flag manifolds. 
In the classical case $\mathbf{S}_5$ is the space of orthogonal complex structures on $\mathbb{R}^{10}$. 
\end{eg}


%

\subsection{Noncommutative Sphere Fibrations and Quantum Stiefel Manifolds} 

In this subsection we generalise a different aspect of Brzezi\'{n}ski and Szyma\'{n}ski's noncommutative fibration, producing fibrations with higher order quantum spheres as \emph{noncommutative fibers}. Note first that, for any complex semisimple Lie algebra $\frak{g}$ and any two subsets $S_P \sseq S_B \subseteq \Pi$, a quantum homogeneous fibration is given by the triple
$$
\OO_q(G/L_{S_B}^{\mathrm{s}}) \hookrightarrow \OO_q(G/L^{\mathrm{s}}_{S_P}) \twoheadrightarrow \OO_q(L^{\mathrm{s}}_{S_B}/L^{\mathrm{s}}_{S_P}),
$$
where we have denoted by $\OO_q(L^{\mathrm{s}}_{S_B}/L^{\mathrm{s}}_{S_P})$ the space of $\OO_q(L^{\,\mathrm{s}}_{S_P})$-coinvariants of $\OO_q(L^{\mathrm{s}}_{S_B})$. As we will see, for special cases we get fibrations built from well-known examples of quantum homogeneous spaces.

With respect to Humphrey's numbering of the Dynkin nodes \cite[\textsection 11.4]{Humph}, for any non-exceptional simple Lie algebra $\frak{g}$, we denote
\begin{align*}
S_{> m} := \{\alpha_{m+1}, \dots, \alpha_{\mathrm{n}}\}.
\end{align*}
Associated to the Hopf subalgebra $U_q(\frak{l}^{\mathrm{\,s}}_{S_{>m}})$ and its quantum subgroup $\OO_q(L_{S_{>m}})$, we have the quantum homogeneous $\OO_q(G)$-space of 
$\OO_q(L_{S_{>m}})$-coinvariants, which we call the \emph{quantum $m$-Stiefel manifold} of $\OO_q(G)$.

\begin{eg}
It is instructive to observe that each Stiefel manifold is the total space of a principle comodule algebra. Indeed, by the results of \textsection \ref{subsection:QFMPP}, the quantum Stiefel manifold associated to any $S_{>m}$ is a principal $\mathbb{C}\mathcal{P}_{S^c_{>m}}$-comodule algebra over the quantum flag manifold $\OO_q(G/L_{S_{>m}})$. 
\end{eg}

Let us now place some examples of quantum Stiefel manifolds into  $q$-deformed non-principal fibrations.

\begin{eg}
Consider the Drinfeld--Jimbo quantum group $U_q(\frak{sl}_{n+1})$, for $n \geq 2$, and the subset $S_{>3}$ of simple nodes, for $m = 1, \dots, n$. In this case the quantum Stiefel manifolds reduce to the quantum complex Stiefel manifolds $\OO_q(V_p\mathbb{C}^{n+1})$ introduced by Podkolzin and Vainerman \cite{PodV}, and further studied in \cite{Bip20}. Note that for the degenerate case of $m=1$, we get back the quantum spheres $\OO_q(S^{2n-1})$ of Vaksman and Soibelman \cite{VaksS}.

Choosing $S_B = S_{>1}$ and $S_P = S_{>2}$, which we represent graphically with the respective coloured Dynkin diagrams
\begin{center}
\begin{tikzpicture}[scale=.5]
\draw
(2,0) circle [radius=.25]
(4,0) circle [radius=.25] 
(6,0) circle [radius=.25] 
(8,0) circle [radius=.25];

\draw[fill=black]
(0,0) circle [radius=.25];

\draw[thick,dotted]
(4.25,0) -- (5.75,0);
\draw[thick]
(.25,0) -- (1.75,0)
(2.25,0) -- (3.75,0)
(6.25,0) -- (7.75,0);
\end{tikzpicture} \, ~~~~~ and ~~~~~ \, \begin{tikzpicture}[scale=.5]


\draw[fill=black]
(0,0) circle [radius=.25]
(2,0) circle [radius=.25];
\draw
(4,0) circle [radius=.25] 
(6,0) circle [radius=.25]
(8,0) circle [radius=.25];

\draw[thick,dotted]
(4.25,0) -- (5.75,0);
\draw[thick]
(.25,0) -- (1.75,0)
(2.25,0) -- (3.75,0)
(6.25,0) -- (7.75,0);
\end{tikzpicture},
\end{center}
we get the quantum homogeneous fibration 
$$
\OO_q(S^{2n+1}) \hookrightarrow \OO_q(V_2\mathbb{C}^{n+1}) \twoheadrightarrow \OO_q(S^{2n-1}).
$$
Note that for the special case of $n=3$, the fiber reduces to $\OO_q(S^3) = \OO_q(SU_2)$, so the fibration is actually principal.
\end{eg}


\begin{eg}
For the Drinfeld--Jimbo quantum group $U_q(\frak{sp}_{2n})$, we choose $S_B = S_{> 1}$ and $S_P = S_{> 3},$ which we represent  graphically  with the respective coloured Dynkin diagrams
\begin{center}
\begin{tikzpicture}[scale=.5]
\draw[fill=black]
(0,0) circle [radius=.25]; 
\draw
(2,0) circle [radius=.25] 
(4,0) circle [radius=.25] 
(6,0) circle [radius=.25] 
(8,0) circle [radius=.25];

\draw[thick]
(.25,0) -- (1.75,0);

\draw[thick,dotted]
(2.25,0) -- (3.75,0)
(4.25,0) -- (5.75,0);

\draw[thick] 
(6.25,-.06) --++ (1.5,0)
(6.25,+.06) --++ (1.5,0); 

\draw[thick]
(7,0) --++ (60:.2)
(7,0) --++ (-60:.2);
\end{tikzpicture}\, ~~~~~ and ~~~~~ \, \begin{tikzpicture}[scale=.5]
\draw[fill=black]
(0,0) circle [radius=.25] 
(2,0) circle [radius=.25];

\draw
(4,0) circle [radius=.25] 
(6,0) circle [radius=.25]
(8,0) circle [radius=.25];

\draw[thick]
(.25,0) -- (1.75,0);

\draw[thick,dotted]
(2.25,0) -- (3.75,0)
(4.25,0) -- (5.75,0);

\draw[thick] 
(6.25,-.06) --++ (1.5,0)
(6.25,+.06) --++ (1.5,0); 

\draw[thick]
(7,0) --++ (60:.2)
(7,0) --++ (-60:.2);
\end{tikzpicture}.
\end{center}

The Stiefel manifold corresponding to $S_{> 1}$ is a $q$-deformation of the $C$-series presentation of the sphere $S^{4n-1}$ as the homogeneous sphere 
$$
S^{4n-1} \simeq Sp_{2n}/Sp_{2(n-1)}.
$$
To differentiate it from the $A$-series quantum sphere, we denote it by $\OO_q(S^{4n-1}_{\mathbb{H}})$ and call it the \emph{quantum quaternionic sphere}. The quantum quaternionic sphere has appeared a number of times in the literature. It was studied in \cite{GPR05} and \cite{BS19FF, TBWS,MKSzy20} as the total space of a noncommutative instanton fibration, where it was called the \emph{quantum symplectic sphere}. Moreover, the representations and $K$-theory of its $C^*$-algebra completion were examined in \cite{Sau2015}.

The Stiefel manifold corresponding to $S_{> 2}$ is a $q$-deformation of the coordinate algebra of the classical quaternionic Stiefel manifold $V_2\mathbb{H}^n$. Hence we call it the \emph{quantum quaternionic $2$-plane Stiefel manifold} and denote it by $\O_q(V_2\mathbb{H}^n)$. 

Our choice of $S_B$ and $S_P$ gives the quantum homogeneous fibration 
$$
\OO_q(S^{4n-1}_{\mathbb{H}}) \hookrightarrow \OO_q(V_2\mathbb{H}^n) \twoheadrightarrow \OO_q(S_{\mathbb{H}}^{4n-5}).
$$
Note that for the degenerate case of $n=2$, the quantum sphere $\OO_q(S_{\mathbb{H}}^{4n-5})$ reduces to the Hopf algebra $\OO_q(S^3) = \OO_q(SU_2)$, and so, the fibration is principal.
\end{eg}

\appendix

\section{Some Comments on Quantum Homogeneous Spaces}

In this paper we use a definition of a quantum homogeneous space that is different (although equivalent) to the one used in our previous works. Thus we spend some time discussing the various formulations of the definition. Motivated by the important role relative line modules play in the paper, we also discuss some results about simple relative Hopf modules.

\subsection{Quantum Homogeneous Spaces as Principal Comodule Algebras}

 For any quantum homogeneous space $B = A^{\co(\pi_B(A))}$, it is well-known that $A$ is automatically a Hopf--Galois extension of $B$, see for example \cite[Lemma 3.9]{T72}. Thus it follows from \cite[Theorem 1]{MulSch} that $A$ is also faithfully flat as a left $B$-module, and hence that it is a principal comodule algebra.

Cosemisimplicity allows us to verify the faithful flatness condition for a general coideal subalgebra $B \subseteq A$ satisfying $B^+A = AB^+$. Explicitly, if the quotient Hopf algebra  $A/B^+A$ is cosemisimple, then $A$ is automatically faithfully flat as a right $B$-module, see\cite[\textsection 3.3]{DOKSS} for a detailed discussion.

\subsection{Quantum Homogeneous Spaces, Ideals, and Quantum Subgroups}\label{app:QHSIdealsQSGs}

In the literature it is common to consider a weaker version of the quantum homogeneous space definition: coideal subalgebras $B \sseq A$ for which $A$ is faithfully flat as a right $B$-module, but for which $AB^+$ is not necessarily equal to $B^+A$. As shown in \cite[Theorem 1.11]{Masuoka}, these \emph{generalised quantum homogeneous spaces} are in bijective correspondence with two-sided coideals $I \subseteq A$ that are also right $A$-ideals and for which $A$ is faithfully coflat as a right $A/I$-comodule. Explicitly, the correspondence is given by associating a generalised quantum homogeneous space $B$ to the right ideal two-sided coideal $B^+A \sseq A$, and associating to a right ideal two-sided coideal $I$ the space of coinvariants associated to the coalgebra surjection $A \to A/I$. 

Let us now specialise to quantum homogeneous spaces. In this case the $B^+A$ will be a two-sided ideal and two-sided coideal. In fact, by Koppinen's lemma (see \cite[Lemma 1.4]{MulSch}) it will be a Hopf ideal, meaning that the quotient $A/I$ will be a Hopf algebra. To go in the other direction, we need the following technical lemma.

\begin{lem} \label{prop:Aplus} For any surjective Hopf map $\pi:A \to H$, the associated space of coinvariants $B = A^{\co(H)}$ satisfies $B^+A = AB^+$. 
\end{lem}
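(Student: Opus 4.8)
The plan is to prove the two inclusions $AB^+ \sseq B^+A$ and $B^+A \sseq AB^+$ separately, in each case by exhibiting an adjoint-type conjugation of $A$ on itself which stabilises the space of coinvariants $B = A^{\co(H)}$. Throughout I write $\delta := (\id \otimes \pi)\Delta$ for the right $H$-coaction, so that $B = \{a \in A \,|\, \delta(a) = a \otimes 1\}$, and I use freely that $\pi$ is a Hopf algebra map, in particular that $\pi \circ S = S_H \circ \pi$ and $\pi \circ S^{-1} = S_H^{-1} \circ \pi$.

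First I would establish the key stability claim for the first inclusion: for any $a \in A$ and any $b \in B$, the element $c := a_{(1)} b \, S(a_{(2)})$ again lies in $B$. To see this I would compute $\delta(c)$ using that $\delta$ is an algebra map, expand with Sweedler legs $a_{(1)} \otimes a_{(2)} \otimes a_{(3)} \otimes a_{(4)}$, and substitute the coinvariance relation $b_{(1)} \otimes \pi(b_{(2)}) = b \otimes 1$ into the two slots where the legs of $b$ appear (this is legitimate since the expression is linear in $b_{(1)} \otimes \pi(b_{(2)})$). The surviving $H$-component is then $\pi(a_{(2)})\pi(S(a_{(3)})) = \pi(a_{(2)} S(a_{(3)}))$, which telescopes to $\epsilon(\cdot)1$ by the antipode axiom, leaving $\delta(c) = c \otimes 1$. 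Since $\epsilon(c) = \epsilon(a)\epsilon(b)$, one moreover has $c \in B^+$ whenever $b \in B^+$. The inclusion $AB^+ \sseq B^+A$ then follows immediately from the rewriting
\[
ab = a_{(1)} b \, \epsilon(a_{(2)}) = a_{(1)} b \, S(a_{(2)}) \, a_{(3)} = \big(a_{(1)} b \, S(a_{(2)})\big) a_{(3)},
\]
since the bracketed factor lies in $B^+$.

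For the reverse inclusion I would run the mirror argument, now crucially using that the antipode is bijective. The relevant conjugation is $g := S^{-1}(a_{(2)}) \, b \, a_{(1)}$, and an entirely analogous Sweedler computation --- substituting $b_{(1)} \otimes \pi(b_{(2)}) = b \otimes 1$ and telescoping the leftover factor $\pi(S^{-1}(a_{(3)}) a_{(2)})$ via $S^{-1}(y_{(2)}) y_{(1)} = \epsilon(y)1$ --- shows $g \in B$, and $g \in B^+$ when $b \in B^+$. I would then rewrite, for $b \in B^+$,
\[
ba = a_{(3)} S^{-1}(a_{(2)}) \, b \, a_{(1)} = a_{(2)} \big(S^{-1}((a_{(1)})_{(2)}) \, b \, (a_{(1)})_{(1)}\big),
\]
where the first equality uses $a_{(3)} S^{-1}(a_{(2)}) = \epsilon(\cdot)1$ and the second is coassociativity; the bracketed factor lies in $B^+$, giving $ba \in AB^+$. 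Together the two inclusions yield $B^+A = AB^+$.

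The routine parts are the two Sweedler bookkeeping computations, and the one genuinely delicate point --- the main obstacle --- is getting the reverse inclusion right. The naive attempt to deduce it from the first inclusion by applying the antipode fails, because $S$ simultaneously reverses the product order and interchanges right-coinvariants with left-coinvariants, so it merely reproduces $AB^+ \sseq B^+A$. The correct device is the skew conjugation $S^{-1}(a_{(2)}) \, b \, a_{(1)}$, which stabilises the right-coinvariants $B$ directly while allowing $b$ to be extracted on the right; this is precisely where the bijectivity of the antipode, a standing assumption in the paper, is used.
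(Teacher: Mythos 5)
Your proof is correct and takes essentially the same approach as the paper: the first inclusion, established by checking coinvariance of $a_{(1)}bS(a_{(2)})$ and rewriting $ab = \big(a_{(1)}bS(a_{(2)})\big)a_{(3)}$, coincides exactly with the paper's argument. The paper dispatches the reverse inclusion with the phrase ``established analogously,'' and your skew conjugation $S^{-1}(a_{(2)})\,b\,a_{(1)}$ is precisely the right way to fill in that omitted step, correctly pinpointing that this is where the standing assumption of a bijective antipode is used.
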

\begin{proof}
For any $a \in A$ and $b \in B^+$,  it holds that
$
ab = a_{(1)}bS(a_{(2)})a_{(3)},
$
and that $\e(a_{(1)}bS(a_{(2)})) = 0$.  Moreover, we see that 
\begin{align*}
\Delta_{R,H}(a_{(1)}bS(a_{(2)})) = & \, a_{(1)}b_{(1)}S(a_{(4)}) \otimes \pi(a_{(2)})\pi(b_{(2)})\pi(S(a_{(3)})) \\
= & \, a_{(1)}bS(a_{(4)}) \otimes \pi(a_{(2)})\pi(S(a_{(3)}))\\
= & \, a_{(1)}bS(a_{(2)}) \otimes 1.
\end{align*}
From this we can conclude that
$
ab = a_{(1)}bS(a_{(2)})a_{(3)} \in B^+A,
$
implying the inclusion $AB^+ \sseq B^+A$. The opposite inclusion is established analogously, giving the required equality.
\end{proof}

Thus we have a bijection between the quantum homogeneous $A$-spaces and the set 
$$
\Big\{I \subseteq A ~|~ I \textrm{ a Hopf ideal such that $A$ is faithfully coflat as a right $A/I$-comodule} \Big\}\!.
$$

We finish with a second equivalent characterisation of quantum homogeneous spaces. For any Hopf algebra $A$, a \emph{quantum $A$-subgroup} is a pair $(H,\pi)$ consisting of a Hopf algebra $H$ and and a surjective Hopf algebra map $\pi:A \to H$, such that $A$ is faithfully coflat as a right $A/\ker(\pi)$-comodule. Define an equivalence relation on quantum $A$-subgroups by setting $(H,\pi)$ equivalent to $(H',\pi')$ if $\ker(\pi) = \ker(\pi')$. Note that two quantum subgroups $(H,\pi)$ and $(H',\pi')$ are equivalent if and only if there exists a Hopf algebra isomorphism $\phi: H \to H'$ such that $\phi \circ \pi = \pi'$. This gives a bijective correspondence between quantum homogeneous spaces and the set 
$$
\Big\{ [(H,\pi)] ~|~ (H,\pi) \textrm{ is a quantum $A$-subgroup} \Big\},
$$
where $[(H,\pi)]$ denotes the equivalence class of $(H,\pi)$.


\subsection{Simple Objects of the Category ${}^{A}_{B}\mathrm{Mod}_0$} \label{app:simplemod0}

Since relative line modules are a primary object of interest in this paper, we find it appropriate to make some general comments about simple objects in the category ${}^A_{B}\mathrm{Mod}_0$, 
where as usual $A$ is a Hopf algebra, and $B$ is a quantum homogeneous $A$-space 

Note first that since $ab = a_{(1)}bS(a_{(2)})a_{(3)}$, for all $a \in A$, $b \in B$, the Hopf algebra $A$ (endowed with its obvious right $B$-module structure) is an object in ${}^A_B\mathrm{Mod}_0$. Moreover, any coideal sub-$B$-bimodule of $A$ will also be an object in ${}^A_B\mathrm{Mod}_0$. 

Conversely, consider a simple object $\F \in {}^A_B\mathrm{Mod}_0$. Since $\Phi(\F)$ is a simple $\pi_P(A)$-comodule, it admits an embedding into $\pi_P(A)$, implying that $\F$ embeds into $A$. Thus we see that any simple object in ${}^A_B\mathrm{Mod}_0$ can be embedded as a sub-object of $A$. In particular, all relative line modules embed into $A$. Note that, with respect to an embedding $\F \hookrightarrow A$, it holds that $
\F \simeq A \square_{\pi_B} \pi_B(\F).
$

Consider now the subcategory ${}^A_B\mathrm{mod}_0 \subseteq {}^A_B\mathrm{Mod}_0$ of finite-dimensional left $H$-comodules, and note that it is a rigid monoidal category. From the monoidal form of Takeuchi's equivalence, we see that the corresponding subcategory ${}^A_B\mathrm{mod}_0 \subseteq {}^A_B\mathrm{Mod}_0$ is also a rigid monoidal category. Thus every $\F \in {}^{A}_B\mathrm{mod}_0$ admits a left and a right dual, implying that $\F$ is finitely generated and projective as a left and as a right $B$-module, and that ${}^A_B\mathrm{mod}_0$ can be described as the subcategory whose objects are finitely generated left $B$-modules. Moreover, any object in ${}^A_B\mathrm{Mod}_0$ which is a direct sum of objects in ${}^A_B\mathrm{mod}_0$ will also be projective as a left and as a right $B$-module. 

Finally, we consider the case where $\pi_B(A)$ is a cosemisimple Hopf algebra. The Peter--Weyl decomposition $\pi_B(A) \simeq \bigoplus_{\alpha \in \widehat{\pi_B}} \pi_B(A)_{\alpha}$ gives a corresponding decomposition of $A$ into simple sub-objects
\begin{align*}
A \simeq \bigoplus_{\alpha \in \widehat{\pi_B}} A_{\alpha},    
\end{align*}
where we have denoted 
$$
A_{\alpha} := \left\{a \in A \, |\, \pi_B(a) \in \pi_B(A)_{\alpha}\right\} = \{ a \in A \,|\, a_{(1)} \otimes \pi_B(a_{(2)}) \in A \otimes \pi_B(A)_{\alpha}\}.
$$
It is instructive to note that
$$
\Phi(A_{\alpha}) = \pi_B(A_{\alpha}) = \pi_B(A)_{\alpha}.
$$
Cosemisimlicity of $\pi_B(A)$ implies that $A$ is isomorphic to the direct sum of the simple objects in ${}^A_B\mathrm{mod}_0$. Moreover, since each simple object is projective as a left and as a right $B$-module, each $\F \in {}^{A}_B\mathrm{Mod}_0$ must be projective as a left and as a right $B$-module. Thus we have recovered some well-known results about relative Hopf modules (see for example \cite[Corollary 2.4]{DGH01} and \cite[Corollary 1.5]{MulSch}) from the monoidal form of Takeuchi's equivalence.

\subsection{Connections and Projectivity} \label{subsection:PrinConns}

Principal $\ell$-maps are the link between principal comodule algebras and noncommutative geometry. In particular, principal $\ell$-maps are in bijective correspondence with principal connections $\Pi:\Omega^1_u(P) \to \Omega^1_u(P)$, where 
$$
\Omega^1_u(P) = \ker(m:P \otimes P \to P)
$$ 
is the universal first-order differential calculus 
over $P$. Principal connections in turn provide a systematic procedure for constructing connections $\nabla:\F \to \Omega^1_u(P) \otimes_P \F$, where $\F$ is an \emph{associated module}, which is to say a module of the form $ P \square_H V_{\F}$, for some left $H$-comodule $V_{\F}$. See the recent monograph \cite[Chapter 5]{BeggsMajid:Leabh}, or the associated papers \cite{CDOBBW,BWGrass}, for more details.

It now follows from the Cuntz--Quillen theorem \cite[\textsection 8.3]{Varilly}  that any associated module is projective as a left $B$-module. In particular, since any quantum homogeneous space is a principal comodule algebra, every relative Hopf module is projective as a left $B$-module.


\bibliographystyle{siam}

\end{document}